\newtheorem{lemma}{Lemma}[section]
\newtheorem{theorem}[lemma]{Theorem}
\newtheorem{remark}[lemma]{Remark}
\newtheorem{coro}[lemma]{Corollary}
\newtheorem{definition}[lemma]{Definition}
\newtheorem{example}[lemma]{Example}
\title[Bohr/Levitan Almost Periodic and Almost Automorphic
Solutions of \ldots ]{Bohr/Levitan Almost Periodic and Almost
Automorphic Solutions of Linear Stochastic Differential Equations
without Favard's Separation Condition.}
\author{David ~Cheban}
\address[D. Cheban]{%
State University of Moldova\\ Faculty of Mathematics and
Informatics\\ Department of Mathematics  \\ A. Mateevich Street
60\\ MD--2009 Chi\c{s}in\u{a}u, Moldova} \email[D.
Cheban]{cheban@usm.md, davidcheban@yahoo.com}
\date{\today}
\subjclass{34C27, 35B15, 37B55, 60H10, 60H15.}
\keywords{Bohr/Levitan almost periodic solutions, almost
automorphic solutions, linear stochastic differential equations,
Favard theory.}
\begin{document}

\begin{abstract}
We prove that the linear stochastic equation
$dx(t)=(A(t)x(t)+f(t))dt+g(t)dW(t)$ with linear operator $A(t)$
generating a continuous linear cocycle $\varphi$ and Bohr/Levitan
almost periodic or almost automorphic coefficients
$(A(t),f(t),g(t))$ admits a unique Bohr/Levitan almost periodic
(respectively, almost automorphic) solution in distribution sense
if it has at least one precompact solution on $\mathbb R_{+}$ and
the linear cocycle $\varphi$ is asymptotically stable.
\end{abstract}

\maketitle

Dedicated to the memory of Professor \textbf{V. V. Zhikov}.

\section{Introduction}

This paper is dedicated to the study of linear stochastic
differential equations with Bohr/Levitan almost periodic and
almost automorphic coefficients. This field is called Favard's
theory \cite{Lev-Zhi,Zh-Le}, due to the fundamental contributions
made by J. Favard \cite{Fav27}. In 1927, J. Favard published his
celebrated paper, where he studied the problem of existence of
almost periodic solutions of equation in $\mathbb{R}$ of the
following form:
\begin{equation}
\label{eq0.1}x^{\prime}=A(t)x+f(t)
\end{equation}
with the matrix $A(t)$ and vector-function $f(t)$ almost periodic
in the sense of Bohr (see, for example, \cite{Fin,Lev-Zhi}).

Along with equation (\ref{eq0.1}), consider the homogeneous
equation
\begin{equation}
x^{\prime}=A(t)x \nonumber
\end{equation}
and the corresponding family of limiting equations
\begin{equation}
x^{\prime}=B(t)x, \label{eq0.3}%
\end{equation}
where $B\in H(A),$ and $H(A)$ denotes the hull of almost periodic
matrix $A(t)$ which is composed by those functions $B(t)$ obtained
as uniform limits
on $\mathbb{R}$ of the type $B(t):=\lim\limits_{n\rightarrow\infty}%
A(t+t_{n}),$ where $\{t_{n}\}$ is some sequence in $\mathbb{R}$.

\begin{theorem}
\label{tF} (Favard's theorem \cite{Fav27}) The linear differential
equation (\ref{eq0.1}) with Bohr almost periodic coefficients
admits at least one Bohr almost periodic solution if it has a
bounded solution, and each bounded solution $\varphi(t)$ of every
limiting equation (\ref{eq0.3}) ($B\in H(A)$) is separated from
zero, i.e.
\begin{equation}\label{eqF}
\inf\limits_{t\in\mathbb{R}}|\varphi(t)|>0.
\end{equation}
\end{theorem}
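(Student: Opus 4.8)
The plan is to realize the desired almost periodic solution as the unique minimal-norm bounded solution and then transport minimality across the hull. First I would fix the Euclidean norm $|\cdot|$ and let $\mathcal{B}$ be the set of solutions of (\ref{eq0.1}) bounded on $\mathbb{R}$, which is nonempty by hypothesis; set $\mu:=\inf\{\sup_{t\in\mathbb{R}}|\varphi(t)| : \varphi\in\mathcal{B}\}$. Since every $\varphi\in\mathcal{B}$ satisfies $|\varphi'(t)|\le (\sup_t\|A(t)\|)(\sup_t|\varphi(t)|)+\sup_t|f(t)|$, a minimizing sequence is uniformly bounded and equicontinuous, so by Arzel\`a--Ascoli together with a diagonal argument over the intervals $[-k,k]$ it has a subsequence converging uniformly on compacta to some $\varphi_\ast\in\mathcal{B}$ with $\sup_t|\varphi_\ast(t)|=\mu$; thus the infimum is attained.

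The decisive step is uniqueness of the minimizer, and this is precisely where Favard's separation condition (\ref{eqF}) enters. Suppose $\varphi_1,\varphi_2\in\mathcal{B}$ both attain $\mu$. Their difference $h:=\varphi_1-\varphi_2$ is a bounded solution of the homogeneous equation $x'=A(t)x$ (the case $B=A\in H(A)$), and if $h\not\equiv 0$ then (\ref{eqF}) yields $\delta:=\inf_t|h(t)|>0$. Applying the parallelogram identity to the bounded solution $\tfrac12(\varphi_1+\varphi_2)\in\mathcal{B}$ gives
\[
\Bigl|\tfrac{\varphi_1(t)+\varphi_2(t)}{2}\Bigr|^2=\tfrac12|\varphi_1(t)|^2+\tfrac12|\varphi_2(t)|^2-\tfrac14|h(t)|^2\le \mu^2-\tfrac{\delta^2}{4},
\]
so $\tfrac12(\varphi_1+\varphi_2)$ is a bounded solution of strictly smaller norm than $\mu$, a contradiction. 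Hence $h\equiv 0$ and $\varphi_\ast$ is the \emph{unique} minimal solution. Note that this argument forces the Euclidean (Hilbert) norm: the parallelogram identity is what converts separation into a genuine drop in norm.

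Next I would run the same construction for every limiting equation (\ref{eq0.3}). For $B\in H(A)$ let $g$ be the matching limit of the translates of $f$; translating $\varphi_\ast$ along a defining sequence $\{t_n\}$ and extracting a convergent subsequence shows the $B$-equation has a bounded solution of norm $\le\mu$, so its minimal value $\mu_B$ is attained and, by the argument above applied under the separation hypothesis for all of $H(A)$, is attained by a unique $\varphi_B$. Minimality of the hull of an almost periodic function lets me transport bounded solutions in both directions between any $B,C\in H(A)$, giving $\mu_B\le\mu_C$ and $\mu_C\le\mu_B$; hence $\mu_B\equiv\mu$ is constant on $H(A)$.

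Finally I would deduce almost periodicity from uniqueness. I claim the map $\Phi:H(A)\to\mathbb{R}^n$, $\Phi(B):=\varphi_B(0)$, is continuous: if $B_n\to B$ then the $\varphi_{B_n}$ are uniformly bounded by $\mu$ and equicontinuous, so every subsequence has a sub-subsequence converging uniformly on compacta to a bounded solution of the $B$-equation of norm $\le\mu$, which by uniqueness equals $\varphi_B$, forcing $\varphi_{B_n}(0)\to\varphi_B(0)$. Since uniqueness also gives $\varphi_\ast(\tau)=\varphi_{A_\tau}(0)=\Phi(A_\tau)$, where $A_\tau:=A(\cdot+\tau)\in H(A)$ is the shift, and $\tau\mapsto A_\tau$ is a Bohr almost periodic motion in the compact hull on which the continuous $\Phi$ is uniformly continuous, the composition of $\Phi$ with $\tau\mapsto A_\tau$ is Bohr almost periodic; that is, $\varphi_\ast$ is almost periodic. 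The main obstacle is the uniqueness step: it is the only place where condition (\ref{eqF}) is genuinely used, it dictates the choice of the Euclidean norm, and constancy of $\mu$ on the hull and the continuity of $\Phi$ both rest on it.
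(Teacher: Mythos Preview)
The paper does not give its own proof of this theorem: it is quoted in the introduction as Favard's classical 1927 result and serves only as motivation for the paper's stochastic analogues. Your argument is essentially the standard Favard proof (minimal-norm bounded solution, uniqueness via the parallelogram law under the separation hypothesis, then transport of the unique minimizer across the hull), and it is correct in outline.

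One point to tighten: you define $\Phi:H(A)\to\mathbb{R}^n$ by $\Phi(B):=\varphi_B(0)$, but $\varphi_B$ depends on the pair $(B,g)$, not on $B$ alone. In general the projection $H(A,f)\to H(A)$ need not be injective (different limits of $f^{t_n}$ may sit over the same $B$), so $\Phi$ is not well defined on $H(A)$. The fix is to work on the joint hull $H(A,f)$: set $\Phi(B,g):=\varphi_{(B,g)}(0)$, note that $\tau\mapsto (A_\tau,f_\tau)$ is Bohr almost periodic in $C(\mathbb R,[E])\times C(\mathbb R,E)$ with compact minimal hull $H(A,f)$, and then your continuity and uniform-continuity arguments go through verbatim to give $\varphi_\ast(\tau)=\Phi(A_\tau,f_\tau)$ almost periodic. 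With that adjustment the proof is complete.
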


This result was generalized infinite-dimensional equation in the
works of V. V. Zhikov and B. M. Levitan \cite{Zh-Le} (see also B.
M. Levitan and V. V. Zhikov \cite[ChVIII]{Lev-Zhi}).

Favard's theorem for linear differential equation with Levitan
almost periodic (respectively, almost automorphic) coefficients
was established by B. M. Levitan \cite[ChIV]{Lev_1953}
(respectively, by Lin F. \cite{Lin_1987}).

For linear stochastic differential equation Favard's theorem was
established by Liu Z. and Wang W. in \cite{LW_2015}.

In the work \cite{CC_2009} it was proved that Favard's theorem
remains true if we replace condition (\ref{eqF}) by the following:
\begin{equation}\label{eqCC}
\inf\limits_{t\to +\infty}|\varphi(t)|=0.
\end{equation}

In this paper we establish that Favard's theorem remains true for
linear stochastic differential equations under the condition
(\ref{eqCC}).

This paper is organized as follows.

In Section 2 we collect some well known facts from the theory of
dynamical systems (both autonomous and non-autonomous). Namely,
the notions of almost periodic (both in the Bohr and Levitan
sense), almost automorphic and recurrent motions; cocycle,
skew-product dynamical system, and general non-autonomous
dynamical system, comparability of motions by character of
recurrence in the sense of Shcherbakov
\cite{Sch72}-\cite{scher85}.

Section 3 is dedicated to the proof of classical Birghoff's
theorem (about existence of compact minimal set) for
non-autonomous dynamical systems.

In Section 4 we study the problem of strongly comparability of
motions by character of recurrence of semi-group non-autonomous
dynamical systems. The main result is contained in Theorem
\ref{t5.2} and it generalizes the known Shcherbakov's result
\cite{Sch72,scher85}.

Section 5 is dedicated to the shift dynamical systems and
different classes of Poisson stable functions. In particular:
quasi-periodic, Bohr almost periodic, almost automorphic functions
and many others.

In Section 6 we collect some results and constructions related
with Linear (homogeneous and nonhomogeneous) Differential Systems.
We also discusses here the relation between two definitions of
hyperbolicity (exponential dichotomy) for linear non-autonomous
systems. The main result of this section (Theorem \ref{thDP1})
establish the equivalence of two definitions for
finite-dimensional and for some classes of infinite-dimensional
systems.

Section 7 is dedicated to the study of Bohr/Levitan almost
periodic and almost automorphic solutions of Linear Stochastic
Differential Equations. The main results (Theorems \ref{thS1},
\ref{thS2} and Corollaries \ref{cor_1}, \ref{cor_11}) show that
classical Favard's theorem remains true (under some conditions)
for linear stochastic differential equations.


\section{Cocycles, Skew-Product Dynamical Systems and Non-Autonomous
Dynamical Systems}

Let $X$ be a complete metric space, $\mathbb{R}$ $(\mathbb{Z})$ be
a group of real (integer) numbers, $\mathbb{R_{+}}$
$(\mathbb{Z_{+}})$ be a semi-group of nonnegative real (integer)
numbers, $\mathbb T$ be one of the two sets $\mathbb{R}$ or
$\mathbb{Z}$ and $\mathbb{S}\subseteq\mathbb{T}$
$(\mathbb{T_{+}}\subseteq\mathbb{S}$) be a sub-semigroup of the
additive group $\mathbb{T}$, where $\mathbb T_{+}:=\{t\in\mathbb
T:\ t\ge 0\}$.

Let $(X,\mathbb S,\pi)$ be a dynamical system.

\begin{definition}\label{def1.1}
Let $(X,\mathbb{T}_1,\pi)$ and $(Y,\mathbb{T}_2,\sigma)$ \
($\mathbb{T_{+}}\subseteq\mathbb{T}_1\subseteq\mathbb{T}_2
\subseteq\mathbb{T}$) be two dynamical systems. A mapping $h:X\to
Y$ is called a homomorphism (isomorphism, respectively) of the
dynamical system $(X,\mathbb{T}_1,\pi)$ onto
$(Y,\mathbb{T}_2,\sigma)$, if the mapping $h$ is continuous
(homeomorphic, respectively) and $h(\pi(x,t))=\sigma(h(x),t)$ (
$t\in\mathbb{T}_1,\ x\in X$). In this case the dynamical system
$(X,\mathbb{T}_1,\pi)$ is an extension of the dynamical system
$(Y,\mathbb{T}_2,\sigma)$ by the homomorphism $h$, but the
dynamical system $(Y,\mathbb{T}_2,\sigma)$ is called a factor of
the dynamical system\index{factor of dynamical system}
$(X,\mathbb{T}_1,\pi)$ by the homomorphism $h$. The dynamical
system $(Y,\mathbb{T}_2,\sigma)$ is called also a base of the
extension\index{base of extension} $(X,\mathbb{T}_1,\pi)$.
\end{definition}

\begin{definition}\label{def1.2}
A triplet $\langle (X,\mathbb{T}_1,\pi),\,(Y,\mathbb{T}_2,\sigma),
\,h\rangle $, where $h$ is a homomorphism from
$(X,\mathbb{T}_1,\pi)$ onto $(Y,\mathbb{T}_2,\sigma),$ is called a
non-autonomous dynamical system (NDS).
\end{definition}

\begin{definition}\label{def1.3}
A triplet $\langle W, \varphi, (Y,\mathbb{T}_2,\sigma)\rangle $ (or
shortly $\varphi$), where $(Y,\mathbb{T}_2,\sigma)$ is a dynamical
system on $Y$, $W$ is a complete metric space and $\varphi$ is a
continuous mapping from $\mathbb{T}_1\times W\times Y$ to $W$,
satisfying the following conditions:
\begin{enumerate}
\item[a.]
$\varphi(0,u,y)=u$ $(u\in W, y\in Y)$;
\item[b.]
$\varphi(t+\tau,u,y)= \varphi(\tau,\varphi(t,u,y),\sigma(t,y))$
$(t,\tau\in\mathbb{T}_1,\, u\in W, y\in Y),$
\end{enumerate}
is called \cite{Sel_71} a cocycle on $(Y,\mathbb{T}_2,\sigma)$
with the fiber $W$.
\end{definition}

\begin{definition}\label{def1.4}
Let $X:= W\times Y$ and define a mapping $\pi: X\times
\mathbb{T}_1\to X$ as following:
$\pi((u,y),t):=(\varphi(t,u,y),\sigma(t,y))$ (i.e.,
$\pi=(\varphi,\sigma)$). Then it is easy to see that
$(X,\mathbb{T}_1,\pi)$ is a dynamical system on $X$ which is
called a skew-product dynamical system \cite{Sel_71} and
$h=pr_2:X\to Y$ is a homomorphism from $(X,\mathbb{T}_1,\pi)$ onto
$(Y,\mathbb{T}_2,\sigma)$ and, consequently, $\langle
(X,\mathbb{T}_1,\pi),\, (Y,\mathbb{T}_2,\sigma), h\rangle $ is a
non-autonomous dynamical system.
\end{definition}

Thus, if we have a cocycle $\langle W, \varphi, (Y,\mathbb{T}_2,
\sigma)\rangle $ on the dynamical system $(Y,\mathbb{T}_2,\sigma)$
with the fiber $W$, then it generates a non-autonomous dynamical
system $\langle (X,\mathbb{T}_1,\pi),$\ $(Y,\mathbb{T}_2,\sigma),
h\rangle $ ($X:= W\times Y$) called a non-autonomous dynamical
system generated by the cocycle\index{non-autonomous dynamical
system generated by cocycle} $\langle W, \varphi,
(Y,\mathbb{T}_2,\sigma)\rangle $ on $(Y,\mathbb{T}_2,\sigma)$.

Non-autonomous dynamical systems (cocycles) play a very important
role in the study of non-autonomous evolutionary differential
equations. Under appropriate assumptions every non-autonomous
differential equation generates a cocycle (a non-autonomous
dynamical system). Below we give some examples of theses.

\begin{example}\label{ex3.1.5}
{\rm Let $E$ be a real or complex Banach space and $Y$ be a metric
space. Denote by $C(Y\times E,E)$ the space of all continuous
mappings $f:Y \times E\mapsto E$ endowed by compact-open topology.
Consider the system of differential equations
\begin{equation}\label{eq3.1.9}
\left\{\begin{array}{ll}u'=&F(y,u)\\
                        y '=&G(y) ,
\end{array}
\right.
\end{equation}
where $Y\subseteq E, G\in C(Y,E)$ and $F\in C(Y\times E,E)$.
Suppose that for the system (\ref{eq3.1.9}) the conditions of the
existence, uniqueness, continuous dependence of initial data and
extendability on $\mathbb{R}_{+}$ are fulfilled. Denote by
$(Y,\mathbb{R}_{+},\sigma)$ a dynamical system on $Y$ generated by
the second equation of the system (\ref{eq3.1.9}) and by
$\varphi(t,u,y)$ -- the solution of equation
\begin{equation}\label{eq3.1.10}
u'=F(y t,u) \ (y t:=\sigma(t,y))
\end{equation}
passing through the point $u\in E$ for $t=0$. Then the mapping
$\varphi:\mathbb{R}_{+}\times E\times Y\to E$ is continuous and
satisfies the conditions: $\varphi(0,u,y)=u$ and
$\varphi(t+\tau,u,y)= \varphi(t,\varphi(\tau,u,y),y t)$ for all
$t,\tau \in \mathbb R_{+}$, $u\in E$ and $y\in Y$ and,
consequently, the system (\ref{eq3.1.9}) generates a
non-autonomous dynamical system $\langle
(X,\mathbb{R}_{+},\pi),(Y,\mathbb{R}_{+},\sigma),h\rangle $ (where
$X:=E\times Y$, $\pi:=(\varphi,\sigma)$ and $h:=pr_2:X\to Y$).

We will give some generalization of the system (\ref{eq3.1.9}).
Namely, let $(Y,\mathbb{R}_{+},\sigma)$ be a dynamical system on
the metric space $Y$. Consider the system
\begin{equation}\label{eq3.1.11}
\left \{\begin{array}{ll}u'=&F(y t,u)\\
                       y\in & Y ,
\end{array}
\right.
\end{equation}
where $F\in C(Y\times E,E)$. Suppose that for the equation
(\ref{eq3.1.10}) the conditions of the existence, uniqueness and
extendability on $\mathbb{R}_{+}$ are fulfilled. The system
$\langle (X,\mathbb{R}_{+},\pi),(Y,$
$\mathbb{R}_{+},\sigma),h\rangle $, where $X:=E\times Y$,
$\pi:=(\varphi,\sigma)$, $\varphi(\cdot,u,y)$ is the solution of
(\ref{eq3.1.10}) and $h:=pr_2:X\to Y$ is a non-autonomous
dynamical system generated by the equation (\ref{eq3.1.11}).}
\end{example}

\begin{example}\label{ex1.1}
{\rm Let us consider a differential equation
\begin{equation}\label{eq1.0.6}
u'=f(t,u),
\end{equation}
where $f\in C(\mathbb{R}\times E,E)$. Along with equation
$(\ref{eq1.0.6})$ we consider its $H$-class
\cite{Bro79},\cite{Lev-Zhi}, \cite{Sel_71}, \cite{scher85}, i.e.,
the family of equations
\begin{equation}
v'=g(t,v),\label{eq1.0.7}
\end{equation}
where $g\in H(f):=\overline{\{f^{\tau}:\tau\in \mathbb{R}\}}$,
$f^{\tau}(t,u):=f(t+\tau,u)$ for all $(t,u)\in \mathbb{R}\times E$
and by bar we denote the closure in $C(\mathbb{R}\times E,E)$. We
will suppose also that the function $f$ is regular, i.e. for every
equation (\ref{eq1.0.7}) the conditions of the existence,
uniqueness and extendability on $\mathbb{R}_{+}$ are fulfilled.
Denote by $\varphi(\cdot,v,g)$ the solution of equation
(\ref{eq1.0.7}) passing through the point $v\in E$ at the initial
moment $t=0$. Then there is a correctly defined mapping
$\varphi:\mathbb{R}_{+}\times E\times H(f)\to E$ satisfying the
following conditions (see, for example, \cite{Bro79},
\cite{Sel_71}):
\begin{enumerate}
\item[$1)$] $\varphi(0,v,g)=v$ for all $v\in E$ and $g\in H(f)$;
\item[$2)$]
$\varphi(t,\varphi(\tau,v,g),g^{\tau})=\varphi(t+\tau,v,g)$ for
every $ v\in E$, $g\in H(f)$ and $t,\tau \in \mathbb{R}_{+}$;
\item[$3)$] the mapping $\varphi:\mathbb{R}_{+}\times E\times
H(f)\to E$ is continuous.
\end{enumerate}

Denote by $Y:=H(f)$ and $(Y,\mathbb{R}_{+},\sigma)$ a dynamical
system of translations (a semigroup system) on $Y$, induced by the
dynamical system of translations $(C(\mathbb{R}\times
E,E),\mathbb{R},\sigma)$. The triplet $\langle E,\varphi,
(Y,\mathbb{R}_{+},\sigma)\rangle $ is a cocycle on
$(Y,\mathbb{R}_{+},\sigma)$ with the fiber $E$. Thus, equation
(\ref{eq1.0.6}) generates a cocycle $\langle E,\varphi,
(Y,\mathbb{R}_{+},\sigma)\rangle $ and a non-autonomous dynamical
system $\langle (X,\mathbb{R}_{+},\pi),\, (Y,\mathbb{R}_{+},\sigma),
h\rangle $, where $X:= E\times Y$, $\pi:=(\varphi,\sigma)$ and
$h:=pr_2:X\to Y$.}
\end{example}

\begin{remark}
Let $Y:=H(f)$ and $(Y,\mathbb R,\pi)$ be the shift dynamical
system on $Y$. The equation (\ref{eq1.0.6}) (the family of
equation (\ref{eq1.0.7})) may be written in the form
(\ref{eq3.1.10}), where $F:Y\times E\mapsto E$ is defined by
equality $F(g,u):=g(0,u)$ for all $g\in H(f)=Y$ and $u\in E,$ then
$F(g^{t},u)=g(t,u)$ ($g^{t}(s,u):=\sigma(t,g)(s,u)=g(t+s,u)$ for
all $t,s\in \mathbb R$ and $u\in E$).
\end{remark}

\subsection{Recurrent, Almost Periodic and Almost Automorphic Motions.}

Let $(X,\mathbb S,\pi)$ be a dynamical system.

\begin{definition}\label{def4.2}
A number $\tau \in \mathbb S$ is called an $\varepsilon >0$ shift
of $x$ (respectively, almost period of $x$), if $\rho
(x\tau,x)<\varepsilon $ (respectively, $\rho (x(\tau
+t),xt)<\varepsilon$ for all $t\in \mathbb S$).
\end{definition}

\begin{definition}\label{def14.3}
A point $x \in X $ is called almost recurrent (respectively, Bohr
almost periodic), if for any $\varepsilon >0$ there exists a
positive number $l$ such that at any segment of length $l$ there is
an $\varepsilon$ shift (respectively, almost period) of point $x\in
X$.
\end{definition}

\begin{definition}\label{def4.4}
If the point $x\in X$ is almost recurrent and the set
$H(x):=\overline{\{xt\ \vert \ t\in \mathbb S\}}$ is compact, then
$x$ is called recurrent.
\end{definition}

Denote by $\mathfrak N_{x}:=\{\{t_n\}:\ \{t_n\}\subset \mathbb S \
\mbox{such that}\ \{\pi(t_n,x)\}\to x\ \mbox{as}\ n\to \infty\}$.

\begin{definition}
A point $x\in X$ of the dynamical system $(X,\mathbb S, \pi)$ is
called Levitan almost periodic \cite{Lev-Zhi}, if there exists a
dynamical system $(Y,\mathbb S,\sigma)$ and a Bohr almost periodic
point $y\in Y$ such that $\mathfrak N_{y}\subseteq \mathfrak
N_{x}.$
\end{definition}

\begin{definition} A point $x\in X$ is called stable in the sense
of Lagrange $(st.$L$)$, if its trajectory
$\Sigma_{x}:=\Phi\{\pi(t,x)\ :\ t\in \mathbb S\}$ is relatively
compact.
\end{definition}

\begin{definition} A point $x\in X$ is called almost automorphic
in the dynamical system $(X,\mathbb S,\pi),$ if the following
conditions hold: \begin{enumerate}\item $x$ is st.$L$; \item the
point $x\in X$ is Levitan almost periodic.
\end{enumerate}
\end{definition}

\begin{lemma}\label{l4.5*}\cite{CM04}
Let $(X,\mathbb S,\pi)$ and $(Y,\mathbb S,\sigma)$ be two
dynamical systems, $x\in X$ and the following conditions be
fulfilled:
\begin{enumerate} \item a point $y\in Y$ is Levitan almost periodic;
\item $\mathfrak N_y\subseteq \mathfrak N_x$.
\end{enumerate}
Then the point $x$ is Levitan almost periodic, too.
\end{lemma}

\begin{coro}\label{cor4.6*} Let $x\in X$ be a st.$L$ point, $y\in
Y$ be an almost automorphic point and $\mathfrak N_y\subseteq
\mathfrak N_x$. Then the point $x$ is almost automorphic too.
\end{coro}
\begin{proof}
Let $y$ be an almost automorphic point, then by Lemma \ref{l4.5*}
the point $x\in X$ is Levitan almost periodic. Since $x$ is
st.$L$, then it is almost automorphic.
\end{proof}

\begin{remark}\label{r4.1*}
We note (see, for example, \cite{Lev-Zhi} and \cite{scher85}) that
if $y \in Y $ is a stationary ($\tau$-periodic, almost periodic,
quasi periodic, recurrent) point of the dynamical system $(Y
,\mathbb T_{2} ,\sigma)$ and $h:Y \to X $ is a homomorphism of the
dynamical system $(Y,\mathbb T _{2},\sigma)$ onto $(X,\mathbb
T_{1},\pi)$, then the point $x=h(y)$ is a stationary
($\tau$-periodic, almost periodic, quasi periodic, recurrent)
point of the system $(X,\mathbb T_{1},\pi)$.
\end{remark}

\begin{definition}\label{defPC1} A point $x_0\in X$ is called
\cite{scher85,sib}
\begin{enumerate}
\item[-] pseudo recurrent if for any $\varepsilon >0$, $t_0\in\mathbb
T$ and $p\in \Sigma_{x_0}$ there exist numbers
$L=L(\varepsilon,t_0)>0$ and $\tau =\tau(\varepsilon,t_0,p)\in
[t_0,t_0+L]$ such that $\tau \in \mathfrak T (p,\varepsilon))$;
\item[-] pseudo periodic (or uniformly Poisson stable) if for any $\varepsilon >0$, $t_0\in\mathbb
T$ there exists a number  $\tau =\tau(\varepsilon,t_0)>t_0$ such
that $\tau \in \mathfrak T (p,\varepsilon))$ for any $p\in
\Sigma_{x_0}$.
\end{enumerate}
\end{definition}

\begin{remark}\label{remPR_1} 1. Every pseudo periodic point is
pseudo recurrent.

2. If $x\in X$ is pseudo recurrent, then
\begin{enumerate}
\item[-] it is Poisson stable; \item[-] every point $p\in H(x)$ is
pseudo recurrent; \item[-] there exist pseudo recurrent points for
which the set $H(x_0)$ is compact but not minimal
\cite[ChV]{Sch72}; \item[-] there exist pseudo recurrent points
which are not almost automorphic (respectively, pseudo periodic)
\cite[ChV]{Sch72}.
\end{enumerate}
\end{remark}

\subsection{Comparability of Motions by the Character
of Recurrence}

In this subsection following B. A. Shcherbakov
\cite{Sch75,scher85} (see also \cite{Che_1977},
\cite[ChI]{Che_2009}) we introduce the notion of comparability of
motions of dynamical system by the character of their recurrence.
While studying stable in the sense of Poisson motions this notion
plays the very important role (see, for example,
\cite{Sch72,scher85}).

Let $(X,\mathbb S,\pi)$ and $(Y,\mathbb S,\sigma)$ be dynamical
systems, $x\in X$ and $y\in Y$. Denote by
$\Sigma_{x}:=\{\pi(t,x):\ t\in\mathbb S\}$ and $\mathfrak
M_{x}:=\{\{t_n\}:$ such that $\{\pi(t_n,x)\}$ converges as $n\to
\infty \}$.

\begin{definition}
A point $x_0\in X$ is called
\begin{enumerate}
\item[a.] comparable by the character of recurrence with $y_0\in Y$ if
there exists a continuous mapping $h:\Sigma_{y_0}\mapsto
\Sigma_{x_0}$ satisfying the condition
\begin{equation}\label{eqC1_1}
h(\sigma(t,y_0))=\pi(t,x_0)\ \ \mbox{for any}\ t\in \mathbb R ;
\end{equation}
\item[b.] strongly comparable by the character of recurrence with $y_0\in Y$ if
there exists a continuous mapping $h:H(y_0)\mapsto H(x_0)$
satisfying the condition
\begin{equation}\label{eqC2_1}
h(y_0)=x_0 \ \mbox{and}\ h(\sigma(t,y))=\pi(t,h(x))\ \mbox{for
any}\ y\in H(x_0) \ \mbox{and}\ t\in \mathbb R ;
\end{equation}
\item[c.] uniformly comparable by the character of recurrence with
$y_0\in Y$ if there exists a uniformly continuous mapping
$h:\Sigma_{y_0}\mapsto \Sigma_{x_0}$ satisfying condition
(\ref{eqC1_1}).
\end{enumerate}
\end{definition}

\begin{theorem}\label{thC1} Let $x_0\in X$ be uniformly
comparable by the character of recurrence with $y_0\in Y$. If the
spaces $X$ and $Y$ are complete, then $x_0$ is strongly comparable
by the character of recurrence with $y_0\in Y$.
\end{theorem}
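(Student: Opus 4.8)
The plan is to invoke the classical extension theorem for uniformly continuous maps: a uniformly continuous map from a metric space into a complete metric space extends uniquely to the closure of its domain. Here the domain is the orbit $\Sigma_{y_0}$, whose closure is precisely the hull $H(y_0)$, and the natural target is $H(x_0)=\overline{\Sigma_{x_0}}$. Since $X$ is complete and $H(x_0)$ is a closed subset of $X$, the space $H(x_0)$ is itself complete. By the hypothesis of uniform comparability there is a uniformly continuous map $h:\Sigma_{y_0}\to\Sigma_{x_0}\subseteq H(x_0)$ with $h(\sigma(t,y_0))=\pi(t,x_0)$ for all $t\in\mathbb{R}$. First I would extend $h$ to a (uniformly) continuous map $\tilde h:H(y_0)\to H(x_0)$; the image indeed lands in $H(x_0)$, since $\tilde h(H(y_0))\subseteq\overline{h(\Sigma_{y_0})}=\overline{\Sigma_{x_0}}=H(x_0)$, and the extension is unique by density.

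Next I would verify the two defining properties of strong comparability. The base-point condition is immediate: taking $t=0$ gives $h(y_0)=h(\sigma(0,y_0))=\pi(0,x_0)=x_0$, and since $\tilde h$ agrees with $h$ on $\Sigma_{y_0}$ we obtain $\tilde h(y_0)=x_0$. For the equivariance I would first check it on the dense set $\Sigma_{y_0}$: writing $y=\sigma(s,y_0)$ and using the flow identities together with the intertwining property of $h$,
$$\tilde h(\sigma(t,y))=h(\sigma(t+s,y_0))=\pi(t+s,x_0)=\pi(t,\pi(s,x_0))=\pi(t,\tilde h(y)).$$
Then, fixing $t$, both maps $y\mapsto\tilde h(\sigma(t,y))$ and $y\mapsto\pi(t,\tilde h(y))$ are continuous on $H(y_0)$, where I use that $\sigma(t,\cdot)$ leaves $H(y_0)$ invariant and $\pi(t,\cdot)$ leaves $H(x_0)$ invariant; since these two continuous maps coincide on the dense subset $\Sigma_{y_0}$, they coincide on all of $H(y_0)$, which is exactly condition (\ref{eqC2_1}).

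The genuinely essential hypothesis, and hence the only real obstacle, is uniform continuity: mere continuity of $h$ on $\Sigma_{y_0}$ would not guarantee a continuous extension to the closure. Everything else is soft — completeness of $H(x_0)$ follows from completeness of $X$, invariance of the hulls under the phase maps follows from continuity of $\sigma$ and $\pi$ together with $\sigma(t,\Sigma_{y_0})\subseteq\Sigma_{y_0}$, and the passage of the equivariance identity from the dense orbit to its closure is the standard coincidence-of-continuous-maps argument. Thus the whole proof reduces to the extension theorem plus two routine density verifications.
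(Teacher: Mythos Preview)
Your proof is correct and follows essentially the same approach as the paper: extend the uniformly continuous map $h:\Sigma_{y_0}\to\Sigma_{x_0}$ to the closure $H(y_0)$ using completeness, then verify the equivariance identity. The only cosmetic difference is that the paper checks equivariance at an arbitrary $y\in H(y_0)$ by passing directly to a sequence $\sigma(t_n,y_0)\to y$ and computing the limit, whereas you first verify the identity on the dense orbit $\Sigma_{y_0}$ and then invoke the coincidence-of-continuous-maps principle; these are interchangeable formulations of the same argument.
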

\begin{proof} Let $h:\Sigma_{y_0}\mapsto \Sigma_{x_0}$ be a
uniformly continuous mapping satisfying condition (\ref{eqC1_1})
and the spaces $X$ and $Y$ be complete. Then $h$ admits a unique
continuous extension $h: H(y_0)\to H(x_0)$. Now we will show that
this map possesses property (\ref{eqC2_1}). Tho this end we note
that by condition $h$ satisfies equality (\ref{eqC1_1}). Let now
$y\in H(y_0)$ and $t\in\mathbb R$, then there exists a sequence
$\{t_n\}\subset \mathbb T$ such that $\sigma(t_n,y_0)\to y$ as
$n\to \infty$ and, consequently, $\sigma(t+t_n,y_0)\to
\sigma(t,y)$. Since the sequence $\{\sigma(t_n,y_0)\}$ is
convergent and the map $h:\Sigma_{y_0}\mapsto \Sigma_{x_0}$ is
uniformly continuous, satisfies (\ref{eqC1_1}) and the spaces $X$
and $Y$ are complete, then the sequence
$\{\pi(t_n,x_0)\}=\{h(\sigma(t_n,y_0))\}$ is also convergent.
Denote by $x:=\lim\limits_{n\to \infty}\pi(t_n,x_0)$. Then we have
\begin{eqnarray}\label{eqC4}
& h(\sigma(t,y))=\lim\limits_{n\to
\infty}h(\sigma(t_n+t,y_0))=\lim\limits_{n\to
\infty}\pi(t_n+t,x_0)= \nonumber  \\
& \lim\limits_{n\to
\infty}\pi(t,\pi(t_n,x_0))=\pi(t,x)=\pi(t,h(y)).   \nonumber
\end{eqnarray}
Theorem is proved.
\end{proof}

\begin{coro}\label{corC1} The uniform comparability implies
strong comparability (if the phase spaces are complete) and strong
comparability implies the (simple) comparability.
\end{coro}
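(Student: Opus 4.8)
The plan is to dispatch the two implications separately, each being a short consequence of what precedes. For the first implication---that uniform comparability implies strong comparability when the phase spaces $X$ and $Y$ are complete---there is nothing new to do, since this is exactly the assertion of Theorem \ref{thC1} established immediately above. I would simply invoke that theorem.

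For the second implication---that strong comparability implies (simple) comparability---I would argue by restriction. Suppose $x_0$ is strongly comparable with $y_0$, so there is a continuous map $h\colon H(y_0)\to H(x_0)$ with $h(y_0)=x_0$ and the equivariance relation $h(\sigma(t,y))=\pi(t,h(y))$ holding for all $y\in H(y_0)$ and $t\in\mathbb R$ (condition (\ref{eqC2_1})). Since $\Sigma_{y_0}\subseteq H(y_0)$, the restriction $h|_{\Sigma_{y_0}}$ is a well-defined continuous map, and I would take it as the candidate exhibiting (simple) comparability.

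It then remains to verify the two requirements of part (a) of the definition: that the restriction sends $\Sigma_{y_0}$ into $\Sigma_{x_0}$ and that it satisfies (\ref{eqC1_1}). Both follow at once by specializing the equivariance relation to $y=y_0$: using $h(y_0)=x_0$ one obtains $h(\sigma(t,y_0))=\pi(t,h(y_0))=\pi(t,x_0)$ for every $t\in\mathbb R$, which simultaneously shows that the image of $\Sigma_{y_0}$ lies in $\Sigma_{x_0}$ and that condition (\ref{eqC1_1}) holds. This is precisely the definition of (simple) comparability, and the proof is complete. I do not expect any genuine obstacle here; the only point meriting care is the well-definedness of the restriction, namely the inclusion $\Sigma_{y_0}\subseteq H(y_0)$ together with the observation that the equivariance forces the image to fall inside $\Sigma_{x_0}$ rather than merely inside the larger hull $H(x_0)$.
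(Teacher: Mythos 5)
Your proposal is correct and follows exactly the route the paper intends: the first implication is just a citation of Theorem \ref{thC1}, and the second is the immediate restriction argument from the definitions (the paper states the corollary without proof precisely because this is all there is to check). Your added care about why the restricted map lands in $\Sigma_{x_0}$ rather than merely $H(x_0)$ --- via $h(\sigma(t,y_0))=\pi(t,h(y_0))=\pi(t,x_0)$ --- is the right observation and fills in the only detail the paper leaves implicit.
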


\begin{theorem}\label{thC2}\cite{Che_1977},\cite[ChI]{Che_2009}
Let $X$ and $Y$ be two complete metric
spaces, then the following statement are equivalent:
\begin{enumerate}
\item the point $x_0$ is strongly comparable by the character of recurrence with $y_0\in
Y$;
\item $\mathfrak M_{y_0}\subseteq \mathfrak M_{x_0}$.
\end{enumerate}
\end{theorem}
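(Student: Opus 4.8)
The plan is to prove the two implications separately, with $(1)\Rightarrow(2)$ being essentially immediate from the defining properties of the comparison map and $(2)\Rightarrow(1)$ requiring an explicit construction of such a map. Throughout I write $H(x_0)=\overline{\Sigma_{x_0}}$ and recall that $\{t_n\}\in\mathfrak M_{y_0}$ means precisely that $\{\sigma(t_n,y_0)\}$ converges, while $\{t_n\}\in\mathfrak M_{x_0}$ means that $\{\pi(t_n,x_0)\}$ converges; any such limit is a limit of points of $\Sigma_{x_0}$ and hence lies in the closed hull $H(x_0)$, and similarly for $Y$ (completeness of $X$ and $Y$ being the standing assumption, denote their metrics by $d_X$ and $d_Y$).

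For $(1)\Rightarrow(2)$, suppose there is a continuous $h:H(y_0)\to H(x_0)$ with $h(y_0)=x_0$ and $h(\sigma(t,y))=\pi(t,h(y))$ for all $y\in H(y_0)$ and $t\in\mathbb R$. Let $\{t_n\}\in\mathfrak M_{y_0}$, so $\sigma(t_n,y_0)\to y^{*}$ for some $y^{*}\in H(y_0)$. Applying the equivariance with $y=y_0$ I obtain $\pi(t_n,x_0)=\pi(t_n,h(y_0))=h(\sigma(t_n,y_0))$, and continuity of $h$ then gives $\pi(t_n,x_0)\to h(y^{*})$. Thus $\{\pi(t_n,x_0)\}$ converges, i.e. $\{t_n\}\in\mathfrak M_{x_0}$, which yields $\mathfrak M_{y_0}\subseteq\mathfrak M_{x_0}$.

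For $(2)\Rightarrow(1)$ I would build $h$ directly. Given $y\in H(y_0)$, choose a sequence $\{t_n\}$ with $\sigma(t_n,y_0)\to y$; then $\{t_n\}\in\mathfrak M_{y_0}\subseteq\mathfrak M_{x_0}$, so $\{\pi(t_n,x_0)\}$ converges and I set $h(y):=\lim_{n\to\infty}\pi(t_n,x_0)\in H(x_0)$. Independence of the chosen sequence is obtained by interleaving two sequences $\{t_n\}$ and $\{s_n\}$ both realizing $y$ into a single sequence $\{r_n\}$: since $\sigma(r_n,y_0)\to y$ we have $\{r_n\}\in\mathfrak M_{x_0}$, so $\{\pi(r_n,x_0)\}$ converges and its two subsequential limits $\lim\pi(t_n,x_0)$ and $\lim\pi(s_n,x_0)$ must coincide. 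Taking $t_n\equiv 0$ gives $h(y_0)=x_0$, and for equivariance I use that $\sigma(t_n,y_0)\to y$ forces $\sigma(t+t_n,y_0)\to\sigma(t,y)$, whence $h(\sigma(t,y))=\lim\pi(t+t_n,x_0)=\lim\pi(t,\pi(t_n,x_0))=\pi(t,h(y))$ by continuity of $\pi(t,\cdot)$.

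The main obstacle is the continuity of $h$, which I would settle by a diagonal argument. Given $y_k\to y$ in $H(y_0)$, for each $k$ I select from a sequence realizing $y_k$ a single time $s_k$ with $d_Y(\sigma(s_k,y_0),y_k)<1/k$ and $d_X(\pi(s_k,x_0),h(y_k))<1/k$. Then $\sigma(s_k,y_0)\to y$, so $\{s_k\}\in\mathfrak M_{x_0}$ and, by the well-definedness already established, $\pi(s_k,x_0)\to h(y)$; combined with $d_X(\pi(s_k,x_0),h(y_k))<1/k$ this forces $h(y_k)\to h(y)$. The delicate points are exactly this simultaneous control of the $Y$- and $X$-distances along the diagonal and the interleaving step behind well-definedness, both of which draw their force from the inclusion $\mathfrak M_{y_0}\subseteq\mathfrak M_{x_0}$.
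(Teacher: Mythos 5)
Your proposal is correct, and there is an important point of comparison to make: the paper itself gives \emph{no} proof of Theorem \ref{thC2} --- it is imported with the citations \cite{Che_1977}, \cite[ChI]{Che_2009} --- so your argument is not competing with an in-paper proof but supplying one. Your $(1)\Rightarrow(2)$ direction is the immediate computation $\pi(t_n,x_0)=h(\sigma(t_n,y_0))\to h(y^{*})$, and your $(2)\Rightarrow(1)$ construction (define $h(y):=\lim\pi(t_n,x_0)$ along any sequence with $\sigma(t_n,y_0)\to y$; well-definedness by interleaving two realizing sequences; $h(y_0)=x_0$ via $t_n\equiv 0$; equivariance via $\pi(t+t_n,x_0)=\pi(t,\pi(t_n,x_0))$ and continuity of $\pi(t,\cdot)$; continuity of $h$ via the $1/k$-diagonal selection) is complete: every step is justified, and the only places where the inclusion $\mathfrak M_{y_0}\subseteq \mathfrak M_{x_0}$ is needed are exactly where you invoke it. Methodologically this is the same device the paper uses in its proof of Theorem \ref{thC1} --- produce a map on the hull $H(y_0)$ by passing to limits and then verify equivariance along convergent sequences --- except that in Theorem \ref{thC1} the map is obtained by extending an already given uniformly continuous $h:\Sigma_{y_0}\to\Sigma_{x_0}$, whereas you must build $h$ from nothing but the sequence inclusion, which is why the interleaving and diagonal arguments (absent from the proof of Theorem \ref{thC1}) are genuinely necessary in your setting. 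Two small remarks: you correctly read through the typos in formula (\ref{eqC2_1}) (the paper writes $h(\sigma(t,y))=\pi(t,h(x))$ for $y\in H(x_0)$, where $h(y)$ and $y\in H(y_0)$ are clearly intended); and your proof never actually uses completeness of $X$ and $Y$, since convergence in $X$ is built into the definition of $\mathfrak M_{x_0}$ --- that is legitimate (an unused hypothesis is not an error), and worth noting, since it shows the equivalence holds for arbitrary metric phase spaces.
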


\begin{theorem}\label{thC3}\cite{Sch75} If the spaces $X$ and $Y$
are complete and $y_0$ is Lagrange stable, then the strong
comparability implies uniform comparability and, consequently, they
are equivalent.
\end{theorem}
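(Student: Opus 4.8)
The plan is to exploit the single structural fact that Lagrange stability of $y_0$ forces the hull $H(y_0)=\overline{\Sigma_{y_0}}$ to be compact, after which continuity of the comparison map upgrades automatically to uniform continuity by the Heine--Cantor theorem.

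First I would unwind the hypothesis of strong comparability. By the definition this furnishes a continuous map $h:H(y_0)\to H(x_0)$ with $h(y_0)=x_0$ and $h(\sigma(t,y))=\pi(t,h(y))$ for all $y\in H(y_0)$ and $t\in\mathbb R$. Since $y_0$ is stable in the sense of Lagrange, its trajectory $\Sigma_{y_0}$ is relatively compact, and therefore $H(y_0)$ is a compact metric space. A continuous mapping defined on a compact metric space is uniformly continuous, so $h$ is in fact uniformly continuous on all of $H(y_0)$.

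Next I would pass to the restriction $\widetilde h:=h|_{\Sigma_{y_0}}$. The equivariance relation together with $h(y_0)=x_0$ yields, for every $t\in\mathbb R$, the identity $\widetilde h(\sigma(t,y_0))=h(\sigma(t,y_0))=\pi(t,h(y_0))=\pi(t,x_0)$; in particular $\widetilde h$ carries $\Sigma_{y_0}$ into $\Sigma_{x_0}$ and satisfies condition (\ref{eqC1_1}). As the restriction of a uniformly continuous map to a subset, $\widetilde h$ is itself uniformly continuous, whence $x_0$ is uniformly comparable by the character of recurrence with $y_0$.

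Finally, the reverse implication---uniform comparability implies strong comparability when $X$ and $Y$ are complete---is already recorded in Corollary \ref{corC1}, so the two notions coincide and the asserted equivalence follows. I do not anticipate a genuine obstacle: essentially the whole argument rests on the passage from continuity to uniform continuity via compactness of $H(y_0)$. The only point demanding minor care is verifying that the restricted map $\widetilde h$ takes values in $\Sigma_{x_0}$ (and not merely in $H(x_0)$) and respects (\ref{eqC1_1}), both of which are immediate consequences of the equivariance property evaluated at $y=y_0$.
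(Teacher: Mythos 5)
Your proof is correct. Note that the paper gives no proof of Theorem \ref{thC3} at all---it is quoted from Shcherbakov \cite{Sch75}---so there is no internal argument to compare against; your reasoning is the natural (and almost certainly the original) one: Lagrange stability of $y_0$ makes $H(y_0)=\overline{\Sigma_{y_0}}$ compact, Heine--Cantor upgrades the continuity of the strong-comparability map $h:H(y_0)\to H(x_0)$ to uniform continuity, and the restriction of $h$ to $\Sigma_{y_0}$ satisfies (\ref{eqC1_1}) by equivariance evaluated at $y=y_0$ together with $h(y_0)=x_0$. The only points worth flagging are bookkeeping: completeness of $Y$ enters only to ensure that the closure of the relatively compact trajectory $\Sigma_{y_0}$ is compact, while completeness of $X$ is needed solely for the reverse implication, which you correctly delegate to Theorem \ref{thC1} and Corollary \ref{corC1}; with those remarks, the equivalence follows as you state.
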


\begin{remark}\label{r2.1.6}
From Theorems $\ref{thC1}$ and $\ref{thC3}$ follows that the strong
comparability of the point $x_0$ with $y_0$ is equivalent to their
uniform comparability if the point $y_0$ is st. $L$ and the phase
space $X$ and $Y$ are complete. In general case these notions are
apparently different $($though we do not know the according
example$)$.
\end{remark}

\begin{theorem}\label{thPC1} \cite{Sch72,scher85}  Let $x_0\in X$ be uniformly
comparable by the character of recurrence with $y_0\in Y$. If
$y_0\in Y$ is pseudo recurrent (respectively, pseudo periodic), then
$x_0$ is so.
\end{theorem}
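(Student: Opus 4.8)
The plan is to transfer the almost-period–finding mechanism from $y_0$ to $x_0$ through the uniformly continuous comparison map $h$, exploiting the fact that uniform continuity converts a $\delta$-almost period upstairs into an $\varepsilon$-almost period downstairs with a $\delta$ that does not depend on the point along the trajectory.

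First I would fix $\varepsilon>0$ and $t_0$ and, using the uniform continuity of the comparison map $h:\Sigma_{y_0}\to\Sigma_{x_0}$ provided by the hypothesis, choose $\delta=\delta(\varepsilon)>0$ so that $\rho(h(y'),h(y''))<\varepsilon$ whenever $y',y''\in\Sigma_{y_0}$ satisfy $\rho(y',y'')<\delta$. The decisive observation is that $h$ intertwines the two flows along trajectories: since $h(\sigma(t,y_0))=\pi(t,x_0)$ by (\ref{eqC1_1}), for any $q=\sigma(s,y_0)\in\Sigma_{y_0}$ and any $\tau$ one has $h(\sigma(\tau,q))=h(\sigma(\tau+s,y_0))=\pi(\tau+s,x_0)=\pi(\tau,h(q))$; moreover $h$ carries $\Sigma_{y_0}$ onto $\Sigma_{x_0}$, so every $p\in\Sigma_{x_0}$ is of the form $p=h(q)$ with $q\in\Sigma_{y_0}$.

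Next I would record the key reduction: if $\tau$ is a $\delta$-shift of $q$, i.e. $\tau\in\mathfrak T(q,\delta)$, then $\rho(\pi(\tau,p),p)=\rho(h(\sigma(\tau,q)),h(q))<\varepsilon$, because $\rho(\sigma(\tau,q),q)<\delta$; hence $\tau\in\mathfrak T(p,\varepsilon)$. Thus every $\delta$-almost period of a point $q\in\Sigma_{y_0}$ is automatically an $\varepsilon$-almost period of the corresponding point $p=h(q)\in\Sigma_{x_0}$.

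Finally I would feed the recurrence of $y_0$ into this reduction. In the pseudo recurrent case, given $p=h(q)\in\Sigma_{x_0}$, pseudo recurrence of $y_0$ applied to $\delta$, $t_0$, $q$ supplies $L=L(\delta,t_0)$ and $\tau\in[t_0,t_0+L]$ with $\tau\in\mathfrak T(q,\delta)$; the reduction gives $\tau\in\mathfrak T(p,\varepsilon)$, and since $\delta$ depends only on $\varepsilon$ the length $L$ depends only on $\varepsilon,t_0$, which is exactly what pseudo recurrence of $x_0$ requires. In the pseudo periodic case, pseudo periodicity of $y_0$ applied to $\delta$, $t_0$ supplies a single $\tau=\tau(\delta,t_0)>t_0$ lying in $\mathfrak T(q,\delta)$ for every $q\in\Sigma_{y_0}$ simultaneously; the reduction then shows this same $\tau$ lies in $\mathfrak T(p,\varepsilon)$ for every $p\in\Sigma_{x_0}$, establishing pseudo periodicity of $x_0$. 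The only delicate point — and the reason the hypothesis must be \emph{uniform} rather than merely pointwise comparability — is that $\delta$ has to be chosen independently of $q$; a $q$-dependent $\delta$ would make $L$ (and, in the pseudo periodic case, $\tau$) depend on $p$, destroying the required uniformity. No further estimates are needed, so the argument is essentially complete once uniform continuity is invoked.
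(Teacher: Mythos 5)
Your proposal is correct and follows essentially the same route as the paper's proof: choose $\delta=\delta(\varepsilon)$ from the uniform continuity of $h$, apply the pseudo recurrence (respectively, pseudo periodicity) of $y_0$ at level $\delta$ to a preimage $q\in h^{-1}(p)$, and transfer the resulting shift $\tau\in\mathfrak T(q,\delta)$ to $\tau\in\mathfrak T(p,\varepsilon)$ via the intertwining relation $h(\sigma(\tau,q))=\pi(\tau,h(q))$. Your write-up merely makes explicit two points the paper leaves implicit — the surjectivity of $h$ onto $\Sigma_{x_0}$ and the trajectory-wise intertwining computation — so no substantive difference remains.
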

\begin{proof} If $y_0$ is
pseudo recurrent (respectively, pseudo periodic) then for any
$\varepsilon >0$, $t_0\in\mathbb T$ and $p\in \Sigma_{x_0}$ there
exist $L=L(\varepsilon,t_0)>0$ and $\tau =\tau
(\varepsilon,t_0,p)\in [t_0,t_0+L]$ (respectively, there exists
$\tau =\tau (\varepsilon,t_0)>t_0$) such that $\tau \in \mathfrak
T (p,\varepsilon)$ for any $p\in \Sigma_{x_0}$. Since $x_0$ is
uniformly comparable by the character of recurrence with $y_0$,
then there exists a uniformly continuous mapping
$h:\Sigma_{y_0}\mapsto \Sigma_{x_0}$ satisfying (\ref{eqC1_1}).
Let $p\in \Sigma_{x_0}$, $q\in h^{-1}(p)$, $\delta =\delta
(\varepsilon)>0$ be chosen from the uniform continuity of $h$ and
$\tilde{L}(\varepsilon,t_0):=L(\delta(\varepsilon),t_0)>0$ and
$\tilde{\tau}(\varepsilon,t_0,p):=\tau(\delta(\varepsilon),t_0,q)\in
[t_0,t_0+\tilde{L}(\varepsilon,t_0)]$ (respectively,
$\tilde{\tau}(\varepsilon,t_0):=\tau(\delta(\varepsilon),t_0)>t_0$),
then $\tau\in \mathfrak T (p,\varepsilon)$ because $\tau\in
\mathfrak T(q,\delta)$ and $h(q)=p$. Theorem is proved.
\end{proof}

\begin{theorem}\label{thPC2} Let $\langle (X,\mathbb T,\pi), (Y,\mathbb T,\sigma),h
\rangle$ be a nonautonomous dynamical system and $x_0\in X$ be a
conditionally Lagrange stable point (i.e., the set $\Sigma_{x_0}$
is conditionally precompact), then the following statement hold:
\begin{enumerate}
\item if $H(x_0)\bigcap X_{y_0}$ consists a single point $\{x_0\}$,
where $y_0:=h(x_0)$, then $\mathfrak N_{y_0}\subseteq \mathfrak
N_{x_0}$;
\item if the set $H(x_0)\bigcap X_{q}$ contains at most one point  for any
$q\in H(y_0)$, then $\mathfrak M_{y_0}\subseteq \mathfrak M_{x_0}$.
\end{enumerate}
\end{theorem}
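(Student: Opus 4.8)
The plan is to handle both parts by a single mechanism: from convergence (respectively, return) of the projected sequence $\sigma(t_n,y_0)$, deduce precompactness of the lifted sequence $\{\pi(t_n,x_0)\}$ via conditional Lagrange stability, and then use the fiberwise uniqueness hypothesis to force every subsequential limit to coincide, which yields convergence of the whole sequence. Throughout I use the homomorphism identity $h(\pi(t,x_0))=\sigma(t,h(x_0))=\sigma(t,y_0)$ and write $X_q:=h^{-1}(q)$ for the fiber over $q$.

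For part (1), I would take an arbitrary $\{t_n\}\in\mathfrak N_{y_0}$, so that $\sigma(t_n,y_0)\to y_0$. Since $h(\pi(t_n,x_0))=\sigma(t_n,y_0)$, the projections of $\{\pi(t_n,x_0)\}$ converge and hence lie in a compact subset of $Y$. Because $\Sigma_{x_0}$ is conditionally precompact, the sequence $\{\pi(t_n,x_0)\}$ is itself precompact in $X$. Let $\bar x$ be the limit of any convergent subsequence $\pi(t_{n_k},x_0)\to\bar x$; by continuity of $h$ we get $h(\bar x)=y_0$, and $\bar x\in H(x_0)$, so $\bar x\in H(x_0)\cap X_{y_0}=\{x_0\}$, forcing $\bar x=x_0$. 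A precompact sequence all of whose subsequential limits equal $x_0$ converges to $x_0$; thus $\pi(t_n,x_0)\to x_0$ and $\{t_n\}\in\mathfrak N_{x_0}$.

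Part (2) runs along the same lines. Given $\{t_n\}\in\mathfrak M_{y_0}$, the projections $\sigma(t_n,y_0)$ converge to some $q\in H(y_0)$, so by conditional Lagrange stability $\{\pi(t_n,x_0)\}$ is again precompact. If two subsequences converge to $\bar x_1$ and $\bar x_2$, then both lie in $H(x_0)$ and, by continuity of $h$, both project to $q$, whence $\bar x_1,\bar x_2\in H(x_0)\cap X_q$. By hypothesis this set contains at most one point, so $\bar x_1=\bar x_2$. Hence all subsequential limits coincide, $\{\pi(t_n,x_0)\}$ converges, and $\{t_n\}\in\mathfrak M_{x_0}$.

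The only delicate step I expect is the passage from precompactness of the projected sequence to precompactness of the lifted sequence $\{\pi(t_n,x_0)\}$: this is precisely where conditional Lagrange stability ($\Sigma_{x_0}$ conditionally precompact) must be invoked in its correct fibered form, namely that a subset of $\Sigma_{x_0}$ whose image under $h$ has compact closure has itself compact closure in $X$. Everything else reduces to the standard fact that a sequence in a compact set with a unique subsequential limit converges to that limit, which is routine.
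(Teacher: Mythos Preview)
Your proposal is correct and follows essentially the same approach as the paper: both arguments use conditional Lagrange stability to obtain precompactness of $\{\pi(t_n,x_0)\}$ from convergence of the projected sequence, then invoke the fiberwise uniqueness hypothesis to show all subsequential limits coincide (equal to $x_0$ in part (i), equal to each other in part (ii)). The delicate step you flagged is exactly the one the paper handles in the same way, and your remark about the fibered form of conditional precompactness is precisely what is needed.
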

\begin{proof} Let $\{t_n\}\in \mathfrak N_{y_0}$, then
$\sigma(t_n,y_0)\to y_0$ as $n\to \infty$. Since $\Sigma_{x_0}$ is
conditionally precompact and $\{\pi(t_n,x_0)\}=\Sigma_{x_0}\bigcap
h^{-1}(\{\sigma(t_n,y_0)\})$, then $\{\pi(t_n,x_0)\}$ is a
precompact sequence. Tho show that $\{t_n\}\in\mathfrak N_{x_0}$ it
is sufficient to prove that the sequence $\{\pi(t_n,x_0)\}$ has at
most one limiting point. Let $p_{i}$ ($i=1,2$) be two limiting
points of $\{\pi(t_n,x_0)\}$, then there are
$\{t_{k_{n}^{i}}\}\subseteq \{t_n\}$ such that
$p_{i}:=\lim\limits_{n\to \infty}\pi(t_{k_{n}^{i}},x_0)$ ($i=1,2$).
Since $\{t_{k_{n}^{i}}\}\in \mathfrak N_{y_0}$, then $p_{i}\in
H(x_0)\bigcap X_{y_0}=\{x_0\}$ ($i=1,2$) and, consequently,
$p_1=p_2=x_0$. Thus we have $\mathfrak N_{y_0}\subseteq \mathfrak
N_{x_0}$.

Let now $\{t_n\}\in\mathfrak M_{y_0}$, $q\in H(y_0)$ such that
$q=\lim\limits_{n\to \infty}\sigma(t_n,y_0)$ and $H(x_0)\bigcap
X_{q}$ contains at most one point. By the same arguments as above
the sequence $\{\pi(t_n,x_0)\}$ is precompact. To show that the
sequence $\{\pi(t_n,x_0)\}$ converges we will use the similar
reasoning as above. Let $p_{i}$ ($i=1,2$) be two limiting points of
$\{\pi(t_n,x_0)\}$, then there are $\{t_{k_{n}^{i}}\}\subseteq
\{t_n\}$ such that $p_{i}:=\lim\limits_{n\to
\infty}\pi(t_{k_{n}^{i}},x_0)$ ($i=1,2$). Since
$\sigma(t_{k_{n}^{i}},y_0)\to q$ as $n\to \infty$, then $p_{i}\in
H(x_0)\bigcap X_{q}$ ($i=1,2$) and, consequently, $p_1=p_2$. Thus we
have $\mathfrak M_{y_0}\subseteq \mathfrak M_{x_0}$. Theorem is
completely proved.
\end{proof}

\begin{theorem}\label{thPC3} Let $\langle (X,\mathbb T,\pi), (Y,\mathbb T,\sigma),h
\rangle$ be a nonautonomous dynamical system and $x_0\in X$ be a
conditionally Lagrange stable point. Suppose that the following
conditions are fulfilled:
\begin{enumerate}
\item[a.] $Y$ is minimal; \item[b.] $H(x_0)\bigcap X_{y_0}$
consists a single point $\{x_0\}$, where $y_0:=h(x_0)$; \item[c.]
the set $H(x_0)$ is distal, i.e., $\inf\limits_{t\in\mathbb
T}\rho(\pi(t,x_1),\pi(t,x_2))>0$ for any $x_1,x_2\in H(x_0)$ with
$x_1\not= x_2$ and $h(x_1)=h(x_2)$.
\end{enumerate}
Then $\mathfrak M_{y_0}\subseteq \mathfrak M_{x_0}$.
\end{theorem}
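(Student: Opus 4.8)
The plan is to deduce the conclusion from part (2) of Theorem \ref{thPC2} by showing that conditions (a)--(c) force the fiber $H(x_0)\bigcap X_{q}$ to contain at most one point for \emph{every} $q\in H(y_0)$. Once that fiberwise uniqueness is established, Theorem \ref{thPC2}(2) gives $\mathfrak M_{y_0}\subseteq \mathfrak M_{x_0}$ directly, so the whole work reduces to verifying the hypothesis of that theorem under the present (different) assumptions.

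First I would exploit minimality. Since $Y$ is minimal (condition a), every orbit closure equals $Y$, so $H(y_0)=Y$ and, for any $q\in H(y_0)$, also $y_0\in H(q)$; hence there is a sequence $\{s_n\}\subset \mathbb T$ with $\sigma(s_n,q)\to y_0$. Now suppose, towards a contradiction, that for some $q\in H(y_0)$ the fiber $H(x_0)\bigcap X_{q}$ contains two distinct points $x_1\neq x_2$. Consider the sequences $\{\pi(s_n,x_1)\}$ and $\{\pi(s_n,x_2)\}$: both lie in the closed invariant set $H(x_0)$ and are mapped by $h$ onto $\sigma(s_n,q)\to y_0$. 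Because $x_0$ is conditionally Lagrange stable, $H(x_0)$ is conditionally compact, and since $\{y_0\}\cup\{\sigma(s_n,q):n\in\mathbb N\}$ is a compact subset of $Y$, both sequences are precompact. Passing to a common subsequence I obtain $\pi(s_{n_k},x_i)\to \bar x_i\in H(x_0)$ with $h(\bar x_i)=y_0$, so $\bar x_i\in H(x_0)\bigcap X_{y_0}=\{x_0\}$ by condition (b). Thus $\bar x_1=\bar x_2=x_0$, which gives $\rho(\pi(s_{n_k},x_1),\pi(s_{n_k},x_2))\to 0$. This contradicts the distality of $H(x_0)$ (condition c), which requires $\inf_{t\in\mathbb T}\rho(\pi(t,x_1),\pi(t,x_2))>0$ for the distinct fiber-mates $x_1,x_2$. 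Therefore $H(x_0)\bigcap X_{q}$ is at most a singleton for every $q\in H(y_0)$, and applying Theorem \ref{thPC2}(2) yields $\mathfrak M_{y_0}\subseteq \mathfrak M_{x_0}$.

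The step I expect to be the main obstacle is the justification that the closure $H(x_0)$ inherits conditional compactness from the conditional Lagrange stability of the orbit $\Sigma_{x_0}$ — i.e.\ that $H(x_0)\bigcap h^{-1}(K)$ is precompact for every compact $K\subseteq Y$ — since the argument's extraction of convergent fiberwise subsequences of $\{\pi(s_n,x_i)\}$ rests entirely on this. Once that closure property is secured, the combination of minimality (to return $q$ near $y_0$), the single-point condition (b) (to pin the limits to $x_0$), and distality (c) (to forbid coalescence) does the essential separating work, and the reduction to Theorem \ref{thPC2}(2) is routine.
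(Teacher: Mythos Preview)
Your proposal is correct and follows essentially the same route as the paper: reduce to Theorem~\ref{thPC2}(ii) by assuming two distinct points in some fiber $H(x_0)\cap X_{q}$, use minimality of $Y$ to pull the base back to $y_0$, extract convergent subsequences by conditional precompactness, and obtain a contradiction between condition~(b) (both limits equal $x_0$) and distality~(c). The concern you flag about $H(x_0)$ inheriting conditional compactness from $\Sigma_{x_0}$ is also handled only lightly in the paper (it simply invokes ``the same arguments as in Theorem~\ref{thPC2}''), so your caution there is appropriate but not a divergence in strategy.
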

\begin{proof} By Theorem \ref{thPC2} to prove this statement it is
sufficient to show that $H(x_0)\bigcap X_{q}$ contains at most one
point for any $q\in H(y_0)$. If we suppose that it is not true, then
there are points $q_0\in H(y_0)$ and $p_{0}^{i}\in H(x_0)\bigcap
X_{q_0}$ such that $p_{0}^{1}\not= p_{0}^{2}$. Then by condition c.
there exists a number $\alpha =\alpha(p_0^{1},p_{0}^{2})>0$ such
that
\begin{equation}\label{eqPC1}
\rho(\pi(t,p_{0}^{1}),\pi(t,p_{0}^{2})\ge \alpha
\end{equation}
for any $t\in \mathbb T$. Since $Y$ is minimal, then
$H(q_0)=H(y_0)=Y$ and, consequently, there exists a sequence
$\{t_n\}\subset \mathbb T$ such that $\sigma(t_n,q_0)\to y_0$ as
$n\to \infty$. Consider the sequences $\{\pi(t_n,p_{0}^{i})\}$
($i=1,2$). Since $\Sigma_{x_0}$ is conditionally precompact, then
by the same arguments as in Theorem \ref{thPC2} the sequences
$\{\pi(tc_n,p_{0}^{i})\}$ ($i=1,2$) are precompact too. Without
loss of generality we may suppose that they are convergent. Denote
by $x^{i}:=\lim\limits_{n\to \infty}\pi(t_n,p_{0}^{i})$ ($i=1,2$).
Then $x^{i}\in H(x_0)\bigcap X_{y_0}=\{x_0\}$ ($i=1,2$) and,
consequently, $x^{1}=x^{2}=x_0$. By the other hand according to
inequality $(\ref{eqPC1})$ we have $\rho(x^{1},x^{2})\ge \alpha
>0$. The obtained contradiction show that our assumption is falls,
i.e., under the conditions of Theorem $H(x_0)\bigcap X_{q}$
contains at most one point. Theorem is proved.
\end{proof}

\begin{remark}\label{remShc} Note that Theorems \ref{thPC2} and
\ref{thPC3} coincide with the results of B. A. Shcherbakov
\cite[ChIII]{Sch72} (see also \cite[ChIII]{scher85}) when the
point $x_0$ is Lagrange stable.
\end{remark}

\section{Birghoff's theorem for non-autonomous dynamical systems (NDS)}

Let $X,Y$ be two complete metric spaces and
\begin{equation}\label{eqB1}
\langle (X,\mathbb T_1,\pi), (Y,\mathbb T_2,\sigma),h)
\end{equation}
be a non-autonomous dynamical system.

\begin{definition}\label{defB1} A subset $M$ of $X$ is said to be
a minimal set of NDS (\ref{eqB1}) if it possesses the following
properties:
\begin{enumerate}
\item[a.] $h(M)=Y$; \item[b.] $M$ is positively invariant, i.e.,
$\pi(t,M)\subseteq M$ for any $t\in\mathbb T_1$; \item[c.] $M$ is
a minimal subset of $X$ possessing properties a. and b..
\end{enumerate}
\end{definition}

\begin{remark} 1. In the case of autonomous dynamical systems
(i.e., $Y$ consists a single point) the definition above coincides
with the usual notion of minimality.

2. If the NDS $\langle (X,\mathbb T_1,\pi), (Y,\mathbb
T_2,\sigma),h)$ is periodic (i.e., there exists a $\tau$-periodic
point $y_0\in Y$ such that $Y=\{\sigma(t,y_0):\ t\in [0,\tau)\}$),
then the nonempty compact set $M\subset X$ is a minimal set of NDS
(\ref{eqB1}) if and only if the set $M_{y_0}:=X_{y_0}\bigcap M$ is
a minimal set of the discrete (autonomous) dynamical system
$(X_{y_0},P)$ generated by positive powers of the map
$P:=\pi(\tau,\cdot):X_{y_0}\to X_{y_0}$.
\end{remark}

\begin{lemma}\label{lB1}  Let $M\subset X$ be a nonempty, closed and positively invariant subset of NDS (\ref{eqB1}) such that $h(M)=Y$,
then the following statements hold:
\begin{enumerate}
\item if $H(x)=M$ for any $x\in M$, where
$H(x):=\overline{\{\pi(t,x):\ t\in \mathbb T_1\}}$, then $M$ is a
minimal set of NDS (\ref{eqB1}); \item if
\begin{enumerate}
\item $\mathbb T_{1}=\mathbb T_{2}$;
\item $Y$ is a minimal set of
autonomous dynamical system $(Y,\mathbb T_2,\sigma)$;
\item $M$ is
a minimal subset of NDS (\ref{eqB1}) and it is conditionally
compact,
\end{enumerate}
then $H(x)=M$ for any $x\in M$.
\end{enumerate}
\end{lemma}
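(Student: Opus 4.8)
The plan is to treat the two implications separately: the first is essentially formal, while the second is where the conditional compactness hypothesis and the minimality of $Y$ do the real work.

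For statement (1), I would simply check the three defining properties of a minimal set from Definition \ref{defB1}. Properties (a) $h(M)=Y$ and (b) positive invariance of $M$ are already part of the hypotheses, so only the minimality property (c) requires argument. The key observation is that a closed, positively invariant set contains the whole orbit closure of each of its points: if $N\subseteq M$ is a nonempty, closed, positively invariant subset (in particular, any competitor possessing properties (a) and (b)), then choosing any $x\in N$ gives $H(x)=\overline{\{\pi(t,x):t\in\mathbb T_1\}}\subseteq N$, because $N$ is closed and $\pi$-invariant. By hypothesis $H(x)=M$, so $M\subseteq N\subseteq M$ and hence $N=M$. Thus $M$ has no proper nonempty closed positively invariant subset, i.e. $M$ is minimal.

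For statement (2), the goal is $H(x)=M$ for every $x\in M$. One inclusion is free: since $M$ is closed and positively invariant and $x\in M$, the same argument as above yields $H(x)\subseteq M$. The whole content is the reverse inclusion, and the natural route is to verify that $H(x)$ itself possesses properties (a) and (b) of Definition \ref{defB1} and then invoke the minimality of $M$. Closedness of $H(x)$ is automatic, and positive invariance follows from continuity of the translations: $\pi(s,H(x))\subseteq\overline{\{\pi(s+t,x):t\in\mathbb T_1\}}\subseteq H(x)$. Hence everything reduces to proving $h(H(x))=Y$; once this holds, $H(x)$ is a nonempty closed positively invariant subset of $M$ projecting onto $Y$, and the minimality of $M$ forces $H(x)=M$.

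The main obstacle, therefore, is establishing the surjectivity $h(H(x))=Y$, and this is exactly the step that consumes both the minimality of $Y$ and the conditional compactness of $M$. Fix $y:=h(x)$ and take an arbitrary $y'\in Y$. Since $(Y,\mathbb T_2,\sigma)$ is minimal and $\mathbb T_1=\mathbb T_2$, the positive semi-orbit of $y$ is dense, so there is a sequence $\{t_n\}$ with $\sigma(t_n,y)\to y'$. The set $K:=\{\sigma(t_n,y):n\in\mathbb N\}\cup\{y'\}$ is compact, being a convergent sequence together with its limit. The points $\pi(t_n,x)$ lie in $M$ and satisfy $h(\pi(t_n,x))=\sigma(t_n,y)\in K$, so they belong to $M\cap h^{-1}(K)$, which is precompact by the conditional compactness of $M$ (used precisely as in the proof of Theorem \ref{thPC2}). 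Extracting a convergent subsequence $\pi(t_{n_k},x)\to p$, continuity of $h$ gives $h(p)=\lim_k\sigma(t_{n_k},y)=y'$, while $p\in H(x)$ by construction. Thus $y'\in h(H(x))$, and since $y'$ was arbitrary, $h(H(x))=Y$. I expect this projection step to be the only genuinely delicate point, since it is where precompactness of the relevant fibre sequences must be manufactured from the base dynamics and the conditional compactness hypothesis rather than assumed outright.
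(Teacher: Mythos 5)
Your argument is correct and, in substance, identical to the paper's: part (1) is the same observation that any closed, positively invariant competitor $N$ with $h(N)=Y$ contains $H(x)=M$ for each of its points $x$, and part (2) follows the paper's scheme exactly --- prove $h(H(x))=Y$ by combining minimality of $Y$ with conditional compactness of $M$, then invoke minimality of $M$ to force $H(x)=M$ (the paper merely dresses both parts as proofs by contradiction, and, unlike you, does not bother to verify closedness and invariance of $H(x)$ explicitly).

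One small correction. You justify the choice of the sequence $\{t_n\}$ by claiming that minimality of $(Y,\mathbb T_2,\sigma)$ makes the \emph{positive} semi-orbit of $y$ dense. Minimality only gives density of the full orbit $\{\sigma(t,y):\ t\in\mathbb T_2\}$; density of positive semi-orbits would follow if $Y$ were compact (or if $\mathbb T_2$ were itself a semigroup), but neither is assumed --- the translation flow on $\mathbb R$ is minimal in this sense, yet no positive semi-orbit is dense. This costs you nothing: since $\mathbb T_1=\mathbb T_2$, you may take the approximating times $t_n$ anywhere in $\mathbb T_2$, which is exactly what the paper does in choosing $\{t_n\}\subset\mathbb T$, and every subsequent step of your argument --- precompactness of $M\cap h^{-1}(K)$, extraction of a convergent subsequence, continuity of $h$ --- is indifferent to the sign of $t_n$.
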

\begin{proof} Let $M\subset X$ be a nonempty, closed and positively invariant subset of NDS
(\ref{eqB1}) such that $h(M)=Y$ and $H(x)=M$ for any $x\in M$. We
will show that in this case $M$ is a minimal set of NDS
(\ref{eqB1}). If we suppose that it is not true, then there exists
a subset $\tilde{M}\subset M$ such that $\tilde{M}$ is a nonempty,
closed, positively invariant, $h(\tilde{M})=Y$ and $\tilde{M}\not=
M$. Let $x\in \tilde{M}\subset M$, then by condition of Lemma we
have $M=H(x)\subseteq \tilde{M}\subset M$ and, consequently,
$M=\tilde{M}$. The obtained contradiction proves our statement.

Suppose that $M$ is a minimal subset of NDS (\ref{eqB1}) and it is
conditionally compact. We will establish that, then $H(x)=M$ for
any $x\in M$. In fact. If it is not so, then there exists a point
$x_0\in M$ such that $H(x_0)\subset M$ and $H(x_0)\not= M$. Since
$h(\pi(t,x_0))=\sigma(t,y_0)$ (where $y_0:=h(x_0)$) for any
$t\in\mathbb T_{1}$ and $Y$ is minimal, then for any $y\in Y$
there exists a sequence $\{t_n\}\subset \mathbb T$ such that
$\sigma(t_n,y_0)\to y$ as $n\to \infty$. Since the set $M$ is
conditionally compact without loss of generality we can suppose
that the sequence $\{\pi(t_n,x_0)\}$ converges. Denote by
$x:=\lim\limits_{n\to \infty}\pi(t_n,x_0)$, then we have $h(x)=y$.
Since $y\in Y$ is an arbitrary point, then $H(H(x_0))=Y$. Thus we
have a nonempty, positively invariant subset $H(x_0)\subset M$
such that $h(H(x_0))=Y$ and $H(x_0)\not= M$. This fact contradicts
to the minimality of $M$. The obtained contradiction proves the
second statement of Lemma.
\end{proof}

\begin{coro}\label{corB1} Let $\langle (X,\mathbb T,\pi), (Y,\mathbb
T,\sigma),h)$ be a non-autonomous dynamical system and $M\subset
X$ be a nonempty, conditionally compact and positively invariant
set. If the dynamical system $(Y,\mathbb T,\sigma)$ is minimal,
then the subset $M$ is a minimal subset of NDS (\ref{eqB1}) if and
only if $H(x)=M$ for any $x\in M$.
\end{coro}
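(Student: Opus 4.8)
The plan is to regard this corollary as the specialization of Lemma~\ref{lB1} to the autonomous base case $\mathbb T_1=\mathbb T_2=\mathbb T$, repackaged as an equivalence: its two implications are precisely the two assertions of that lemma. Consequently the only real work is to check, in each direction, that the standing hypotheses of Lemma~\ref{lB1} --- namely that $M$ be nonempty, closed and positively invariant with $h(M)=Y$ --- are available under the (formally weaker) hypotheses of the corollary.

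I would treat the implication ``$H(x)=M$ for all $x\in M$ $\Rightarrow$ $M$ minimal'' first. Here $M=H(x)$ is by construction a closure, hence closed, so closedness is free; nonemptiness and positive invariance are given. The substantive point is the surjectivity $h(M)=Y$. Fixing $x\in M$ and writing $y_0:=h(x)$, one has $h(\Sigma_x)=\Sigma_{y_0}$ and, by minimality of $Y$, $\overline{\Sigma_{y_0}}=Y$; the inclusion $h(M)\subseteq Y$ is trivial. To promote it to an equality I would take an arbitrary $y\in Y$ together with a sequence $\{t_n\}$ with $\sigma(t_n,y_0)\to y$, lift to $\pi(t_n,x)\in M$, and note that these lifts project into the convergent --- hence precompact --- set $\{\sigma(t_n,y_0)\}$, so that conditional compactness of $M$ furnishes a subsequence $\pi(t_{n_k},x)\to x'\in M$ with $h(x')=y$. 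This yields $h(M)=Y$, after which Lemma~\ref{lB1}(1) applies verbatim and gives the minimality of $M$.

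For the converse ``$M$ minimal $\Rightarrow$ $H(x)=M$ for all $x\in M$'' the three itemized hypotheses of Lemma~\ref{lB1}(2) hold at once: $\mathbb T_1=\mathbb T_2$ is our standing assumption, $Y$ is minimal by hypothesis, and $M$ is assumed minimal and conditionally compact. The remaining standing hypotheses are also in place, since $h(M)=Y$ and the closedness of $M$ are built into the notion of a minimal set (Definition~\ref{defB1}), as is customary in topological dynamics; thus Lemma~\ref{lB1}(2) delivers $H(x)=M$ for every $x\in M$. I expect the main --- indeed the only --- obstacle to be the surjectivity $h(M)=Y$ in the first implication: one cannot simply pass to the closure of $h(M)$, because the continuous image of the closed but possibly noncompact set $M$ need not be closed, and it is exactly at this point that the conditional compactness hypothesis is indispensable.
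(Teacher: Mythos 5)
Your proposal is correct and is essentially the paper's own (implicit) argument: the corollary is stated there without proof as a direct specialization of Lemma~\ref{lB1}, and your derivation --- checking the lemma's standing hypotheses, with the surjectivity $h(M)=Y$ obtained from minimality of $(Y,\mathbb T,\sigma)$ together with conditional compactness and closedness of $M$ --- reproduces exactly the technique the paper itself uses inside the proof of Lemma~\ref{lB1}(2), where $h(H(x_0))=Y$ is established by the same lifting of a sequence $\sigma(t_n,y_0)\to y$. Your reading that closedness is built into Definition~\ref{defB1} (so that a minimal set and its competitors are closed) is also the right one: although the definition as printed omits the word ``closed'', the proof of Lemma~\ref{lB1} takes the competing set $\tilde M$ closed, and without this convention both the lemma and the corollary would fail (an orbit of an irrational rotation would be a non-closed ``minimal'' set with $H(x)\neq M$).
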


\begin{theorem}\label{thB1} Suppose that $\langle (X,\mathbb T_1,\pi), (Y,\mathbb
T_2,\sigma),h)$ is a non-autonomous dynamical system and $X$ is
conditionally compact, then there exists a minimal subset $M$.
\end{theorem}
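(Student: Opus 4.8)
The plan is to prove existence of a minimal set by the classical Zorn's lemma argument underlying Birkhoff's theorem, with the conditional compactness hypothesis supplying exactly the compactness needed to keep nested intersections nonempty. First I would introduce the family $\mathcal{F}$ of all nonempty, closed, positively invariant subsets $N\subseteq X$ satisfying $h(N)=Y$, partially ordered by inclusion. This family is nonempty, since $X$ itself belongs to it: $X$ is closed, $\pi(t,X)\subseteq X$ for every $t\in\mathbb{T}_1$, and $h(X)=Y$ because $h$ is a homomorphism of $(X,\mathbb{T}_1,\pi)$ onto $(Y,\mathbb{T}_2,\sigma)$. A minimal element of $(\mathcal{F},\subseteq)$ is then a minimal subset of $X$ possessing properties a.\ and b.\ of Definition \ref{defB1}, i.e.\ a minimal set of the NDS \eqref{eqB1}.

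The crucial preliminary observation is that each fiber $X_y:=h^{-1}(y)$ ($y\in Y$) is compact. Indeed, the singleton $\{y\}$ is compact, so by the conditional compactness of $X$ the preimage $h^{-1}(\{y\})$ is relatively compact; being closed in the complete metric space $X$ (as $h$ is continuous), it is in fact compact. This is what replaces the global compactness used in the autonomous Birkhoff theorem.

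Next I would verify the hypothesis of Zorn's lemma, namely that every chain in $\mathcal{F}$ has a lower bound in $\mathcal{F}$; this is the heart of the argument. Given a chain $\{N_\alpha\}\subseteq\mathcal{F}$, put $N:=\bigcap_\alpha N_\alpha$. As an intersection of closed, positively invariant sets, $N$ is again closed and positively invariant, so it remains only to check $h(N)=Y$, i.e.\ that $N$ meets every fiber. Fix $y\in Y$ and consider the sets $(N_\alpha)_y:=N_\alpha\cap X_y$. Each is a nonempty closed subset of the compact fiber $X_y$ (nonempty because $h(N_\alpha)=Y$), and since $\{N_\alpha\}$ is totally ordered by inclusion the family $\{(N_\alpha)_y\}$ has the finite intersection property; compactness of $X_y$ then gives $N\cap X_y=\bigcap_\alpha (N_\alpha)_y\neq\emptyset$. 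Hence $y\in h(N)$ for every $y\in Y$, so $h(N)=Y$ and $N\in\mathcal{F}$. As $N\subseteq N_\alpha$ for all $\alpha$, it is the required lower bound.

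Finally, Zorn's lemma furnishes a minimal element $M$ of $(\mathcal{F},\subseteq)$, which is the desired minimal subset of the NDS. The main obstacle is precisely the nonemptiness of the chain intersection over each fiber: without compactness an intersection of a decreasing chain of nonempty closed sets can be empty. Once fiber compactness is extracted from conditional compactness, the finite intersection property settles this point, and the remainder is the routine Zorn machinery together with the elementary stability of closedness, positive invariance and the condition $h(\,\cdot\,)=Y$ under intersection.
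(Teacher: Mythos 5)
Your proof is correct and follows essentially the same route as the paper's: Zorn's lemma applied to the family of nonempty, closed, positively invariant subsets projecting onto $Y$, with fiber compactness (extracted from conditional compactness of $X$) supplying the finite intersection property that keeps chain intersections nonempty fiberwise. If anything, your formulation is slightly more careful than the paper's, whose family $\mathfrak A(X)$ omits the conditions of closedness and $h(A)=Y$ even though both are tacitly used in its chain argument.
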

\begin{proof}
Denote by $\mathfrak A (X)$ the family of all nonempty, positively
invariant and conditionally compact subsets $A\subseteq X$. Note
that $\mathfrak A (X)\not=\emptyset$ because $X\in \mathfrak A
(X)$. It is clear that the family $\mathfrak A (X)$ partially
ordered with respect to inclusion $\subseteq$. Namely: $A_1\le
A_2$ if and only if $A_1\subseteq A_2$ for all $A_1,A_2\in
\mathfrak A (X)$. If $\mathcal A\subseteq\mathfrak A (X)$ is a
linear ordered subfamily of $\mathfrak A (X)$, then the
intersection $M$ of subsets of the family $\mathcal A$ is
nonempty. In fact. For any $y\in Y$ the family of subsets
$\mathcal A_{y}:=\{A_{y}:\ A\in \mathcal A\}$, where
$A_{y}:=A\bigcap X_{y}$, is linear ordered and, consequently,
$$
M_{y}=\bigcap \{A_{y}:\ A\in \mathcal A\}\not= \emptyset
$$
because $X_{y}$ is compact. Thus $M$ is a closed, positively
invariant set such that $h(M)=Y$ and, consequently, $M\in
\mathfrak A(X)$. By Lemma of Zorn the family $\mathfrak A(X)$
contains at least one minimal element $M$. It is clear that $M$ is
a minimal set. Theorem is proved.
\end{proof}

If $X$ is a compact metric space, then $X^{X}$ denote the
collection of all maps from $X$ to itself, provided with the
product topology, or, what is the same thing, the topology of
pointwise convergence. By Tikhonov theorem, $X^{X}$ is compact.

$X^{X}$ has a semigroup structure defined by the composition of
maps.

Let $ \langle (X,\mathbb T _{1},\pi),(Y,\mathbb
T_{2},\sigma),h\rangle $ be a non-autonomous dynamical system and
$y \in Y$ be a Poisson stable point. Denote by
$${E}_{y}^{+}:=\{\xi |\quad \exists \{t_{n}\}\in \mathfrak N
_{y}^{+\infty} \quad \mbox{such that}\quad \pi ^{t_{n}}|_{X_{y}}\to \xi\},$$
where $X_{y}:=\{x\in X|\quad h(x)=y \}$ and $\to$ means the
pointwise convergence\index{${E}_{y}$} and $\mathfrak N_{x}^{+\infty}:=
\{\{t_n\}\in \mathfrak N_{x}:\ t_n\to +\infty \ \mbox{as}\ n\to \infty \}$.

\begin{lemma} \label{l109.2.4}\cite[ChIX,XV]{Che_2015}
Let $y \in Y$ be a Poisson stable point, $\langle (X,\mathbb
T_{1},\pi ),$ $(Y,\mathbb T_2,\sigma),h\rangle $ be a
non-autonomous dynamical system and $X$ be a conditionally compact
set. Then $E_{y}^{+} $ is a nonempty compact sub-semigroup of the
semigroup $X_{y}^{X_{y}}.$
\end{lemma}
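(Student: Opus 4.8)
The plan is to recognise $E_y^{+}$ as a fibre-wise analogue of the Ellis (enveloping) semigroup and to exploit the standard right-topological semigroup structure on a space of self-maps of a compact set.

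First I would record the consequences of conditional compactness: each fibre $X_q=h^{-1}(q)$ is compact, and for every compact $K\subseteq Y$ the set $h^{-1}(K)$ is compact. In particular $X_y$ is compact, so by Tychonoff's theorem $X_y^{X_y}$, endowed with the topology of pointwise convergence, is a compact Hausdorff space, and composition turns it into a right-topological semigroup, since for fixed $g$ the translation $f\mapsto f\circ g$ is continuous (it is evaluation at the fixed points $g(x)$); composition on the left by a continuous self-map of $X$ is continuous as well, a fact I use in the semigroup step below. I would also check at the outset that every $\xi\in E_y^{+}$ really maps $X_y$ into $X_y$: if $\xi=\lim_n \pi^{t_n}|_{X_y}$ with $\{t_n\}\in\mathfrak N_y^{+\infty}$, then for $x\in X_y$ continuity of $h$ gives $h(\xi(x))=\lim_n h(\pi(t_n,x))=\lim_n\sigma(t_n,y)=y$, so $\xi(x)\in X_y$ and thus $E_y^{+}\subseteq X_y^{X_y}$.

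For nonemptiness I would use that $y$ is Poisson stable, so $\mathfrak N_y^{+\infty}\neq\emptyset$; fix $\{t_n\}\in\mathfrak N_y^{+\infty}$. Because $\sigma(t_n,y)\to y$, the set $K:=\overline{\{\sigma(t_n,y):n\}\cup\{y\}}$ is compact and all the maps $\pi^{t_n}|_{X_y}$ take values in the compact set $h^{-1}(K)$; extracting a pointwise convergent subsequence (via a diagonal procedure over a countable dense subset of $X_y$ together with conditional compactness) produces an element $\xi\in E_y^{+}$. Compactness of $E_y^{+}$ I would obtain by showing it is closed in the compact space $X_y^{X_y}$: a limit of elements of $E_y^{+}$ is again a limit of the maps $\pi^{t}|_{X_y}$ along a sequence that, using Poisson stability, can be arranged to lie in $\mathfrak N_y^{+\infty}$.

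Finally, the semigroup property. Given $\xi=\lim_n\pi^{t_n}|_{X_y}$ and $\eta=\lim_m\pi^{s_m}|_{X_y}$ with both sequences in $\mathfrak N_y^{+\infty}$, I would use the group law $\pi^{t+s}=\pi^{t}\circ\pi^{s}$ and the continuity of $\pi^{t_n}$ to write, for each fixed $n$, $\pi^{t_n}\circ\eta=\lim_m\pi^{t_n+s_m}|_{X_y}$, and then $\xi\circ\eta=\lim_n(\pi^{t_n}\circ\eta)$. A diagonal extraction would then yield a single sequence $r_k\to+\infty$ realising $\xi\circ\eta=\lim_k\pi^{r_k}|_{X_y}$; since $\sigma(r_k,y)=\sigma(t_{n(k)},\sigma(s_{m(k)},y))$ and $\sigma$ is continuous, Poisson stability lets me keep $\sigma(r_k,y)\to y$, so $\{r_k\}\in\mathfrak N_y^{+\infty}$ and $\xi\circ\eta\in E_y^{+}$. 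I expect the main obstacle to be exactly this last diagonalisation: converting the iterated double limit $\lim_n\lim_m\pi^{t_n+s_m}|_{X_y}$ into a limit along a single sequence that stays inside $\mathfrak N_y^{+\infty}$, since pointwise convergence takes place in the non-metrizable space $X_y^{X_y}$, so the extraction must be coordinated with the right-topological structure and with the return property $\sigma(t_n,y)\to y$ rather than carried out naively.
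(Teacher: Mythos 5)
The paper itself offers no proof of Lemma \ref{l109.2.4} (it is quoted from \cite[ChIX,XV]{Che_2015}), so your attempt can only be measured against the standard Ellis-semigroup argument --- and your architecture is exactly that argument: Tychonoff compactness of $X_y^{X_y}$, its right-topological structure, the verification $h(\xi(x))=y$ via continuity of $h$, confinement of the maps $\pi^{t_n}|_{X_y}$ to the compact set $(h^{-1}(K))^{X_y}$ with $K=\{\sigma(t_n,y)\}\cup\{y\}$, and the double-limit computation $\xi\circ\eta=\lim_n\lim_m\pi^{t_n+s_m}|_{X_y}$. The genuine gap is that all three conclusions (nonempty, compact, semigroup) are made to rest on sequence-extraction steps that fail in the non-metrizable space $X_y^{X_y}$, and the one concrete device you offer is incorrect: a diagonal procedure over a countable dense subset $D\subset X_y$ yields a subsequence converging pointwise on $D$ only; since the family $\{\pi^{t_n}|_{X_y}\}$ is not equicontinuous, this does not extend to pointwise convergence on $X_y$. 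No general principle can replace it: $X_y^{X_y}$ is not sequentially compact when $X_y$ is uncountable (no subsequence of $f_n(x)=\sin(2\pi nx)$ converges pointwise on $[0,1]$), so the cluster point $\xi$ you produce is a subnet limit, which is not enough for membership in $E_y^{+}$ as defined through sequences. The same defect recurs in the compactness step, where ``can be arranged to lie in $\mathfrak N_y^{+\infty}$'' is precisely what has to be proved (in a non-first-countable space the set of sequential limits of a family need not be closed), and in the semigroup step, where you candidly name the diagonalisation of the iterated limit as the main obstacle but do not carry it out --- the usual diagonal trick for double limits is a first-countability argument and is unavailable here.

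There are two honest ways to close the gap. (i) Work with nets/subnets throughout: a subnet of a sequence from $\mathfrak N_y^{+\infty}$ still satisfies $t_\alpha\to+\infty$ and $\sigma(t_\alpha,y)\to y$, conditional compactness still confines everything to $(h^{-1}(K))^{X_y}$, nonemptiness becomes a subnet extraction, closedness is checked against net limits (passing to a product directed set), and your double-limit computation becomes legitimate by the iterated-limit theorem for nets; this proves the lemma with $E_y^{+}$ read as a set of net limits, which is how the statement is actually used. (ii) Keep sequences but add the hypothesis under which the paper invokes the lemma (Lemma \ref{l10G3}): if $\lim_{t\to+\infty}\rho(\pi(t,x_1),\pi(t,x_2))=0$ for all $x_1,x_2\in X_y$, then choosing a subsequence along which $\{\pi(t_n,x_0)\}$ converges for a single point $x_0\in X_y$ (possible, since these values lie in the compact metric set $h^{-1}(K)$) already forces pointwise convergence on all of $X_y$, and your extraction becomes valid. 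As written, however, your proof does not establish the lemma in its stated sequential form.
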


\begin{lemma}\label{l10G3}\cite[ChIX,XV]{Che_2015}
Let $X$ be a conditionally compact metric space and
$\langle (X,\mathbb T_{1},\pi ),$ $(Y,\mathbb T_2,\sigma),h\rangle
$ be a non-autonomous dynamical system. Suppose that the following
conditions are fulfilled:
\begin{enumerate}
\item The point $y \in Y$ is Poisson stable; \item $\lim
\limits_{t\to +\infty}\rho (\pi(t,x_1),\pi(t,x_2))=0$ for all
$x_1,x_2\in X_{y}:=h^{-1}(y)=\{x\in X\ :\ h(x)=y \}.$
\end{enumerate}

Then there exists a unique point $x_{y}\in X_{y}$ such that
$\xi(x_{y})=x_{y}$ for all $\xi \in  E_{y}^{+}.$
\end{lemma}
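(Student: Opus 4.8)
The plan is to combine the semigroup structure of $E_y^+$ provided by Lemma~\ref{l109.2.4} with the forward contraction on the fiber expressed by hypothesis (2). Throughout, fix $x_0\in X_y$ and recall that $X_y$ is compact (conditional compactness of $X$), that $E_y^+$ is a nonempty compact sub-semigroup of $X_y^{X_y}$, and that every $\xi\in E_y^+$ arises as a pointwise limit $\xi=\lim_n\pi^{t_n}|_{X_y}$ for some $\{t_n\}\in\mathfrak N_y^{+\infty}$, so that $t_n\to+\infty$ and $\sigma(t_n,y)\to y$.

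First I would show that every element of $E_y^+$ is a constant map. Indeed, if $\xi=\lim_n\pi^{t_n}|_{X_y}$ with $t_n\to+\infty$, then for arbitrary $x_1,x_2\in X_y$ condition (2) gives $\rho(\pi(t_n,x_1),\pi(t_n,x_2))\to 0$, whence $\xi(x_1)=\xi(x_2)$. Denote by $c(\xi)\in X_y$ this common value, so $\xi(x)\equiv c(\xi)$ on $X_y$. Since $\xi(p)=p$ now means exactly $p=c(\xi)$, a point $p$ is a common fixed point of all $\xi\in E_y^+$ if and only if $c(\xi)=p$ for every $\xi$. In particular uniqueness is immediate (two common fixed points both equal $c(\xi)$ for one fixed $\xi$), and the whole existence statement reduces to the single claim that $c(\xi)$ is independent of $\xi$, i.e. that $E_y^+$ consists of one constant map.

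The crux is therefore to prove $c(\xi)=c(\eta)$ for all $\xi,\eta\in E_y^+$. Write $a:=c(\xi)$ and $b:=c(\eta)$, with $\xi=\lim_n\pi^{s_n}|_{X_y}$ and $\eta=\lim_m\pi^{t_m}|_{X_y}$. Using the flow identity $\pi(s_n+t_m,\cdot)=\pi(s_n,\pi(t_m,\cdot))=\pi(t_m,\pi(s_n,\cdot))$ and continuity of each time-$t$ map, the two iterated limits of the double family $\pi(s_n+t_m,x_0)$ compute to $\lim_n\lim_m\pi(s_n+t_m,x_0)=\lim_n\pi(s_n,b)=\xi(b)=a$ and $\lim_m\lim_n\pi(s_n+t_m,x_0)=\lim_m\pi(t_m,a)=\eta(a)=b$. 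Thus $a=b$ will follow once the two iterated limits are shown to coincide. I expect this interchange to be the principal obstacle: the enveloping semigroup $E_y^+$ is only right-topological, so commuting the limits is not formal, and the pointwise convergence in (2) does not by itself control the difference $\rho(\pi(s_n,\pi(t_m,x_0)),\pi(s_n,b))$, because the shifted points $\pi(t_m,x_0)\in X_{\sigma(t_m,y)}$ and $b\in X_y$ lie in different fibers while $s_n\to+\infty$.

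To overcome this I would upgrade (2) to an equi-asymptotic statement on the compact fiber, namely $\sup_{x_1,x_2\in X_y}\rho(\pi(t,x_1),\pi(t,x_2))\to 0$ as $t\to+\infty$, obtained by a compactness argument that exploits conditional compactness of $X$ together with Poisson stability of $y$ to bring the fibers $X_{\sigma(t,y)}$ back near $X_y$ along $\mathfrak N_y^{+\infty}$. With such uniformity the two iterated limits agree and $a=b$ follows; this uniformity step is where the real analytic work sits, since continuous dependence on initial data is only available on bounded time intervals and must be promoted across the recurrence times $t_n\to+\infty$. Once $c(\xi)$ is seen to be independent of $\xi$, its common value $x_y$ is, by the reduction above, the unique common fixed point of every $\xi\in E_y^+$, which completes the proof.
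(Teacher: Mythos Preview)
The paper does not supply a proof of this lemma; it is quoted from \cite[ChIX,XV]{Che_2015} without argument. So there is no in-paper proof to compare against, and I can only comment on the soundness of your proposal.

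Your first reduction is correct and cleanly done: hypothesis~(ii) together with $t_n\to+\infty$ forces every $\xi\in E_y^+$ to be a constant map, and the lemma is then equivalent to showing that the common value $c(\xi)$ is independent of $\xi$. Uniqueness is immediate once existence is known, exactly as you say.

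The genuine gap sits precisely where you locate it. Your route to $c(\xi)=c(\eta)$ is to equate the two iterated limits of $\pi(s_n+t_m,x_0)$, and you propose to justify the interchange by upgrading~(ii) to the uniform statement
\[
\sup_{x_1,x_2\in X_y}\rho(\pi(t,x_1),\pi(t,x_2))\to 0\quad (t\to+\infty).
\]
But you do not prove this upgrade; you only gesture at ``a compactness argument that exploits conditional compactness of $X$ together with Poisson stability of $y$'' and concede that ``this uniformity step is where the real analytic work sits''. Compactness of $X_y$ alone does not promote pointwise convergence of $\rho(\pi(t,x_1),\pi(t,x_2))$ to $0$ into uniform convergence: there is no monotonicity available for a Dini-type argument, and no equicontinuity of the family $\{\pi^t|_{X_y}\}_{t\ge 0}$ is assumed or derived. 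Nor do you explain concretely how Poisson stability of $y$ supplies the missing equicontinuity along the recurrence times. As written, the argument stops at exactly the step that carries the content of the lemma.

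It is also worth noting that the semigroup structure you invoke at the outset cannot, by itself, close the gap. From your step~1 one computes $(\xi\circ\eta)(x)=\xi(c(\eta))=c(\xi)$, so $\xi\circ\eta=\xi$ for all $\xi,\eta\in E_y^+$; thus $E_y^+$ is a left-zero band in which every element is idempotent. Such semigroups can have arbitrary cardinality, so no purely algebraic manipulation of Lemma~\ref{l109.2.4} will force $|E_y^+|=1$. A genuinely dynamical ingredient---either the uniform contraction you posit (actually proved), or an alternative argument making essential use of the recurrence structure of $\mathfrak N_y^{+\infty}$---is required, and your proposal does not supply it.
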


\begin{remark}\label{r10G2} 1. If a point $x\in X$ is compatible by the character of
the recurrence with $y \in Y$ and $y$ is a stationary
(respectively, $\tau$-periodic, recurrent, Poisson stable) point,
then the point $x$ is so \cite{scher85}.

2. If a point $x\in X$ is strongly compatible by the character of
the recurrence with $y \in Y$ and $y$ is a stationary
(respectively, $\tau$-periodic, quasi periodic, Bohr almost
periodic, almost automorphic, recurrent in the sense of Birkhoff,
Levitan almost periodic, almost recurrent, strongly Poisson stable
and $H(y)$ is minimal, Poisson stable) point, then the point $x$
is so \cite{scher85}.
\end{remark}

\begin{coro}\label{cor10G1}  Let $X$ be a conditionally compact metric space and
$\langle (X,\mathbb T_{1},\pi ),$ $(Y,\mathbb T_2,\sigma),h\rangle
$ be a non-autonomous dynamical system. Suppose that the following
conditions are fulfilled:
\begin{enumerate}
\item The point $y \in Y$ is Poisson stable; \item $\lim
\limits_{t\to +\infty}\rho (\pi(t,x_1),\pi(t,x_2))=0$ for all
$x_1,x_2\in X_{y}:=h^{-1}(y)=\{x\in X\ :\ h(x)=y \}.$
\end{enumerate}
Then there exists a unique point $x_{y} \in X_{y}$ which is
compatible by the character of the recurrence with $y \in Y$ point
$x_{y}\in X_{y}$ such that
$$
\lim\limits_{t\to +\infty}\rho(\pi(t,x),\pi(t,x_{y}))=0
$$
for all $x\in X_{y}.$
\end{coro}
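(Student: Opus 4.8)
The plan is to deduce Corollary \ref{cor10G1} directly from Lemma \ref{l10G3}, since the two share hypotheses verbatim. Lemma \ref{l10G3} already furnishes a point $x_y\in X_y$ that is fixed by every element $\xi$ of the semigroup $E_y^{+}$. The corollary adds two claims about this point: first, that $x_y$ is compatible (comparable) by the character of recurrence with $y$, and second, the asymptotic convergence $\lim_{t\to+\infty}\rho(\pi(t,x),\pi(t,x_y))=0$ for every $x\in X_y$. I would establish these two assertions in turn, taking $x_y$ to be exactly the point produced by Lemma \ref{l10G3}.

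First I would prove the asymptotic stability statement. Fix $x\in X_y$ and consider any sequence $t_n\to+\infty$; by conditional compactness of $X$ and Poisson stability of $y$, after passing to a subsequence we may assume $\pi(t_n,\cdot)|_{X_y}\to\xi$ pointwise for some $\xi\in E_y^{+}$ (this is precisely how $E_y^{+}$ is defined, together with Lemma \ref{l109.2.4}). Then $\pi(t_n,x)\to\xi(x)$ and $\pi(t_n,x_y)\to\xi(x_y)=x_y$, the last equality by the fixed-point property. Now hypothesis (2) gives $\lim_{t\to+\infty}\rho(\pi(t,x),\pi(t,x_y))=0$, so along the subsequence $\rho(\xi(x),x_y)=\lim_n\rho(\pi(t_n,x),\pi(t_n,x_y))=0$; hence $\xi(x)=x_y$. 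Since every sequence tending to $+\infty$ has a subsequence along which $\pi(t_n,x)\to x_y$, and the whole expression $\rho(\pi(t,x),\pi(t,x_y))$ is controlled by hypothesis (2) independently of which $x$ we chose in $X_y$, the limit is $0$ as claimed. In fact the convergence to $0$ is immediate from hypothesis (2) once we know both $x,x_y\in X_y$, so this part is essentially a restatement.

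For compatibility by the character of recurrence, the natural route is to invoke Theorem \ref{thPC2}(1): I would verify that $H(x_y)\cap X_{y}$ is the single point $\{x_y\}$, which yields $\mathfrak N_{y}\subseteq\mathfrak N_{x_y}$, i.e. comparability in the sense of Definition (item a., via $\mathfrak N$-inclusion). To see that the fibre intersection is a singleton, suppose $p\in H(x_y)\cap X_y$, so $p=\lim_n\pi(t_n,x_y)$ for some $t_n\to+\infty$ with $\sigma(t_n,y)\to y$. Passing to a subsequence realizing some $\xi\in E_y^{+}$, we get $p=\xi(x_y)=x_y$ by the fixed-point property from Lemma \ref{l10G3}. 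Thus $H(x_y)\cap X_y=\{x_y\}$, and Theorem \ref{thPC2} delivers $\mathfrak N_{y}\subseteq\mathfrak N_{x_y}$, which is the required compatibility.

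The main obstacle I anticipate is bookkeeping around the two-sided versus positive-time definitions: the fixed-point property in Lemma \ref{l10G3} is phrased via $E_y^{+}$ built from $\mathfrak N_y^{+\infty}$ (positive times), whereas comparability (Definition, item a.) and the inclusion $\mathfrak N_y\subseteq\mathfrak N_{x_y}$ of Theorem \ref{thPC2} are stated for the full time set. I would need to check that, under Poisson stability of $y$ and conditional compactness, every relevant recurrence sequence can be arranged with $t_n\to+\infty$ (replacing $t_n$ by $t_n+s_n$ for a suitable auxiliary sequence, or using that $y$ being Poisson stable forces $0\in\overline{\{t:\ \sigma(t,y)\approx y,\ t>0\}}$), so that the $E_y^{+}$ fixed-point conclusion applies to the fibre-intersection argument above. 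Once that alignment is secured, everything else is a direct application of the already-proved Lemmas \ref{l109.2.4}, \ref{l10G3} and Theorem \ref{thPC2}.
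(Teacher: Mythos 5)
The paper itself offers no written proof of Corollary \ref{cor10G1} (it is presented as an immediate consequence of Lemmas \ref{l109.2.4} and \ref{l10G3}), so the only issue is whether your derivation is sound. Your architecture is the intended one: take $x_y$ to be the common fixed point of $E_y^{+}$ from Lemma \ref{l10G3}; the attraction statement is then literally hypothesis (2) applied to the pair $x,x_y\in X_y$. (Your preliminary detour is not only unnecessary but slightly wrong: for an arbitrary sequence $t_n\to+\infty$, without $\sigma(t_n,y)\to y$ the pointwise limit of $\pi(t_n,\cdot)|_{X_y}$ need not lie in $E_y^{+}$, nor even map $X_y$ into $X_y$; since you discard it, nothing is lost.) The genuine gap is exactly the one you flagged and did not close: in the singleton-fibre argument you assume every $p\in H(x_y)\cap X_y$ arises as $p=\lim_n\pi(t_n,x_y)$ with $t_n\to+\infty$, but $H(x_y)$ also contains limits along bounded sequences and, when $\mathbb T_1=\mathbb T$, along sequences $t_n\to-\infty$. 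This case is not academic: the paper applies the corollary, inside the proof of Theorem \ref{thB2}, to the two-sided system $(\Phi(M),\mathbb T,\sigma)$, so negative-time sequences really do occur, and for them the $E_y^{+}$ fixed-point property says nothing directly. As written, the hypothesis of the first statement of Theorem \ref{thPC2} is therefore not verified; you also never address the uniqueness assertion of the corollary.

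The gap is closable, and essentially by the device you hint at ($t_n\mapsto t_n+s_m$). First, as you observe, hypothesis (2) forces every $\xi\in E_y^{+}$ to be the constant map with value $\xi(x_y)=x_y$; hence $\pi(s_n,x)\to x_y$ for every $x\in X_y$ and every $\{s_n\}\in\mathfrak N_y^{+\infty}$. Now let $\{t_n\}\in\mathfrak N_y$ be arbitrary and let $\pi(t_{n_k},x_y)\to p$ along a subsequence; by Poisson stability of $y$ fix some $\{s_m\}\in\mathfrak N_y^{+\infty}$. For each $k$, since $\pi(s_m,x_y)\to x_y$ and $\sigma(s_m,y)\to y$ as $m\to\infty$, continuity of $\pi(t_{n_k},\cdot)$ and $\sigma(t_{n_k},\cdot)$ allows a choice of $m_k$ with $r_k:=t_{n_k}+s_{m_k}>k$,
$\rho_Y\bigl(\sigma(r_k,y),\sigma(t_{n_k},y)\bigr)<1/k$ and
$\rho\bigl(\pi(r_k,x_y),\pi(t_{n_k},x_y)\bigr)=\rho\bigl(\pi(t_{n_k},\pi(s_{m_k},x_y)),\pi(t_{n_k},x_y)\bigr)<1/k$.
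Then $\{r_k\}\in\mathfrak N_y^{+\infty}$, so $\pi(r_k,x_y)\to x_y$, while $\rho(\pi(r_k,x_y),\pi(t_{n_k},x_y))\to 0$; hence $p=x_y$. This yields $\mathfrak N_y\subseteq\mathfrak N_{x_y}$ directly (so Theorem \ref{thPC2} can be bypassed entirely, or, if you prefer, it gives $H(x_y)\cap X_y=\{x_y\}$ and you then invoke it). Uniqueness is one more line: if $x'\in X_y$ is compatible with $y$, then $\pi(s_m,x')\to x'$, while hypothesis (2) gives $\rho(\pi(s_m,x'),\pi(s_m,x_y))\to 0$ and $\pi(s_m,x_y)\to x_y$, so $x'=x_y$. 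With these additions your proof is complete; without them, the step you dismissed as bookkeeping is in fact the only mathematical content of the corollary beyond Lemma \ref{l10G3} itself, and it cannot be left unverified.
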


\begin{coro}\label{cor10G2} Let $y\in Y$ be a stationary
$($respectively, $\tau$-periodic, recurrent, Poisson stable$)$
point. Then under the conditions of Corollary $\ref{cor10G1}$
there exists a unique stationary $($respectively, $\tau$-periodic,
recurrent, Poisson stable$)$ point $x_{y}\in X_{y}$ such that
$$
\lim\limits_{t\to +\infty}\rho(\pi(t,x),\pi(t,x_{y}))=0
$$
for all $x\in X_{y}.$
\end{coro}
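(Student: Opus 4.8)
The plan is to avoid building anything new and instead read the point $x_y$ straight out of Corollary \ref{cor10G1}, then upgrade its recurrence type using the compatibility that corollary already guarantees. Under the stated hypotheses Corollary \ref{cor10G1} furnishes a point $x_y\in X_y$ that is compatible by the character of recurrence with $y$ and satisfies $\lim_{t\to+\infty}\rho(\pi(t,x),\pi(t,x_y))=0$ for every $x\in X_y$. This is the candidate. The transfer of the recurrence type is then immediate and uniform in all four cases: since $x_y$ is compatible by the character of recurrence with $y$, and $y$ is assumed stationary (respectively $\tau$-periodic, recurrent, Poisson stable), the first assertion of Remark \ref{r10G2} says precisely that $x_y$ inherits the same property. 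Note that the list of four properties in Remark \ref{r10G2} matches the four cases of the corollary exactly, so no case analysis is needed for existence.

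The step requiring genuine care is uniqueness, and here a warning is in order. The second hypothesis of Corollary \ref{cor10G1} holds for \emph{every} pair of points of the fiber $X_y$, so the displayed limit is satisfied by all points of $X_y$ and therefore cannot by itself distinguish $x_y$. Uniqueness must instead be drawn from the recurrence structure. Given two candidate points $x_1,x_2\in X_y$ of the same type, that hypothesis yields $\lim_{t\to+\infty}\rho(\pi(t,x_1),\pi(t,x_2))=0$. In the stationary and $\tau$-periodic cases the function $t\mapsto\rho(\pi(t,x_1),\pi(t,x_2))$ is constant, respectively $\tau$-periodic, so a limit equal to $0$ forces it to vanish identically and hence $x_1=x_2$. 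For the recurrent and Poisson stable cases I would verify that any such candidate is again compatible by the character of recurrence with $y$ — exploiting Poisson recurrence of the point along a sequence $t_n\to+\infty$ together with the asymptotic hypothesis and the inclusion $\mathfrak N_y\subseteq\mathfrak N_{x_y}$ — and then invoke the uniqueness of the compatible point already established in Corollary \ref{cor10G1}.

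The main obstacle is exactly this last step: the limit condition degenerates (it holds for the whole fiber), so uniqueness in the recurrent and Poisson stable subcases cannot be extracted from it directly and must be rerouted through compatibility. The stationary and periodic subcases close by the elementary constant/periodic-tending-to-zero argument, but the Poisson stable subcase needs the return times $t_n\to+\infty$ to be played against the second hypothesis of Corollary \ref{cor10G1}; the cleanest way to finish is to fold the candidate point back into the hypotheses of that corollary and appeal to the uniqueness already proved there.
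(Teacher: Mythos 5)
Your proposal is correct, and its existence half is exactly the paper's (implicit) argument: the paper states this corollary without any proof, and---as with the analogous Corollary \ref{cor10B}, whose proof reads ``follows from Theorem \ref{thB2} and Remark \ref{r10G2}''---the intended justification is precisely ``Corollary \ref{cor10G1} plus item 1 of Remark \ref{r10G2}'', which is what you do. Where you go beyond the paper is the uniqueness discussion, and your warning is well taken: since hypothesis (ii) of Corollary \ref{cor10G1} makes the displayed limit hold for \emph{every} point of the fiber $X_y$, uniqueness is really a statement about points of the stated recurrence type, and the paper offers no argument for it. Your constant/periodic observation settles the first two cases, and the three ingredients you name do settle the recurrent and Poisson stable cases---but they should be assembled directly, not in the two-step form you describe. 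Showing that an arbitrary Poisson stable candidate $x'\in X_y$ is compatible with $y$ is already tantamount to showing $x'=x_y$: compatibility would force $\pi(s_n,x')\to x'$ along every $\{s_n\}\in\mathfrak N_y$ with $s_n\to+\infty$, while the asymptotic hypothesis forces $\pi(s_n,x')\to x_y$ along any such sequence; so ``verify compatibility, then invoke the uniqueness of the compatible point'' is circular as a plan. The direct route with your ingredients: take $t_n\to+\infty$ with $\pi(t_n,x')\to x'$; applying $h$ gives $\sigma(t_n,y)\to y$, so $\{t_n\}\in\mathfrak N_y^{+\infty}$; the compatibility of $x_y$ (equivalently, the fixed-point property of Lemma \ref{l10G3}) gives $\pi(t_n,x_y)\to x_y$; and hypothesis (ii) of Corollary \ref{cor10G1} gives $\rho(\pi(t_n,x'),\pi(t_n,x_y))\to 0$, whence $x'=x_y$. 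This single argument also subsumes the stationary, $\tau$-periodic and recurrent cases, since all such points are positively Poisson stable.
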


Let $x_0\in X$. Denote by $\omega_{x_0}:=\bigcap\limits_{t\ge
0}\overline{\bigcup\{\pi(\tau,x_0):\ \tau \ge t\}}$ the
$\omega$-limit set of $x_0$.

\begin{lemma}\label{l3.12}
Let $\langle (X,\mathbb T_{+} ,\pi),$ $(Y,\mathbb T,\sigma)$
$,h\rangle $ be a nonautonomous dynamical system and
$\Sigma_{x_0}^{+}:=\{\pi(t,x_0):\ t\ge 0\}$ be conditionally
precompact. Then for any $x\in \omega_{x_0}$ there exists at least
one entire trajectory of dynamical system $(X,\mathbb T_{+} ,\pi
)$ passing through the point $x$ for $t=0$ and $ \gamma (\mathbb
T\Omega)\subseteq \omega_{x_0}\ ( \gamma (\mathbb T):=\{\gamma
(t)|\ t\in \mathbb T \})$.
\end{lemma}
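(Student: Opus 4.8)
The plan is to realize the point $x$ as a limit along times tending to $+\infty$, and then to reconstruct the whole two-sided trajectory through $x$ by a diagonal procedure applied to the backward-shifted positive trajectories of $x_0$, with conditional precompactness serving as the only source of compactness.

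First I would fix $x\in\omega_{x_0}$. By the definition of $\omega_{x_0}$ there is a sequence $\{t_n\}\subset\mathbb T_{+}$ with $t_n\to+\infty$ and $\pi(t_n,x_0)\to x$. Put $y_0:=h(x_0)$ and $q:=h(x)$; by continuity of $h$ we get $\sigma(t_n,y_0)\to q$. For each $n$ I introduce the shifted trajectory $\gamma_n(t):=\pi(t+t_n,x_0)$, defined for all $t\ge -t_n$, so that $\gamma_n(0)=\pi(t_n,x_0)\to x$ and $\gamma_n(s+t)=\pi(s,\gamma_n(t))$ whenever $s\ge 0$ and $t+t_n\ge 0$. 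As $t_n\to+\infty$, the domains of the $\gamma_n$ eventually cover any prescribed $t\in\mathbb T$.

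Next I would produce a limiting trajectory by diagonalization. Fix $t\in\mathbb T$. For all large $n$ one has $t+t_n\ge 0$, so $\gamma_n(t)\in\Sigma_{x_0}^{+}$ and $h(\gamma_n(t))=\sigma(t+t_n,y_0)=\sigma(t,\sigma(t_n,y_0))\to\sigma(t,q)$; hence the $h$-images lie in the compact set $\{\sigma(t,\sigma(t_n,y_0)):\ n\in\mathbb N\}\cup\{\sigma(t,q)\}$. Since $\Sigma_{x_0}^{+}$ is conditionally precompact, the sequence $\{\gamma_n(t)\}$ is precompact in $X$. Choosing a countable dense set $D\subseteq\mathbb T$ with $0\in D$ (for $\mathbb T=\mathbb Z$ take $D=\mathbb Z$, for $\mathbb T=\mathbb R$ take $D=\mathbb Q$), I extract a single subsequence $\{t_{n_j}\}$ along which $\gamma_{n_j}(t)$ converges for every $t\in D$, and set $\gamma(t):=\lim_{j\to\infty}\gamma_{n_j}(t)$ on $D$. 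The semigroup relation passes to the limit: for $s\in D$ with $s\ge 0$ and $t\in D$, continuity of $\pi$ gives $\pi(s,\gamma(t))=\lim_j\pi(s,\gamma_{n_j}(t))=\lim_j\gamma_{n_j}(s+t)=\gamma(s+t)$. When $\mathbb T=\mathbb R$ I extend $\gamma$ to all of $\mathbb T$ by $\gamma(t):=\pi(t-r,\gamma(r))$ for any rational $r<t$; this is independent of $r$ by the relation just established, is continuous, and satisfies $\gamma(s+t)=\pi(s,\gamma(t))$ for all $s\ge 0,\ t\in\mathbb T$, so $\gamma$ is an entire trajectory with $\gamma(0)=x$. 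For $t\in D$ the times $t+t_{n_j}\to+\infty$ and $\pi(t+t_{n_j},x_0)\to\gamma(t)$ force $\gamma(t)\in\omega_{x_0}$, and for general $t$ I write $\gamma(t)=\pi(t-r,\gamma(r))$ with $t-r\ge 0$ and use positive invariance (and closedness) of $\omega_{x_0}$ to conclude $\gamma(\mathbb T)\subseteq\omega_{x_0}$.

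The main obstacle is the mismatch between the one-sided time semigroup $\mathbb T_{+}$ and the required two-sided trajectory: since $\pi$ cannot be run backward, the negative-time values of $\gamma$ must be manufactured as genuine limits of forward orbit points $\pi(t+t_{n_j},x_0)$ with $t<0$. Making this rigorous hinges on two points: controlling the base component through $h$ and $\sigma$ so that conditional precompactness can be applied fiber by fiber, and verifying that the semigroup identity survives both the diagonal limit and the continuous extension from $D$ to $\mathbb T$. Once these are secured, the inclusion $\gamma(\mathbb T)\subseteq\omega_{x_0}$ follows almost immediately.
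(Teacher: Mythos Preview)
Your argument is correct, but it follows a genuinely different path from the paper's. The paper proves the lemma by an Arzel\`a--Ascoli argument: after defining the shifted curves $\gamma_n$ (extended by the constant $x_0$ on $(-\infty,-t_n]$), it first establishes \emph{equicontinuity} of $\{\gamma_n\}$ on each $[-l,l]$ via a contradiction that again exploits conditional precompactness, then checks that $\{\gamma_n(t):t\in[-l,l],\,n\in\mathbb N\}$ is precompact, and finally extracts a subsequence converging in $C(\mathbb T,X)$ (compact--open topology) to the desired entire trajectory $\gamma$. Your route avoids the equicontinuity step altogether: you use conditional precompactness only pointwise, run a diagonal extraction on a countable dense additive set $D\subset\mathbb T$, pass the semigroup identity to the limit on $D$, and rebuild $\gamma$ on all of $\mathbb T$ by $\gamma(t)=\pi(t-r,\gamma(r))$.

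What each approach buys: the paper's Arzel\`a--Ascoli proof yields convergence of a subsequence of $\{\gamma_n\}$ in $C(\mathbb T,X)$, which is exactly the topology used later (e.g.\ in the shift dynamical system on $\Phi(M)$ in Lemma~\ref{l5.1*}); this makes the lemma mesh seamlessly with the surrounding machinery. Your argument is more elementary---it sidesteps the somewhat delicate equicontinuity verification---but it only delivers pointwise convergence on $D$, not uniform convergence on compacts. For the bare statement of Lemma~\ref{l3.12} this is perfectly adequate; if you later need the stronger mode of convergence, you would have to supplement your construction.
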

\begin{proof}
Let $ x\in \omega_{x_0}$, then there are $\{t_n\}\subset \mathbb T
$ such that $x=\lim \limits_{n \to \infty}\pi(t_n,x_0)$ and $t_n
\to +\infty $ as $n\to \infty$. We consider the sequence $
\{\gamma_n \}\subset C(\mathbb T,X)$ defined by equality
$$ \gamma_n (t)=\pi(t+t_n,x_0),\quad \mbox{if}\quad t\ge -t_n \quad
\mbox{and} \quad \gamma _n (t)=x_0 \quad \mbox{for}\quad  t\le
-t_n.$$

Let $l$ be an arbitrary positive number. We will prove that the
sequence $\{\gamma _n\}$ is equicontinuous on segment
$[-l,l]\subset \mathbb T.$ If we suppose that it is not true, then
there exist $\varepsilon _0, l_0>0, t^i_n \in [-l_0,l_0]$ and
$\delta _n \to 0 \quad (\delta _n
>0)$ such that
\begin{equation}\label{eq9.2.3}
\vert t^{1}_n-t^{2}_n\vert \le \delta _n \quad \mbox{and} \quad
\rho (\gamma _n (t^{1}_n ), \gamma _n (t^{2}_n ))\ge \varepsilon
_0.
\end{equation}
We may suppose that $t_n^{i}\to t_0 \quad (i=1,2).$ Since $t_n\to
+\infty$ as $n\to \infty$, then there exists a number $n_0\in
\mathbb N$ such that $t_n\ge l$ for any $n\ge n_0$. From
(\ref{eq9.2.3}) we obtain
\begin{equation}\label{eq9.2.4}
\varepsilon _0 \le \rho (\gamma _n (t_n^{1}),\gamma _n
(t_n^{2}))=\rho (
\pi(t_n^{1}+l_0,\pi(t_n-l_0,x_0)),\pi(t_n^{2}+l_0,\pi(t_n-l_0,x_0)))
\end{equation}
for any $ n \ge n_0$. Note that
$h(\pi(t_n-l_0,x_0))=\sigma(t_n-l_0,y_0)\to \sigma(y_0,-l_0)$ as
$n\to \infty$. Since $\Sigma_{x_0}^{+}$ is conditionally
precompact, then the sequence $\{\pi(t_n-l_0,x_0)\}$ is relatively
compact. Without loss of generality we can suppose that
$\{\pi(t_n-l_0,x_n)\}$ converges and denote by $\bar{x}$ its
limit. Passing into limit in (\ref{eq9.2.4}) as $n\to \infty$ and
taking into account above we obtain
\begin{equation}\label{eqC1_10}
\varepsilon _0 \le \rho (
\pi(t_0+l_0,\bar{x}),\pi(t_0+l_0,\bar{x}))=0.\nonumber
\end{equation}
The obtained contradiction proves our statement.

Now we will prove that the set $\{\gamma_n(t):\ t\in [-l,l],\ n\in
\mathbb N\}$ is precompact. To this end we note that for any $n\ge
n_0$ we have
$h(\gamma_n(t))=h(\pi(t+t_n,x_0))=\sigma(t,\sigma(t_n,y_0))$ and,
consequently, the set $K:=\{\sigma(t,\sigma(t_n,y_0)):\ t\in
[-l,l]\}\subset Y$ is precompact. Since the set $\Sigma_{x_0}^{+}$
is conditionally precompact and $\{\gamma_n(t):\ t\in
[-l,l]\}\subset h^{-1}(K)\bigcap \Sigma_{x_0}^{+}$, then the set
$\{\gamma_n(t):\ t\in [-l,l]\}$ is also precompact and,
consequently,  $\{\gamma _n\}$ is a relatively compact sequence of
$C(\mathbb T,X)$ since $\{\gamma_{n}\}$ is equicontinuous on
$[-l,l]$.

Let $\gamma $ be a limiting point of the sequence $\{\gamma _n\}$,
then there exists a subsequence $ \{\gamma _{k _n}\}$ such that $
\gamma (t)=\lim \limits _{n \to \infty} \gamma _{k_n} (t) $
uniformly on every segment $[-l,l]\subset \mathbb T.$ In
particular $ \gamma \in C(\mathbb T,X)$ and $\gamma(t)\in
\omega_{x_0}$ for any $t\in \mathbb T$ because
$\gamma(t)=\lim\limits_{n\to \infty}\pi(t+t_n,x_0)$. We note that
$$
\pi ^t\gamma(s)=\lim \limits _{n \to \infty}\pi^t\gamma _{k_n}(s)=
\lim \limits _{n \to \infty}\gamma _{k_n}(s+t)=\gamma (s+t)
$$
for all $t\in \mathbb T_{+}$ and $s\in \mathbb T$. Finally, we see
that $\gamma (0)=\lim \limits _{n \to \infty}\gamma _{k_n}(0)=
\lim \limits _{n\to \infty}\pi^{t_{k_n}}x_{0}=x,$ i.e., $\gamma$
is an entire trajectory of dynamical system $(X,\mathbb
T_{+},\pi)$ passing through point $x$. The Lemma is completely
proved.
\end{proof}

\section{Semi-group Dynamical Systems}

Let
$\langle(X,\mathbb{T}_{+},\pi),(Y,\mathbb{T},\lambda),h\rangle$
(respectively, $\langle W,\varphi,(Y,\mathbb{T},\lambda)\rangle$,
where $\varphi:\mathbb{T}_{+}\times W\times Y\mapsto W$) be a
semi-group non-autonomous dynamical system (respectively, a
semi-group cocycle), where $\mathbb{T}_{+}:=\{t\in\mathbb{T}\ |\
t\geq0\}$.

A continuous mapping $\gamma:\mathbb{T}\mapsto X$ (respectively,
$\nu:\mathbb{T}\mapsto W$) is called an entire trajectory of the
semi-group dynamical system $(X,\mathbb{T}_{+},\pi)$
(respectively, semi-group cocycle $\langle W,
\varphi,(Y,\mathbb{T},\lambda)\rangle$ or shortly $\varphi$)
passing through the point $x$ (respectively, $(u,y)$), if
$\gamma(0)=x$ (respectively, $\nu(0)=u$) and
$\pi(t,\gamma(s))=\gamma(t+s)$ (respectively,
$\varphi(t,\nu(s),\lambda(s,y))=\nu(t+s)$) for all
$t\in\mathbb{T}_{+}$ and $s\in\mathbb{T}$.

The entire trajectory $\gamma$ of the semigroup dynamical system
$(X,\mathbb{T}_{+},\pi)$ is said to be comparable with $y\in Y$ by
the character of recurrence ($(Y,\mathbb{T},\lambda)$ is a
two-sided
dynamical system) if $\mathfrak{N}_{y}\subseteq\mathfrak{N}_{\gamma}%
$, where $\mathfrak{N}_{\gamma}:=\{\{t_{n}\}\subset\mathbb{R}\ |\
$ such that the sequence $\{\gamma(t+t_{n})\}$ converges uniformly
with respect to $t$ on
every compact from $\mathbb{T}$, i.e. it converges in the space $C(\mathbb{T}%
,X)$.

\begin{remark}
\label{r5.21} Let
$\langle(X,\mathbb{T}_{+},\pi),\,(Y,\mathbb{T},\lambda ),h\rangle$
be a semi-group non-autonomous dynamical system, $M$ be subset of
$X$. Denote by $\tilde{M}:=\{x\in M\ |\ $
there exists at least one entire trajectory $\gamma$ of $(X,\mathbb{T}_{+}%
,\pi)$ passing through the point $x$ with condition $\gamma(\mathbb{T}%
)\subseteq M \}$. It is easy to see that the set $\tilde{M}$ is
invariant, i.e. $\pi(t,\tilde{M})=\tilde{M}$ for all
$t\in\mathbb{T}_{+}.$ Moreover, $\tilde{M}$ is the maximal
invariant set which is contained in $M$.
\end{remark}

Denote by $\Phi(M)$ the family of all entire trajectories $\gamma$
of a semi-group dynamical system $(X,\mathbb{T}_{+},\pi)$ with
condition $\gamma(\mathbb{T})\subseteq M$.

Let $\langle(X,\mathbb{T}_{+},\pi),\,(Y,\mathbb{T},\lambda
),h\rangle$ be a semi-group non-autonomous dynamical system,
$x_0\in X$, $y_0:=h(x_0)$ and $y\in \omega_{y_0}$. Denote by
$\Phi_{y}$ the family of all full trajectory $\gamma$ of
semi-group dynamical system $(X,\mathbb T_{+},\pi)$ satisfying the
condition: $h(\gamma(0))=y$ and $\gamma(\mathbb T)\subseteq
\omega_{x_0}$.

\begin{lemma}\cite{CC_2009}
\label{l5.1*} Assume that $\langle(X,\mathbb{T}_{+},\pi),(Y
,\mathbb{T},\lambda),h\rangle$ is a semi-group non-auto\-no\-mous
dynamical
system, $M$ is a conditionally compact subset of $X,$ and $\tilde{M}%
\not =\emptyset$. Then, the following statements hold:

\begin{enumerate}
\item the set $\tilde{M}$ is closed;

\item $\tilde{Y}:=h(\tilde{M})$ is a closed and invariant subset
of $(Y,\mathbb{T},\lambda);$

\item $\Phi(M)$ is a closed and shift invariant subset of
$C(\mathbb{T},M)$ and, consequently, on $\Phi(M)$ is induced a
shift dynamical system
$(\Phi(M),\mathbb{T},\sigma)$ by Bebutov's dynamical system $(C(\mathbb{T}%
,M),\mathbb{T},\sigma)$;

\item the mapping $H:\Phi(M)\mapsto\tilde{Y}$ defined by equality
$H(\gamma):=h(\gamma(0))$ is a homomorphism of the dynamical
system $(\Phi(M),\mathbb{T},\sigma)$ onto
$(\tilde{\Omega},\mathbb{T},\lambda)$, i.e. the map $H$ is
continuous and
\begin{equation}
\label{eq5.2*}H(\sigma(t,\gamma))=\lambda(t,H(\gamma))\nonumber
\end{equation}
for all $\gamma\in\Phi(M)$ and $t\in\mathbb{T}$;

\item the set $\Phi(M)$ is conditionally compact with respect to
$(\Phi(M),\tilde{Y},H)$;

\item if $\gamma_{1},\gamma_{2}\in\Phi(M)$ and
$h(\gamma_{1}(0))=h(\gamma _{2}(0))$, then the following
conditions are equivalent:

\begin{enumerate}
\item[a.]
$\lim\limits_{t\to+\infty}\rho(\gamma_{1}(t),\gamma_{2}(t))=0,$
where $\rho$ is the distance on $X$;

\item[b.] $\lim\limits_{t\to+\infty}d(\sigma(t,\gamma_{1}),\sigma
(t,\gamma_{2}))=0,$ where $d$ is the Bebutov's distance on
$\Phi(M)$.
\end{enumerate}
\end{enumerate}
\end{lemma}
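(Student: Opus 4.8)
The plan is to prove the six assertions in the order (3), (5), (1), (2), (4), (6): the only genuinely analytic statements are the closedness of $\Phi(M)$ in $C(\mathbb T,M)$ (item 3) and its conditional compactness over $(\Phi(M),\tilde Y,H)$ (item 5), and once these are in hand the remaining items reduce to bookkeeping about the homomorphism $h$ and to an elementary estimate for the Bebutov metric. For item (3) I would first note shift-invariance: for $\gamma\in\Phi(M)$ set $\gamma_\tau:=\sigma(\tau,\gamma)$, so $\gamma_\tau(t)=\gamma(t+\tau)$ and hence $\pi(t,\gamma_\tau(s))=\pi(t,\gamma(s+\tau))=\gamma(t+s+\tau)=\gamma_\tau(t+s)$ while $\gamma_\tau(\mathbb T)\subseteq\gamma(\mathbb T)\subseteq M$; thus $\gamma_\tau\in\Phi(M)$. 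Closedness is a pointwise passage to the limit: if $\gamma_n\to\gamma$ in $C(\mathbb T,M)$, then $\gamma(t)\in M$ for every $t$ (the limit is taken in $C(\mathbb T,M)$ and $M$, being conditionally compact, is closed), and the continuity of $\pi(t,\cdot)$ gives $\pi(t,\gamma(s))=\lim_n\pi(t,\gamma_n(s))=\lim_n\gamma_n(t+s)=\gamma(t+s)$, so $\gamma$ is again an entire trajectory contained in $M$, i.e. $\gamma\in\Phi(M)$.

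The core is item (5). Fix a compact $K\subseteq\tilde Y$ and put $\Phi_K:=H^{-1}(K)=\{\gamma\in\Phi(M):\ h(\gamma(0))\in K\}$; I would show $\Phi_K$ is precompact in $C(\mathbb T,X)$ by the Ascoli--Arzel\`a theorem and then use item (3) to upgrade to compactness. Pointwise precompactness is immediate: for $\gamma\in\Phi_K$ one has $\gamma(t)\in M\cap h^{-1}(\lambda(t,K))$, and since $\lambda(t,K)$ is compact this intersection is compact by the conditional compactness of $M$. Equicontinuity on each segment $[-l,l]$ I would establish exactly as in Lemma \ref{l3.12}, by contradiction: a failure produces $\gamma_n\in\Phi_K$, points $s_n^1,s_n^2\in[-l,l]$ with $|s_n^1-s_n^2|\to0$, $s_n^i\to s_0$, and $\rho(\gamma_n(s_n^1),\gamma_n(s_n^2))\ge\varepsilon_0$. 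Writing $\gamma_n(s_n^i)=\pi(s_n^i+l,\gamma_n(-l))$ (valid since $s_n^i+l\ge0$) and using that $\{\gamma_n(-l)\}\subseteq M\cap h^{-1}(\lambda(-l,K))$ is compact, so along a subsequence $\gamma_n(-l)\to\bar x$, the joint continuity of $\pi$ forces both arguments to converge to $\pi(s_0+l,\bar x)$, whence $\varepsilon_0\le\rho(\pi(s_0+l,\bar x),\pi(s_0+l,\bar x))=0$, a contradiction. The extracted limit $\gamma$ has $\gamma(t)\in M\cap h^{-1}(\lambda(t,K))$ and is an entire trajectory by the limit of the flow relation, and $h(\gamma(0))\in K$, so $\gamma\in\Phi_K$ and $\Phi_K$ is compact.

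Items (1) and (2) then follow quickly. For (1), a sequence $x_n\in\tilde M$ with $x_n\to x_0$ carries entire trajectories $\gamma_n\in\Phi(M)$, $\gamma_n(0)=x_n$; since $h(x_n)\to h(x_0)$ the $\gamma_n$ lie in a single $\Phi_K$, so by (5) a subsequence converges to some $\gamma\in\Phi(M)$ with $\gamma(0)=x_0$, whence $x_0\in\tilde M$. For (2), the set $\tilde Y=h(\tilde M)$ is invariant because every $x\in\tilde M$ carries an entire trajectory $\gamma$ in $M$ with $h(\gamma(t))=\lambda(t,h(x))\in\tilde Y$ for all $t\in\mathbb T$ (each shift $\sigma(t,\gamma)\in\Phi(M)$ passes through $\gamma(t)\in\tilde M$), giving $\lambda(\mathbb T,\tilde Y)\subseteq\tilde Y$ and hence, by the group property of $\lambda$, equality; and $\tilde Y$ is closed by the same extraction as in (1), now using conditional compactness of $\tilde M\subseteq M$ together with the closedness of $\tilde M$ already proved.

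Finally, item (4) is routine: $H$ is well defined onto $\tilde Y$ since $\gamma(0)\in\tilde M$ for $\gamma\in\Phi(M)$ and conversely every $x\in\tilde M$ equals $\gamma(0)$ for some $\gamma\in\Phi(M)$; continuity of $H$ follows from continuity of evaluation at $0$ and of $h$; and the intertwining $H(\sigma(t,\gamma))=h(\gamma(t))=\lambda(t,h(\gamma(0)))=\lambda(t,H(\gamma))$ holds because $h(\gamma(t))=\lambda(t,h(\gamma(0)))$ for every $t\in\mathbb T$ along an entire trajectory. Item (6) is an estimate for the Bebutov metric $d$: since $d$ dominates the value at $0$ once it is small, applying this to $f=\sigma(t,\gamma_1)$, $g=\sigma(t,\gamma_2)$ yields $\rho(\gamma_1(t),\gamma_2(t))=\rho(\sigma(t,\gamma_1)(0),\sigma(t,\gamma_2)(0))\to0$, i.e. (b)$\Rightarrow$(a); for (a)$\Rightarrow$(b) one fixes $\varepsilon>0$, chooses $L_0$ with $1/L_0<\varepsilon$, and observes that for large $t$ one has $\max_{|s|\le L_0}\rho(\gamma_1(t+s),\gamma_2(t+s))=\max_{\tau\in[t-L_0,t+L_0]}\rho(\gamma_1(\tau),\gamma_2(\tau))<\varepsilon$ by (a), so every term defining $d(\sigma(t,\gamma_1),\sigma(t,\gamma_2))$ is below $\varepsilon$. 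The one real obstacle is the equicontinuity estimate in (5); everything else is continuity of $\pi$, $h$, evaluation, and metric bookkeeping.
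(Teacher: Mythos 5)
Your proposal is correct: the paper itself states this lemma without proof (it is quoted from \cite{CC_2009}), and your argument --- pointwise passage to the limit for closedness and shift invariance, Ascoli--Arzel\`a combined with the conditional compactness of $M$ and the equicontinuity-by-contradiction step for item (v), and the elementary Bebutov-metric estimates for item (vi) --- is exactly the technique the paper employs in its own Lemma \ref{l3.12} and the one used in the cited source. One small point worth noting: in deducing item (i) you invoke item (v) for a compact set $K$ containing $h(x_0)$, which is not yet known to lie in $\tilde Y$ at that stage; this is harmless, since your proof of (v) never uses the inclusion $K\subseteq\tilde Y$ and applies verbatim to any compact $K\subseteq Y$.
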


\begin{theorem}\label{t5.1}\cite{CC_2009}
Let $\langle(X,\mathbb{T}_{+},\pi),$ $(Y,\mathbb{T}%
,\lambda),h\rangle$ be a semi-group non-autonomous dynamical
system. Suppose that the following conditions are fulfilled:

\begin{enumerate}
\item[a.] \textit{there exists a point } $x_{0}\in X$\textit{ such that }%
$H^{+}(x_{0})$\textit{ is conditionally compact;}

\item[b.] \textit{the point }$y_{0}:=h(x_{0})\in Y$\textit{ is
Poisson stable;}

\item[c.]
\begin{equation}\label{eq5.5}
\lim\limits_{t\rightarrow+\infty}\rho(\gamma_{1}(t),\gamma_{2}(t))=0
\nonumber%
\end{equation}
\textit{for all entire trajectories }$\gamma_{1}$\textit{ and }$\gamma_{2}%
$\textit{ of the semi-group dynamical system }$(X,\mathbb
T_{+},\pi)$\textit{ with the
conditions: }$\gamma_{i}(\mathbb T)\subseteq H^{+}(x_{0})$\textit{ and }%
$h(\gamma_{1}(0))=h(\gamma_{2}(0))=y_{0}$\textit{.}
\end{enumerate}

\textit{Then, there exists a unique entire trajectory }$\gamma\in
\Phi_{y_0}$\textit{ of }$(X,\mathbb T_{+},\pi)$\textit{ possessing
the following properties:}

\begin{enumerate}
\item $\gamma(\mathbb{T})\subseteq H^{+}(x_{0})$;

\item $h(\gamma(0))=y_{0}$;

\item $\gamma$ \textit{is comparable with} $y_{0}\in Y$ \textit{by
the character of recurrence}.
\end{enumerate}
\end{theorem}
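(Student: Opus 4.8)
The plan is to reduce the statement to the fibre fixed-point results (Lemma~\ref{l10G3} and Corollary~\ref{cor10G1}), applied not to the original system but to the induced shift dynamical system on the space of entire trajectories furnished by Lemma~\ref{l5.1*}. I set $M:=H^{+}(x_{0})$, which is conditionally compact by hypothesis~(a). First I would verify that $\tilde{M}\neq\emptyset$: since $y_{0}$ is Poisson stable there is $\{t_{n}\}$ with $t_{n}\to+\infty$ and $\sigma(t_{n},y_{0})\to y_{0}$, and by conditional compactness a subsequence of $\{\pi(t_{n},x_{0})\}$ converges, so $\omega_{x_{0}}\neq\emptyset$; then by Lemma~\ref{l3.12} every point of $\omega_{x_{0}}$ carries an entire trajectory lying in $\omega_{x_{0}}\subseteq M$, whence $\omega_{x_{0}}\subseteq\tilde{M}$.

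Applying Lemma~\ref{l5.1*} to $M$ produces the two-sided non-autonomous dynamical system $\langle(\Phi(M),\mathbb{T},\sigma),(\tilde{Y},\mathbb{T},\lambda),H\rangle$ with $H(\gamma):=h(\gamma(0))$, where $\Phi(M)$ is conditionally compact over $\tilde{Y}=h(\tilde{M})$ (part~5). Next I check that $y_{0}\in\tilde{Y}$: running the limit argument above shows $\omega_{y_{0}}\subseteq h(\omega_{x_{0}})\subseteq\tilde{Y}$, and $y_{0}\in\omega_{y_{0}}$ by Poisson stability, so $y_{0}$ is a Poisson stable point of $(\tilde{Y},\mathbb{T},\lambda)$. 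The fibre $H^{-1}(y_{0})$ is exactly the family of entire trajectories $\gamma$ with $\gamma(\mathbb{T})\subseteq M$ and $h(\gamma(0))=y_{0}$; by condition~(c) together with part~6 of Lemma~\ref{l5.1*}, one gets $\lim_{t\to+\infty}d(\sigma(t,\gamma_{1}),\sigma(t,\gamma_{2}))=0$ for all $\gamma_{1},\gamma_{2}\in H^{-1}(y_{0})$, where $d$ is the Bebutov metric. This is precisely hypothesis~2 of Corollary~\ref{cor10G1} for the induced system.

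Now I invoke Corollary~\ref{cor10G1} for this induced system: it yields a unique $\gamma\in H^{-1}(y_{0})$ that is comparable with $y_{0}$ by the character of recurrence and satisfies $\lim_{t\to+\infty}d(\sigma(t,\gamma),\sigma(t,\tilde{\gamma}))=0$ for every $\tilde{\gamma}$ in the fibre. This $\gamma$ is the required entire trajectory: $h(\gamma(0))=y_{0}$ gives property~(2); a short backward-orbit computation shows that any entire trajectory staying in $H^{+}(x_{0})$ in fact stays in $\omega_{x_{0}}$ (since $\gamma(0)=\pi(n,\gamma(-n))=\lim_{k}\pi(n+s_{k},x_{0})\in\overline{\{\pi(t,x_{0}):t\ge n\}}$ for all $n$), hence $\gamma\in\Phi_{y_{0}}$ and property~(1) holds. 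Finally, unwinding the Bebutov topology, comparability of $\gamma$ as a point of $\Phi(M)$ with $y_{0}$ is exactly $\mathfrak{N}_{y_{0}}\subseteq\mathfrak{N}_{\gamma}$ read in $C(\mathbb{T},X)$, which is property~(3). Uniqueness is inherited directly from the uniqueness clause of Corollary~\ref{cor10G1}: any entire trajectory satisfying (1)--(3) lies in $H^{-1}(y_{0})$ and is comparable with $y_{0}$, hence coincides with $\gamma$.

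The step I expect to be the main obstacle is the faithful transfer of condition~(c) --- stated on the phase space $X$ in the metric $\rho$ --- into the Bebutov-metric hypothesis on $\Phi(M)$ demanded by Corollary~\ref{cor10G1}; this is where part~6 of Lemma~\ref{l5.1*} does the essential work. One must also take care that the comparability delivered by Corollary~\ref{cor10G1} for points of the trajectory space $\Phi(M)$ coincides, after unwinding the shift dynamics and the definition of $\mathfrak{N}_{\gamma}$, with comparability of the corresponding entire trajectory with $y_{0}$ in the sense defined before the theorem.
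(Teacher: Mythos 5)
Your proposal is correct and follows essentially the same route the paper itself uses: reduce to the shift dynamical system on the trajectory space $\Phi(M)$, $M:=H^{+}(x_{0})$, via Lemmas \ref{l3.12} and \ref{l5.1*} (with part 6 transferring condition c.\ into the Bebutov metric), and then apply the fibre fixed-point result --- Corollary \ref{cor10G1}/Lemma \ref{l10G3}, which is exactly the right tool here since b.\ only gives Poisson stability, just as the paper's proof of the companion Theorem \ref{t5.2} invokes Theorem \ref{thB2} in the strongly Poisson stable/minimal case. Your additional observations (the backward-orbit argument showing any entire trajectory in $H^{+}(x_{0})$ lies in $\omega_{x_{0}}$, hence in $\Phi_{y_0}$, and the unwinding of comparability in $C(\mathbb{T},X)$) fill in precisely the steps the paper leaves implicit.
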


\begin{coro}\label{cor5.1**}\cite{CC_2009}
Assume the conditions of Theorem \ref{t5.1} hold and that
\begin{equation}
\lim\limits_{t\rightarrow+\infty}\rho(\gamma_{1}(t),\gamma_{2}(t))=0
\nonumber\label{eq5.51}%
\end{equation}
for all entire trajectories $\gamma_{1}$ and $\gamma_{2}$ of the
semi-group dynamical system $(X,\mathbb{T}_{+},$ $\pi)$ with the
conditions: $\gamma
_{i}(\mathbb{T})$ (i=1,2) is conditionally compact and $h(\gamma_{1}(0))=$ $h(\gamma_{2}%
(0))=$ $y_{0}$.

Then, there exists a unique entire trajectory $\gamma$ of $(X,\mathbb{T}%
_{+},\pi),$ which is comparable with $y_{0}\in Y$ by the character
of recurrence, and which satisfies the following properties:

\begin{enumerate}
\item $\gamma(\mathbb{T})$ is conditionally compact;

\item $h(\gamma(0))=y_{0}$.
\end{enumerate}
\end{coro}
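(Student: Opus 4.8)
The plan is to separate the statement into an existence claim and a uniqueness claim, to dispose of existence by directly invoking Theorem \ref{t5.1}, and to reduce the uniqueness to Corollary \ref{cor10G1} by lifting the whole problem to the space of entire trajectories. For existence I would simply apply Theorem \ref{t5.1}, whose hypotheses are assumed: it produces an entire trajectory $\gamma_0$ with $\gamma_0(\mathbb{T})\subseteq H^{+}(x_0)$, $h(\gamma_0(0))=y_0$, and $\gamma_0$ comparable with $y_0$ by the character of recurrence. Since $H^{+}(x_0)$ is conditionally compact by hypothesis (a) of Theorem \ref{t5.1}, the range $\gamma_0(\mathbb{T})$ is conditionally compact, so $\gamma_0$ already fulfills all requirements of the corollary. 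Hence only uniqueness calls for a genuine argument.

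For uniqueness, let $\gamma_1,\gamma_2$ be two entire trajectories, each with conditionally compact range, each comparable with $y_0$, and with $h(\gamma_1(0))=h(\gamma_2(0))=y_0$. I would set $M:=\overline{\gamma_1(\mathbb{T})}\cup\overline{\gamma_2(\mathbb{T})}$, which is conditionally compact and has $\tilde M\neq\emptyset$ (it contains both trajectories). By Lemma \ref{l5.1*} the set $\Phi(M)$ of entire trajectories lying in $M$ carries the shift dynamical system $(\Phi(M),\mathbb{T},\sigma)$, the map $H(\gamma):=h(\gamma(0))$ is a homomorphism onto $(\tilde Y,\mathbb{T},\lambda)$, and $\Phi(M)$ is conditionally compact with respect to $H$. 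Both $\gamma_1$ and $\gamma_2$ then belong to the fiber $H^{-1}(y_0)$, and I note that comparability of an entire trajectory $\gamma$ with $y_0$ in the sense $\mathfrak{N}_{y_0}\subseteq\mathfrak{N}_{\gamma}$ coincides exactly with comparability of $\gamma$ viewed as a point of the lifted system $\langle(\Phi(M),\mathbb{T},\sigma),(\tilde Y,\mathbb{T},\lambda),H\rangle$.

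The decisive step is to verify the two hypotheses of Corollary \ref{cor10G1} for this lifted system at the point $y_0$ and then read off uniqueness. Poisson stability of $y_0$ is hypothesis (b) of Theorem \ref{t5.1}. For the fiberwise asymptotic stability, I would take arbitrary $\delta_1,\delta_2\in H^{-1}(y_0)$; these are entire trajectories with conditionally compact range (contained in $M$) and $h(\delta_i(0))=y_0$, so the additional hypothesis of the corollary gives $\lim\limits_{t\to+\infty}\rho(\delta_1(t),\delta_2(t))=0$, and Lemma \ref{l5.1*}(6) converts this into $\lim\limits_{t\to+\infty}d(\sigma(t,\delta_1),\sigma(t,\delta_2))=0$ in $\Phi(M)$. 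Thus both conditions of Corollary \ref{cor10G1} hold for $H^{-1}(y_0)$, and the corollary yields a \emph{unique} point of this fiber comparable with $y_0$ by the character of recurrence. Since $\gamma_1$ and $\gamma_2$ are both comparable with $y_0$ and both lie in $H^{-1}(y_0)$, each must equal this unique comparable point, whence $\gamma_1=\gamma_2$.

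I expect the main obstacle to lie entirely in this decisive step: one must confirm that $M$ and $\Phi(M)$ are genuinely conditionally compact with $\tilde M\neq\emptyset$ so that Lemma \ref{l5.1*} is applicable, that the passage from asymptotic closeness in $(X,\rho)$ to asymptotic closeness in $(\Phi(M),d)$ via Lemma \ref{l5.1*}(6) legitimately uses the common base point $y_0$, and above all that the two a priori different notions of comparability (of the $\gamma_i$ as entire trajectories versus as points of the lifted system) are correctly identified, since the uniqueness furnished by Corollary \ref{cor10G1} is a uniqueness statement about the comparable point of the fiber in the lifted system rather than in $X$ itself.
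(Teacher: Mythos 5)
Your proposal is correct, and it is built from exactly the tools this paper uses for its parallel results. One caveat on the comparison: the paper gives no proof of Corollary \ref{cor5.1**} itself (it is quoted from \cite{CC_2009}), so the natural benchmark is the proofs the paper does supply for Theorem \ref{t5.2} and Corollary \ref{cor5.2**}, which rest on the same lifting device you employ: pass to the space $\Phi(M)$ of entire trajectories via Lemma \ref{l5.1*} and settle everything in the lifted non-autonomous system. Your existence step (a direct appeal to Theorem \ref{t5.1}) is clearly what is intended, and your choice $M:=\overline{\gamma_1(\mathbb T)}\cup\overline{\gamma_2(\mathbb T)}$ is precisely the ``slight modification'' needed for the genuinely new content of the corollary, namely uniqueness in the class of trajectories with conditionally compact range that need \emph{not} lie in $H^{+}(x_0)$; this is also exactly the point where the strengthened hypothesis is consumed, since the fiber $\Phi(M)_{y_0}$ contains trajectories that condition c.\ of Theorem \ref{t5.1} would not cover. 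Where you diverge from the paper is the mechanism for uniqueness: the paper (in the proof of Theorem \ref{t5.2}) argues directly --- choose $\{t_n\}\in\mathfrak N_{y_0}$ with $t_n\to+\infty$ (Poisson stability of $y_0$), use $\mathfrak N_{y_0}\subseteq\mathfrak N_{\gamma_i}$ and Lemma \ref{l5.1*}(6) to get $d(\gamma_1,\gamma_2)=\lim_{n\to\infty}d(\sigma(t_n,\gamma_1),\sigma(t_n,\gamma_2))=0$ --- whereas you route through Corollary \ref{cor10G1} applied to the lifted system over $M$. Both are valid; the direct computation is shorter and sidesteps the one delicate point you yourself flag, the identification of ``compatible'' in Corollary \ref{cor10G1} (defined via a continuous mapping of orbit closures) with the $\mathfrak N$-inclusion definition used for entire trajectories. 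That identification is legitimate and is how the paper itself uses Corollary \ref{cor10G1} inside the proof of Theorem \ref{thB2}; if you want it fully self-contained, note instead that $\mathfrak N_{y_0}\subseteq\mathfrak N_{\gamma_i}$ forces each $\gamma_i$ to be a fixed point of every $\xi\in E_{y_0}^{+}$ of the lifted system, and Lemma \ref{l10G3} says that fixed point is unique, giving $\gamma_1=\gamma_2$ without any translation between the two notions of comparability.
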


\begin{coro}\label{cor3.2*}\cite{CC_2009}
Let $y_{0}\in Y$ be a stationary ($\tau$-periodic, almost
automorphic, almost recurrent, Levitan almost periodic, Poisson
stable) point. Then under the conditions of Theorem \ref{t5.1}
there exists a unique stationary ($\tau$-periodic, almost
automorphic, almost recurrent, Levitan almost periodic, Poisson
stable) entire trajectory $\gamma$ of dynamical system
$(X,\mathbb{T}_{+},\pi)$ such that $\gamma(\mathbb{T})\subseteq
H^{+}(x_{0}).$
\end{coro}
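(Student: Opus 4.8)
The plan is to derive the corollary directly from Theorem \ref{t5.1}, using in addition the fact that comparability by the character of recurrence propagates each of the listed recurrence types; the needed propagation statements are Lemma \ref{l4.5*}, Corollary \ref{cor4.6*} and Remarks \ref{r4.1*}, \ref{r10G2}. First I would apply Theorem \ref{t5.1} to obtain the unique entire trajectory $\gamma\in\Phi_{y_0}$ of $(X,\mathbb{T}_{+},\pi)$ satisfying $\gamma(\mathbb{T})\subseteq H^{+}(x_{0})$, $h(\gamma(0))=y_{0}$, and comparable with $y_{0}$ by the character of recurrence, that is $\mathfrak{N}_{y_0}\subseteq\mathfrak{N}_{\gamma}$. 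Since each of the properties in the list (stationary, $\tau$-periodic, almost automorphic, almost recurrent, Levitan almost periodic, Poisson stable) implies Poisson stability of $y_0$, the hypotheses of Theorem \ref{t5.1} are available, so this first step is legitimate and already supplies existence and uniqueness of $\gamma$.

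Next I would realize $\gamma$ as a genuine point of the shift dynamical system $(\Phi(M),\mathbb{T},\sigma)$ with $M:=H^{+}(x_{0})$, furnished by Lemma \ref{l5.1*}, where $H:\Phi(M)\to\tilde{Y}$, $H(\gamma):=h(\gamma(0))$, is a homomorphism onto the base carrying $\gamma$ to $y_{0}$. In this setting the whole problem reduces to transferring the recurrence type of $y_{0}$ to the point $\gamma$ through the inclusion $\mathfrak{N}_{y_0}\subseteq\mathfrak{N}_{\gamma}$. The transfer I would then carry out case by case: for the Levitan almost periodic case the inclusion $\mathfrak{N}_{y_0}\subseteq\mathfrak{N}_{\gamma}$ is exactly the hypothesis of Lemma \ref{l4.5*}, yielding that $\gamma$ is Levitan almost periodic; for the almost automorphic case I would combine this with Lagrange stability of $\gamma$ in $(\Phi(M),\mathbb{T},\sigma)$—which follows from the conditional compactness of $\Phi(M)$ in Lemma \ref{l5.1*}(5) once $y_0$ is itself Lagrange stable—and invoke Corollary \ref{cor4.6*}. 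The stationary, $\tau$-periodic, almost recurrent and Poisson stable cases follow from the corresponding preservation statements recorded in Remarks \ref{r4.1*} and \ref{r10G2}, none of which requires compactness of the orbit closure. Uniqueness of $\gamma$ is inherited verbatim from Theorem \ref{t5.1}.

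The step I expect to be the main obstacle is making the inclusion $\mathfrak{N}_{y_0}\subseteq\mathfrak{N}_{\gamma}$ usable in the abstract lemmas. The comparability furnished by Theorem \ref{t5.1} only asserts that, for $\{t_n\}\in\mathfrak{N}_{y_0}$, the sequence $\{\sigma(t_n,\gamma)\}$ converges in $C(\mathbb{T},X)$, whereas Lemma \ref{l4.5*} and Corollary \ref{cor4.6*} require it to converge precisely to $\gamma$. To close this gap I would use the uniqueness half of Theorem \ref{t5.1}: if $\sigma(t_n,\gamma)\to\gamma'$, then $\gamma'$ is again an entire trajectory with $\gamma'(\mathbb{T})\subseteq H^{+}(x_{0})$ and $H(\gamma')=\lim_{n\to\infty}\sigma(t_n,y_0)=y_0$, whence $\gamma'=\gamma$ by uniqueness and the limit is forced to be $\gamma$ itself. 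This identifies the mere convergence encoded in $\mathfrak{N}_{\gamma}$ with genuine convergence to $\gamma$ along sequences from $\mathfrak{N}_{y_0}$, so that the abstract recurrence-transfer machinery applies and the corollary follows.
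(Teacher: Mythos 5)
Your overall strategy coincides with the route the paper itself intends (the paper delegates this corollary to \cite{CC_2009} and proves its analogues, e.g.\ Corollary \ref{cor3.2**} and Corollary \ref{cor_1}, by exactly this scheme): obtain the unique comparable trajectory from Theorem \ref{t5.1}, regard it as a point of the shift system $(\Phi(M),\mathbb{T},\sigma)$ furnished by Lemma \ref{l5.1*}, and transfer the recurrence of $y_{0}$ through the inclusion $\mathfrak{N}_{y_0}\subseteq\mathfrak{N}_{\gamma}$; your treatment of the Levitan and almost automorphic cases (including the derivation of Lagrange stability of $\gamma$ from Lemma \ref{l5.1*}, item (v), and the st.$L$ property of $y_0$) is correct. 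However, the almost recurrent case is not covered by the results you cite. Remark \ref{r10G2}, item 1 (simple compatibility) lists only stationary, $\tau$-periodic, recurrent and Poisson stable points; almost recurrence appears only in item 2, which presupposes \emph{strong} compatibility, i.e.\ $\mathfrak{M}_{y_0}\subseteq\mathfrak{M}_{\gamma}$ --- and Theorem \ref{t5.1} yields only the $\mathfrak{N}$-inclusion (the $\mathfrak{M}$-inclusion is what Theorem \ref{t5.2} provides, under additional hypotheses). Remark \ref{r4.1*} concerns homomorphic images and does not list almost recurrence either. The fact you need is true, but it requires its own short argument: the inclusion $\mathfrak{N}_{y_0}\subseteq\mathfrak{N}_{\gamma}$ implies that for every $\varepsilon>0$ there is $\delta>0$ such that every $\delta$-shift of $y_0$ is an $\varepsilon$-shift of $\gamma$ (otherwise choose $\tau_n$ with $\rho(\sigma(\tau_n,y_0),y_0)<1/n$ and $d(\sigma(\tau_n,\gamma),\gamma)\ge\varepsilon_0$; then $\{\tau_n\}\in\mathfrak{N}_{y_0}$ while $\sigma(\tau_n,\gamma)\not\to\gamma$, a contradiction), and relative density of the $\delta$-shifts of $y_0$ then forces relative density of the $\varepsilon$-shifts of $\gamma$.

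The second defect is that your resolution of the ``converges versus converges to $\gamma$'' issue is circular. The uniqueness clause of Theorem \ref{t5.1} identifies $\gamma$ only among entire trajectories satisfying all three properties (i)--(iii), comparability included; your limit $\gamma'$ is shown to satisfy (i) and (ii) only, and condition c.\ of the theorem gives merely $\rho(\gamma(t),\gamma'(t))\to 0$ as $t\to+\infty$, which is forward asymptotic identification and does not force $\gamma'=\gamma$. So ``by uniqueness'' is not available at that point. The gap can be closed without any appeal to uniqueness: interleave $\{t_n\}$ with the constant sequence $0$; the interleaved sequence still belongs to $\mathfrak{N}_{y_0}\subseteq\mathfrak{N}_{\gamma}$, hence the shifts of $\gamma$ along it converge in $C(\mathbb{T},X)$, and since the subsequence of zero times is constantly equal to $\gamma$, the whole interleaved sequence converges to $\gamma$, whence $\sigma(t_n,\gamma)\to\gamma$. (This also shows that the literal reading of $\mathfrak{N}_{\gamma}$ and the intended one --- the one the paper actually uses, e.g.\ in the uniqueness part of the proof of Theorem \ref{t5.2} --- agree for trajectories comparable with $y_0$, so the obstacle you flagged is an artifact of a typo in the definition rather than a real weakness of Theorem \ref{t5.1}.)
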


\begin{remark}\label{remC1} Theorem \ref{t5.1} and Corollaries
\ref{cor5.1**} and \ref{cor3.2*} remain true if we replace
condition (\ref{eq5.5}) by equality:
\begin{equation}\label{eqC10}
\lim\limits_{n\to
\infty}\rho(\gamma_1(t_n),\gamma_2(t_n))=0\nonumber
\end{equation}
for any $\gamma_1,\gamma_2\in \Phi_{y_0}$ and $\{t_n\}\in
\mathfrak N_{y_0}^{+\infty}$.
\end{remark}

\begin{theorem}\label{t5.1*}\cite{CC_2009}
Let $\langle(X,\mathbb{T}_{+},\pi),$ $(Y,\mathbb{T}%
,\lambda),h\rangle$ be a semi-group non-autonomous dynamical
system. Suppose that the following conditions are fulfilled:

\begin{enumerate}
\item[a.] \textit{there exists a point } $x_{0}\in X$\textit{ such that }%
$H^{+}(x_{0})$\textit{ is conditionally compact;}

\item[b.] \textit{the point }$y_{0}:=h(x_{0})\in Y$\textit{ is
Poisson stable;}

\item[c.]
\begin{equation}\label{eq5.5*}
\lim\limits_{t\rightarrow+\infty}\rho(\pi(t,x_1),\pi(t,x_2))=0
\nonumber%
\end{equation}
for any $x_1,x_2\in X_{y_0}$.
\end{enumerate}

\textit{Then, there exists a unique entire trajectory }$\gamma\in
\Phi_{y_0}$\textit{ of }$(X,\mathbb T_{+},\pi)$\textit{ possessing
the following properties:}

\begin{enumerate}
\item $\gamma(\mathbb{T})\subseteq H^{+}(x_{0})$;

\item $h(\gamma(0))=y_{0}$;

\item $\gamma$ \textit{is comparable with} $y_{0}\in Y$ \textit{by
the character of recurrence} and
\begin{equation}\label{eq5.5**}
\lim\limits_{t\rightarrow+\infty}\rho(\pi(t,x),\gamma(t))=0
\end{equation}
for any $x\in X_{y_0}$.
\end{enumerate}
\end{theorem}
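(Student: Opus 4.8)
The plan is to reduce Theorem \ref{t5.1*} to the already-proved Theorem \ref{t5.1}, by observing that hypothesis (c) here is strictly stronger than hypothesis (c) there, and that it simultaneously delivers the extra convergence relation in conclusion (3). Conditions (a) and (b) are literally the same in both statements, so only (c) needs attention.

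First I would verify that condition (c) of the present theorem implies condition (c) of Theorem \ref{t5.1}. Let $\gamma_1,\gamma_2$ be any two entire trajectories of $(X,\mathbb{T}_{+},\pi)$ with $\gamma_i(\mathbb{T})\subseteq H^{+}(x_0)$ and $h(\gamma_1(0))=h(\gamma_2(0))=y_0$. Then $\gamma_1(0),\gamma_2(0)\in X_{y_0}$, and since each $\gamma_i$ is an entire trajectory we have $\pi(t,\gamma_i(0))=\gamma_i(t)$ for all $t\in\mathbb{T}_{+}$. Applying hypothesis (c) to the pair $\gamma_1(0),\gamma_2(0)\in X_{y_0}$ gives $\lim_{t\to+\infty}\rho(\gamma_1(t),\gamma_2(t))=\lim_{t\to+\infty}\rho(\pi(t,\gamma_1(0)),\pi(t,\gamma_2(0)))=0$, which is exactly condition (c) of Theorem \ref{t5.1}. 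Hence Theorem \ref{t5.1} applies and furnishes a unique entire trajectory $\gamma\in\Phi_{y_0}$ satisfying properties (1), (2), and the comparability part of (3).

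It then remains only to establish the additional limit $\lim_{t\to+\infty}\rho(\pi(t,x),\gamma(t))=0$ for every $x\in X_{y_0}$. This follows at once from hypothesis (c): since $h(\gamma(0))=y_0$ we have $\gamma(0)\in X_{y_0}$, so for arbitrary $x\in X_{y_0}$ the pair $x,\gamma(0)$ lies in $X_{y_0}$ and therefore $\lim_{t\to+\infty}\rho(\pi(t,x),\pi(t,\gamma(0)))=0$; using $\pi(t,\gamma(0))=\gamma(t)$ this is the asserted relation. Uniqueness is inherited from Theorem \ref{t5.1}, because any trajectory meeting the present conclusion in particular satisfies (1), (2) and comparability, hence must coincide with the unique trajectory produced there, which we have just shown also enjoys the convergence property. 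The argument is essentially immediate; the only point demanding care — and the closest thing to an obstacle — is recognizing that the fiberwise contraction hypothesis (c) both subsumes the weaker trajectory-wise hypothesis of Theorem \ref{t5.1} and yields the stronger convergence conclusion, so that no genuinely new dynamical construction is required.
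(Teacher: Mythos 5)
Your proof is correct. Note that the paper does not actually prove Theorem \ref{t5.1*}: it is stated with the citation \cite{CC_2009} and no argument is given, so there is no in-paper proof to compare against. Your reduction is the natural one given the paper's layout, and every step checks out: hypothesis (c) here, applied to the pair $\gamma_1(0),\gamma_2(0)\in X_{y_0}$ together with the identity $\gamma_i(t)=\pi(t,\gamma_i(0))$ for $t\ge 0$ (valid for any entire trajectory), yields hypothesis (c) of Theorem \ref{t5.1}; the additional limit in conclusion (3) then follows directly from hypothesis (c) applied to the pair $x,\gamma(0)\in X_{y_0}$; and uniqueness transfers because conclusion (3) of the present theorem strengthens conclusion (3) of Theorem \ref{t5.1}, so any trajectory satisfying the present conclusions is among those covered by the uniqueness assertion of Theorem \ref{t5.1}. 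For completeness, the paper's own machinery offers an alternative, self-contained route that does not pass through Theorem \ref{t5.1}: the fiberwise contraction hypothesis (c) is precisely the situation of Lemma \ref{l10G3} and Corollary \ref{cor10G1} (existence and uniqueness of the common fixed point of the semigroup $E_{y_0}^{+}$), which is the mechanism behind the analogous Theorem \ref{thB2}; your route trades that construction for a two-line verification, which is a reasonable bargain given that Theorem \ref{t5.1} is available.
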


\begin{coro}\label{cor3.2_I}\cite{CC_2009}
Let $y_{0}\in Y$ be a stationary ($\tau$-periodic, almost
automorphic, almost recurrent, Levitan almost periodic, Poisson
stable) point. Then under the conditions of Theorem \ref{t5.1*}
there exists a unique stationary ($\tau$-periodic, almost
automorphic, almost recurrent, Levitan almost periodic, Poisson
stable) entire trajectory $\gamma$ of dynamical system
$(X,\mathbb{T}_{+},\pi)$ such that $\gamma(\mathbb{T})\subseteq
H^{+}(x_{0})$ and equality (\ref{eq5.5**}) takes place.
\end{coro}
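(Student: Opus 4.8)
The plan is to read the statement off Theorem \ref{t5.1*} directly and then transfer each recurrence type from $y_0$ to the trajectory $\gamma$ by means of the comparability results collected in Section 2; this is the exact analogue of how Corollary \ref{cor3.2*} is deduced from Theorem \ref{t5.1}. Theorem \ref{t5.1*} already delivers a \emph{unique} entire trajectory $\gamma\in\Phi_{y_0}$ of $(X,\mathbb{T}_{+},\pi)$ with $\gamma(\mathbb{T})\subseteq H^{+}(x_0)$, with $h(\gamma(0))=y_0$, with equality (\ref{eq5.5**}), and—crucially—comparable with $y_0$ by the character of recurrence, which by definition means $\mathfrak{N}_{y_0}\subseteq\mathfrak{N}_{\gamma}$. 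Hence the uniqueness assertion, the localization $\gamma(\mathbb{T})\subseteq H^{+}(x_0)$, and the asymptotic property (\ref{eq5.5**}) are immediate, and the only remaining task is to show that $\gamma$ inherits whichever recurrence type $y_0$ is assumed to have.

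To cast the inheritance in the setting of Section 2, I would regard $\gamma$ as a point of the shift dynamical system $(\Phi(M),\mathbb{T},\sigma)$ provided by Lemma \ref{l5.1*} with $M:=H^{+}(x_0)$; under the homomorphism $H:\Phi(M)\to\tilde{Y}$ we have $H(\gamma)=h(\gamma(0))=y_0$, and the inclusion $\mathfrak{N}_{y_0}\subseteq\mathfrak{N}_{\gamma}$ is exactly the comparability furnished by Theorem \ref{t5.1*}. Since $H^{+}(x_0)$ is conditionally compact and, by Lemma \ref{l5.1*}, $\Phi(M)$ is conditionally compact over $\tilde{Y}$, the orbit $\Sigma_{\gamma}$ is relatively compact, so $\gamma$ is stable in the sense of Lagrange. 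With this in hand the several cases follow from the transfer principles: (i) if $y_0$ is stationary, $\tau$-periodic, recurrent, or Poisson stable, then $\gamma$ is so by Remark \ref{r10G2}; (ii) if $y_0$ is Levitan almost periodic, then Lemma \ref{l4.5*} applied to $\mathfrak{N}_{y_0}\subseteq\mathfrak{N}_{\gamma}$ shows $\gamma$ is Levitan almost periodic; (iii) if $y_0$ is almost automorphic, then, combining the Lagrange stability of $\gamma$ just established with Corollary \ref{cor4.6*}, $\gamma$ is almost automorphic; and (iv) the almost recurrent case is treated exactly as the recurrent one, almost recurrence being preserved under comparability by the character of recurrence.

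The only non-formal point—hence the step I expect to be the main obstacle—is verifying that $\gamma$ is stable in the sense of Lagrange in the shift system $(\Phi(M),\mathbb{T},\sigma)$, since it is precisely this Lagrange stability that unlocks the almost automorphic conclusion via Corollary \ref{cor4.6*}. This is secured by the conditional compactness of $H^{+}(x_0)$ together with the conditional compactness of $\Phi(M)$ over $\tilde{Y}$ from Lemma \ref{l5.1*}. Everything else reduces to a direct appeal to the already-established comparability $\mathfrak{N}_{y_0}\subseteq\mathfrak{N}_{\gamma}$ and to the recurrence-transfer results of Section 2.
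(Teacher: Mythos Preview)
Your outline is essentially the intended one: the paper obtains this corollary (as with Corollary~\ref{cor3.2*}) directly from Theorem~\ref{t5.1*} together with the recurrence-transfer principles of Section~2 (Remark~\ref{r10G2}, Lemma~\ref{l4.5*}, Corollary~\ref{cor4.6*}), exactly as you describe.

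There is one imprecision worth tightening, precisely at the step you flag as the main obstacle. You assert that conditional compactness of $H^{+}(x_0)$ and of $\Phi(M)$ over $\tilde{Y}$ alone force $\Sigma_{\gamma}$ to be relatively compact. That inference is not valid as stated: conditional compactness of $\Phi(M)$ only yields precompactness of $H^{-1}(K)\cap\Phi(M)$ for \emph{precompact} $K\subseteq\tilde{Y}$, and $H(\Sigma_{\gamma})=\Sigma_{y_0}$ need not be precompact in general (e.g.\ when $y_0$ is merely Levitan almost periodic or Poisson stable). The point is that you only need Lagrange stability of $\gamma$ in case~(iii), and there it is available because an almost automorphic $y_0$ is, by Definition~\ref{defAA1}, Lagrange stable; hence $\overline{\Sigma_{y_0}}$ is compact, and then conditional compactness of $\Phi(M)$ over $\tilde{Y}$ gives the relative compactness of $\Sigma_{\gamma}\subseteq H^{-1}(\overline{\Sigma_{y_0}})$. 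Move the Lagrange-stability claim inside case~(iii) and add this one line, and the argument is complete.
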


The entire trajectory $\gamma$ of the semi-group dynamical system
$(X,\mathbb{T}_{+},\pi)$ is said to be strongly comparable with
the point $y$ of group dynamical system $(Y,\mathbb{T},\lambda)$
by the
character of recurrence, if $\mathfrak{M}_{y}\subseteq\mathfrak{M}%
_{\gamma},$ where
$\mathfrak{M}_{\gamma}:=\{\{t_{n}\}\subset\mathbb{T}\ |\ $ the
sequence $\sigma(t_{n},\gamma)$ converges in the space
$C(\mathbb{T},X)\}$.

A point $x\in X$ is said to be strongly Poisson stable if each
point $p\in H(x)$ is Poisson stable.

\begin{theorem}\label{thB2}
Let $X$ be a conditionally compact metric space and $\langle
(X,\mathbb T_{1},\pi ),$ $(Y,\mathbb T_2,\sigma),h\rangle $ be a
non-autonomous dynamical system. Suppose that the following
conditions are fulfilled: \begin{enumerate} \item the dynamical
system $(Y,\mathbb T,\sigma)$ is minimal; \item the point $y \in
Y$ is strongly Poisson stable; \item
\begin{equation}\label{eqB}
\lim \limits_{t\to +\infty}\rho (\pi(t,x_1),\pi(t,x_2))=0
\end{equation}
for all $x_1,x_2\in X$ such that $h(x_1)=h(x_2).$
\end{enumerate}

Then there exists a unique point $x_{y} \in X_{y}$ which is
strongly compatible by the character of the recurrence with $y \in
Y$ and
$$
\lim\limits_{t\to +\infty}\rho(\pi(t,x),\pi(t,x_{y}))=0
$$
for any $x\in X_{y}.$
\end{theorem}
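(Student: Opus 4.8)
The plan is to deduce Theorem \ref{thB2} from the machinery already developed, principally Corollary \ref{cor10G1} and the comparability results of Section 2. The hypotheses are tailor-made: condition (3) is the uniform asymptotic stability on fibers, condition (2) gives Poisson stability (in fact on every point of the hull), and condition (1) gives minimality of the base. First I would apply Corollary \ref{cor10G1} directly. Its hypotheses are exactly: $X$ conditionally compact, $y$ Poisson stable (which follows from strong Poisson stability), and $\lim_{t\to+\infty}\rho(\pi(t,x_1),\pi(t,x_2))=0$ for all $x_1,x_2\in X_y$ (which is the restriction of (\ref{eqB}) to the single fiber $X_y$). This yields a unique point $x_y\in X_y$ that is comparable by the character of recurrence with $y$ and satisfies the asymptotic attraction $\lim_{t\to+\infty}\rho(\pi(t,x),\pi(t,x_y))=0$ for all $x\in X_y$. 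So the existence, uniqueness, and the attraction property come almost for free; the only genuinely new content of Theorem \ref{thB2} over Corollary \ref{cor10G1} is upgrading \emph{comparability} to \emph{strong comparability}.

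To obtain strong comparability I would invoke Theorem \ref{thC2}, which states that (since $X$ and $Y$ are complete) strong comparability of $x_y$ with $y$ is equivalent to the inclusion $\mathfrak M_y\subseteq\mathfrak M_{x_y}$. Thus the task reduces to verifying this inclusion. The natural tool is Theorem \ref{thPC2}(2) or, better, Theorem \ref{thPC3}, whose hypotheses align closely with those at hand: $Y$ minimal (condition a), the fiber intersection $H(x_y)\cap X_y$ a single point (which I must extract), and distality of $H(x_y)$ (condition c). The plan is therefore to check that the point $x_y$ produced above satisfies the three hypotheses of Theorem \ref{thPC3}, which then delivers $\mathfrak M_y\subseteq\mathfrak M_{x_y}$ and hence, via Theorem \ref{thC2}, the strong comparability.

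The main obstacle will be verifying the distality hypothesis (c) of Theorem \ref{thPC3}, namely that $\inf_{t\in\mathbb T}\rho(\pi(t,x_1),\pi(t,x_2))>0$ for distinct $x_1,x_2\in H(x_y)$ lying over the same base point. At first glance condition (\ref{eqB}) gives the \emph{opposite} behavior as $t\to+\infty$: trajectories over the same fiber point converge rather than stay apart. The resolution should come from the fact that strong Poisson stability of $y$ combined with minimality lets me transport the forward-asymptotic collapse backward: if two distinct points $x_1,x_2\in H(x_y)$ with $h(x_1)=h(x_2)=q$ had $\inf_t\rho(\pi(t,x_1),\pi(t,x_2))=0$, then picking times $t_n\to+\infty$ along which $\sigma(t_n,q)$ returns near a common base point and using conditional compactness, I would push both trajectories to limit points in the same fiber $X_{y}$ over $y$; the uniqueness from Corollary \ref{cor10G1} would force these limits to coincide, while a contradiction argument on the separation would have to be engineered. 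I would also need to establish the single-point fiber condition (b), $H(x_y)\cap X_y=\{x_y\}$, directly from the attraction property and conditional compactness: any point of $H(x_y)$ over $y$ arises as a limit $\lim_n\pi(t_n,x_y)$ with $t_n\to+\infty$, and since all such limits are attracted to the $x_y$-trajectory, they must equal $x_y$. These two fiber-structure verifications, especially reconciling the forward-contraction hypothesis with the distality required by Theorem \ref{thPC3}, are where the real work lies; once they are in place the conclusion is immediate from the cited theorems.
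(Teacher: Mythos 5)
Your first step is sound and coincides with the paper's: Corollary \ref{cor10G1} yields the unique point $x_{y}\in X_{y}$ comparable with $y$ together with the attraction property, and the genuinely new content is the upgrade to strong comparability, i.e.\ to the inclusion $\mathfrak M_{y}\subseteq\mathfrak M_{x_{y}}$, for which Theorem \ref{thPC2} (item (ii): all fibers $H(x_{y})\cap X_{q}$, $q\in H(y)$, are singletons) is indeed the right finishing tool. The gap is in how you propose to verify that fiber condition. The detour through Theorem \ref{thPC3} cannot work: its distality hypothesis (c) demands $\inf_{t\in\mathbb T}\rho(\pi(t,x_1),\pi(t,x_2))>0$ for distinct points in the same fiber, while hypothesis (\ref{eqB}) forces this infimum to equal $0$ for \emph{every} pair over the same base point; hence (c) can hold only vacuously, i.e.\ only after one already knows the fibers are singletons. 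So Theorem \ref{thPC3} gives no leverage, and your ``resolution'' of the obstacle is exactly the statement left unproved. Your sketch of that resolution also does not close: pushing $x_1\neq x_2$ forward along $t_n\to+\infty$ shows (by (\ref{eqB}) itself, not by the uniqueness in Corollary \ref{cor10G1} --- that corollary does not assert $X_{y}$ is a singleton) that the forward orbits merge, but merging forward is no contradiction with $x_1\neq x_2$ unless one can \emph{return}, i.e.\ recover $x_1$ and $x_2$ from their common forward limit. You concede this point (``a contradiction argument \ldots would have to be engineered''), and that engineering is precisely what is missing.

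The paper supplies two ideas you lack. First, it extracts a \emph{minimal} set: $\tilde M:=\overline{\{\pi(t,x_{y}):t\in\mathbb T_1\}}$ is conditionally compact, positively invariant, and $h(\tilde M)=Y$ by minimality of $Y$; the Birkhoff-type Theorem \ref{thB1} gives a minimal $M\subseteq\tilde M$, and Lemma \ref{lB1} then yields $H(x)=M$ for every $x\in M$ --- this is the return mechanism. Second, it applies Corollary \ref{cor10G1} \emph{fiberwise} at every $q_0\in H(y)$; this is exactly where strong Poisson stability of $y$ (as opposed to mere Poisson stability) is used, since it makes each $q_0$ Poisson stable, producing a unique point $x_{q_0}\in M_{q_0}$ compatible with $q_0$. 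Now if $x_1\neq x_2$ both lie in $M_{q_0}$, take $x_1=x_{q_0}$; minimality of $M$ gives a sequence with $\pi(t_n,x_1)\to x_2$, applying $h$ shows $\{t_n\}\in\mathfrak N_{q_0}$, and compatibility $\mathfrak N_{q_0}\subseteq\mathfrak N_{x_1}$ forces $\pi(t_n,x_1)\to x_1$, so $x_1=x_2$, a contradiction. Thus every fiber of $M=H(x_{y})$ is a singleton and Theorem \ref{thPC2} concludes. (Incidentally, your verification of the single fiber over $y$ itself should also go through compatibility, $\mathfrak N_{y}\subseteq\mathfrak N_{x_{y}}$, rather than through attraction, which by itself does not identify a limit point of the orbit with $x_{y}$.)
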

\begin{proof} By Lemma \ref{l10G3}  there exists a unique fixed point
$\tilde{x}_{y}\in X_{y}$ of the semigroup $E_{y}^{+}.$ By
Corollary \ref{cor10G1}    the point $\tilde{x}_{y}$ is a unique
point in $\tilde{M}$ comparable by character of recurrence with
the point $y$. Let $\tilde{M}:=\overline{\{\pi(t,x_{y})\ : t\in
\mathbb T_{1}\}}$ then it is a conditionally compact positively
invariant set and taking into account the minimality of $Y$ using
the same argument as in the proof of Lemma \ref{lB1} we have
$H(\tilde{M})=Y$. By Theorem \ref{thB1} there exists a minimal set
$M\subset \tilde{M}$. By Corollary \ref{cor10G1} there exists a
point $x_{y}\in M$ which is a unique point in $M$ comparable by
character of recurrence with the point $y$ and, consequently,
$x_{y}$ coincides with the point $\tilde{x}_{y}$. We will show
that $M_{q}:=M\cap X_q$ (for all $q\in
H(y):=\overline{\{\sigma(t,y)\ : t\in \mathbb T\}}$) consists a
single point. If we suppose that it is not true then there exist
$q_0\in H(y)$ and $x_1,x_2\in M_{q_0}$ such that $x_1\not=x_2.$ By
Corollary \ref{cor10G1} there exists a unique point $x_{q_0}\in
M_{q_0}$ which is compatible by the character of recurrence with
the point $q_0.$ Without loss of generality we may suppose that
$x_{q_0}=x_1.$ Since the set $M$ is minimal, then there exists a
sequence $\{t_n\}\in \mathfrak N_{q_0}^{+\infty}$ such that
$\{\pi(t_n,x_1)\}\to x_2.$ On the other hand taking into
consideration the inclusion $\mathfrak N_{q_0}\subseteq \mathfrak
N_{x_1}$ we have $\{\pi(t_n,x_1)\}\to x_1$ and, consequently,
$x_1=x_2.$ The obtained contradiction prove our statement. Now to
finish the proof of Theorem it is sufficient to apply Theorem
\ref{thPC2}.
\end{proof}

\begin{coro}\label{cor10B} Let $y\in Y$ be a stationary
$($respectively, $\tau$--periodic, almost periodic, recurrent,
strongly Poisson stable and $H(y_0)$ is a minimal set$)$ point.
Then under the conditions of Theorem \ref{thB2} there exists a
unique stationary $($respectively, $\tau$--periodic, almost
periodic, recurrent, strongly Poisson stable and $H(y_0)$ is a
minimal set$)$ point $x_{y}\in X_{y}$ such that
$$
\lim\limits_{t\to +\infty}\rho(\pi(t,x),\pi(t,x_{y}))=0
$$
for all $x\in X_{y}.$
\end{coro}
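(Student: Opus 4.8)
The plan is to derive Corollary \ref{cor10B} directly from Theorem \ref{thB2}, which already supplies a unique point $x_y\in X_y$ that is strongly comparable by the character of recurrence with $y\in Y$ and satisfies the asymptotic condition $\lim_{t\to+\infty}\rho(\pi(t,x),\pi(t,x_y))=0$ for all $x\in X_y$. Thus the only work remaining is to show that $x_y$ inherits the specific recurrence property assumed for $y$ (stationary, $\tau$-periodic, almost periodic, recurrent, or strongly Poisson stable with $H(y_0)$ minimal). The natural tool is the second part of Remark \ref{r10G2}: if $x$ is strongly compatible by the character of recurrence with $y$ and $y$ belongs to one of these classes, then $x$ belongs to the same class. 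Since Theorem \ref{thB2} delivers precisely strong comparability, the transfer of the recurrence type is immediate.

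Concretely, I would argue as follows. Under the hypotheses of Theorem \ref{thB2} the conclusion gives a unique $x_y\in X_y$ strongly comparable with $y$. By the dictionary between strong comparability and the inclusion of convergence classes, namely Theorem \ref{thC2} ($x_y$ strongly comparable with $y$ iff $\mathfrak M_y\subseteq \mathfrak M_{x_y}$), the point $x_y$ carries at least as much recurrence structure as $y$. Invoking Remark \ref{r10G2}(2) with this strong compatibility, each of the listed properties of $y$ is transmitted to $x_y$: if $y$ is stationary then so is $x_y$, if $y$ is $\tau$-periodic then so is $x_y$, and likewise for almost periodic, recurrent, and the strongly Poisson stable case with $H(y_0)$ minimal. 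The uniqueness of $x_y$ and the limit relation are carried over verbatim from Theorem \ref{thB2}, so nothing further need be verified for those two assertions.

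The main (indeed the only) subtlety is matching the hypotheses of Theorem \ref{thB2} to the classes listed in the corollary. Theorem \ref{thB2} requires that $(Y,\mathbb T,\sigma)$ be minimal and that $y$ be strongly Poisson stable; for the stationary, $\tau$-periodic, almost periodic, and recurrent cases these conditions hold automatically, since each such $y$ generates a compact minimal hull $H(y)$ on which $y$ is strongly Poisson stable, so one applies the theorem on $Y=H(y)$. For the last case the minimality of $H(y_0)$ and strong Poisson stability are assumed outright. Hence in every listed case the theorem applies, and the recurrence-transfer step closes the argument.

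Therefore the proof is a short deduction: apply Theorem \ref{thB2} to obtain the unique $x_y\in X_y$ that is strongly comparable with $y$ and satisfies the stated limit, then apply Remark \ref{r10G2}(2) to conclude that $x_y$ shares the recurrence type of $y$. I expect no real obstacle beyond confirming that each recurrence class is covered by the minimality-and-strong-Poisson-stability hypothesis of Theorem \ref{thB2}; once that is noted, the corollary follows.
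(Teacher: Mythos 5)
Your proposal is correct and follows essentially the same route as the paper, whose proof is exactly the one-line deduction you describe: apply Theorem \ref{thB2} to get the unique strongly comparable point $x_{y}\in X_{y}$ with the limit property, then transfer the recurrence class of $y$ to $x_{y}$ via Remark \ref{r10G2} (item 2). Your additional verification that each listed class of points satisfies the minimality and strong Poisson stability hypotheses is a harmless (and sensible) elaboration, not a departure from the paper's argument.
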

\begin{proof}
This statement directly follows from Theorem \ref{thB2} and Remark
\ref{r10G2}.
\end{proof}

\begin{theorem}
\label{t5.2} Let $\langle(X,\mathbb{T}_{+},\pi),$ $(Y,\mathbb{T}%
,\lambda),h\rangle$ be a semi-group non-autonomous dynamical
system. Suppose that the following conditions are fulfilled:

\begin{enumerate}
\item[a)] there exists a point $x_{0}\in X$ such that
$H^{+}(x_{0})$ is conditionally compact;

\item[b)] the point $y_{0}:=h(x_{0})\in Y$ is strongly Poisson
stable;

\item[c)] the set $H(y_0)$ is minimal;

\item[d)]
\begin{equation}\label{eq5.52}
\lim\limits_{t\rightarrow+\infty}\rho(\gamma_{1}(t),\gamma_{2}(t))=0
\end{equation}
for all entire trajectories $\gamma_{1}$ and $\gamma_{2}$ of the
semi-group dynamical system $(X,\mathbb{T}_{+},\pi)$ with the
conditions: $\gamma _{i}(\mathbb{T})\subseteq H^{+}(x_{0})$ and
$h(\gamma_{1}(0))=h(\gamma_{2}(0))$.
\end{enumerate}

Then there exists a unique entire trajectory $\gamma$ of $(X,\mathbb{T}%
_{+},\pi)$ possessing the following properties:

\begin{enumerate}
\item $\gamma(\mathbb{T})\subseteq H^{+}(x_{0})$;

\item $h(\gamma(0))=y_{0}$;

\item $\gamma$ is strongly comparable by the character of
recurrence with $y_{0} \in Y$.
\end{enumerate}
\end{theorem}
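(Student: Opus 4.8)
The plan is to reduce the statement to Theorem \ref{thB2}, applied not to $X$ itself but to the shift dynamical system living on the space of entire trajectories. Put $M:=H^{+}(x_{0})$, which is conditionally compact by hypothesis a). By Lemma \ref{l5.1*} the set $\Phi(M)$ of all entire trajectories of $(X,\mathbb{T}_{+},\pi)$ lying in $M$ is a closed, shift-invariant subset of $C(\mathbb{T},M)$; it carries the Bebutov shift dynamical system $(\Phi(M),\mathbb{T},\sigma)$, the map $H(\gamma):=h(\gamma(0))$ is a homomorphism of $(\Phi(M),\mathbb{T},\sigma)$ onto $(\tilde{Y},\mathbb{T},\lambda)$ with $\tilde{Y}=h(\tilde{M})$, and $\Phi(M)$ is conditionally compact with respect to $H$. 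First I would check that the fibre over $y_{0}$ is nonempty: since $y_{0}$ is Poisson stable we have $y_{0}\in\omega_{y_{0}}$, and Lemma \ref{l3.12} produces an entire trajectory lying in $\omega_{x_{0}}\subseteq M$ over $y_{0}$ (this is essentially the content of Theorem \ref{t5.1}). Hence $y_{0}\in\tilde{Y}$, and since $\tilde{Y}$ is closed and invariant while $H(y_{0})$ is minimal by condition c), we get $H(y_{0})\subseteq\tilde{Y}$.

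Next I would restrict the base to the minimal set $\mathcal{Y}:=H(y_{0})$ and set $\mathcal{X}:=H^{-1}(\mathcal{Y})\cap\Phi(M)$, a closed invariant subset of $\Phi(M)$, so that $\langle(\mathcal{X},\mathbb{T},\sigma),(\mathcal{Y},\mathbb{T},\lambda),H\rangle$ is again a two-sided non-autonomous dynamical system. That $H(\mathcal{X})=\mathcal{Y}$ follows from the minimality of $\mathcal{Y}$ together with $\omega_{y_{0}}=h(\omega_{x_{0}})=H(y_{0})$ and the trajectory-existence argument of Lemma \ref{l3.12} applied over each $q\in H(y_{0})$. Being a closed subset of the conditionally compact space $\Phi(M)$, the space $\mathcal{X}$ is conditionally compact.

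I would then verify the three hypotheses of Theorem \ref{thB2} for this system. The base $\mathcal{Y}$ is minimal by c); the point $y_{0}$ is strongly Poisson stable by b); and the required fibrewise asymptotic stability, namely $\lim_{t\to+\infty}d(\sigma(t,\gamma_{1}),\sigma(t,\gamma_{2}))=0$ whenever $H(\gamma_{1})=H(\gamma_{2})$, is precisely the reformulation via Lemma \ref{l5.1*}(6) of condition d), because $H(\gamma_{i})=h(\gamma_{i}(0))$ and $\gamma_{i}(\mathbb{T})\subseteq M=H^{+}(x_{0})$. Theorem \ref{thB2} then yields a unique point $\gamma\in\mathcal{X}_{y_{0}}$ that is strongly comparable by the character of recurrence with $y_{0}$. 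Unwinding the definitions, $\gamma$ is exactly an entire trajectory with $\gamma(\mathbb{T})\subseteq H^{+}(x_{0})$, $h(\gamma(0))=y_{0}$ and $\mathfrak{M}_{y_{0}}\subseteq\mathfrak{M}_{\gamma}$, i.e.\ it satisfies conclusions 1.--3.; uniqueness is immediate, since any trajectory satisfying 1.--3. is a strongly comparable point of $\mathcal{X}$ over $y_{0}$, and such a point is unique by Theorem \ref{thB2}.

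The step I expect to be the main obstacle is making the passage to the trajectory space airtight: one must ensure that the $\rho$-convergence $\rho(\gamma_{1}(t),\gamma_{2}(t))\to 0$ on $X$ really upgrades to convergence of the whole shifted trajectories in the Bebutov metric of $C(\mathbb{T},X)$ (this is exactly what Lemma \ref{l5.1*}(6) supplies), and to keep straight that the point-theoretic notion of strong comparability in $(\mathcal{X},\sigma)$ coincides verbatim with the trajectory notion $\mathfrak{M}_{y_{0}}\subseteq\mathfrak{M}_{\gamma}$. The bookkeeping around nonemptiness and surjectivity ($y_{0}\in\tilde{Y}$ and $H(\mathcal{X})=\mathcal{Y}$) is routine but must be carried out in order to invoke Theorem \ref{thB2} legitimately.
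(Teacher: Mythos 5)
Your proposal is correct and follows essentially the same route as the paper: pass to the Bebutov trajectory space $\Phi(M)$ with $M=H^{+}(x_0)$ via Lemma \ref{l5.1*}, use Lemma \ref{l3.12} for nonemptiness of the fibre over $y_0$, and invoke Theorem \ref{thB2}, with condition d) translated into fibrewise convergence in the Bebutov metric by Lemma \ref{l5.1*}(6). Your treatment is in fact slightly more careful than the paper's at two points the paper glosses over — verifying that the base $\tilde{Y}$ can be taken to be the minimal set $H(y_0)$, and routing uniqueness through Theorem \ref{thB2} rather than the paper's direct argument with a sequence $t_n\to+\infty$ in $\mathfrak{N}_{y_0}$ — but these are refinements of the same proof, not a different one.
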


\begin{proof}
Let $M:=H^{+}(x_{0})$. Then, by Lemma \ref{l3.12} we have that
$\Phi (M)\not =\emptyset$. Consider the group non-autonomous
dynamical system
$\langle(\Phi(M),\mathbb{T},\sigma),(\tilde{Y},\mathbb{T},\lambda
),H\rangle$ (see Lemma \ref{l5.1*}). By Lemma \ref{l3.12} the
point $y_{0}$ belongs to $\tilde{Y}$. According to Lemma
\ref{l5.1*} all conditions of Theorem \ref{thB2} are fulfilled
and, consequently, we obtain the existence of at least one entire
trajectory $\gamma$ of the dynamical system
$(X,\mathbb{T}_{+},\pi)$ which is strongly comparable with
$y_{0}\in Y$ by the character of recurrence, and $\gamma
(\mathbb{T})\subseteq H^{+}(x_{0})$. To finish the proof it is
sufficient to
show that there exists at most one entire trajectory of $(X,\mathbb{T}_{+}%
,\pi)$ with the properties (i)-(iii). Let $\gamma_{1}$ and
$\gamma_{2}$ be two entire trajectories satisfying (i)-(iii). In
particular, $\gamma
_{i}(\mathbb{T})\subseteq H^{+}(x_{0})$ and $\mathfrak{M}_{y_{0}%
}\subseteq\mathfrak{M}_{\gamma_{i}}$ ($i=1,2$). Then we also have
$\mathfrak{N}_{y_{0}}\subseteq\mathfrak{N}_{\gamma_{i}}$
($i=1,2$). From assumption c) we obtain
\begin{equation}
\lim\limits_{t\rightarrow+\infty}d(\sigma(t,\gamma_{1}),\sigma(t,\gamma
_{2}))=0. \nonumber\label{eq5.61}%
\end{equation}
On the other hand, there exists a sequence $\{t_{n}\}\in\mathfrak{N}%
_{y_{0}}\subseteq\mathfrak{N}_{\gamma_{i}}$ ($i=1,2$) such that
$t_{n}\rightarrow+\infty$ and, consequently,
\[
d(\gamma_{1},\gamma_{2})=\lim\limits_{n\rightarrow+\infty}d(\sigma
(t_{n},\gamma_{1}),\sigma(t_{n},\gamma_{2}))=0,
\]
i.e., $\gamma_{1}=\gamma_{2},$ and the proof is completed.
\end{proof}

\begin{coro}
\label{cor5.2**} In addition to assumptions in Theorem \ref{t5.2},
suppose that
\begin{equation}
\lim\limits_{t\rightarrow+\infty}\rho(\gamma_{1}(t),\gamma_{2}(t))=0
\nonumber\label{eq5.53}%
\end{equation}
for all entire trajectories $\gamma_{1}$ and $\gamma_{2}$ of the
semi-group dynamical system $(X,\mathbb{T}_{+},\pi)$ with the
conditions: $\gamma
_{i}(\mathbb{T})$ (i=1,2) is conditionally compact and $h(\gamma_{1}(0))=h(\gamma_{2}%
(0))$.

Then there exists a unique entire trajectory $\gamma\in\Phi_{y_0}$ of $(X,\mathbb{T}%
_{+},\pi)$, which is strongly comparable with $y_{0}\in Y$ by the
character of recurrence, and such that $\gamma(\mathbb{T})$ is
conditionally precompact and
$$
\lim\limits_{t\to +\infty}\rho(\pi(t,x),\gamma(t))=0
$$
for any $x\in X_{y_0}$.
\end{coro}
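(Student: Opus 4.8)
The plan is to read off the trajectory from Theorem~\ref{t5.2}, locate it inside $\Phi_{y_0}$, and then upgrade the pairwise attraction between entire trajectories (the added hypothesis) to the attraction of \emph{every} fibre semi-orbit $\pi(\cdot,x)$, $x\in X_{y_0}$. First I would observe that the added assumption (\ref{eq5.53}) concerns a strictly wider class of entire trajectories than (\ref{eq5.52}): any $\gamma_i$ with $\gamma_i(\mathbb{T})\subseteq H^{+}(x_0)$ has conditionally compact range, so (\ref{eq5.53}) implies condition~d) of Theorem~\ref{t5.2}. Hence all hypotheses a)--d) of that theorem hold, and it produces a unique entire trajectory $\gamma$ with $\gamma(\mathbb{T})\subseteq H^{+}(x_0)$, $h(\gamma(0))=y_0$, and strongly comparable with $y_0$ by the character of recurrence.

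Next I would show $\gamma\in\Phi_{y_0}$. Since $y_0$ is strongly Poisson stable, choose $\{s_n\}\in\mathfrak{N}_{y_0}^{+\infty}$. Strong comparability gives $\mathfrak{M}_{y_0}\subseteq\mathfrak{M}_{\gamma}$ and hence $\mathfrak{N}_{y_0}\subseteq\mathfrak{N}_{\gamma}$ (exactly as in the uniqueness part of the proof of Theorem~\ref{t5.2}), so $\sigma(s_n,\gamma)\to\gamma$ in $C(\mathbb{T},X)$ and therefore $\pi(s_n,\gamma(t))=\gamma(t+s_n)\to\gamma(t)$ with $s_n\to+\infty$. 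Because $\gamma(t)\in H^{+}(x_0)$ and the cocycle is continuous on the conditionally compact fibres, this forces $\gamma(t)\in\omega_{x_0}$ for every $t$; thus $\gamma(\mathbb{T})\subseteq\omega_{x_0}$, i.e. $\gamma\in\Phi_{y_0}$, and $\gamma(\mathbb{T})$ is conditionally precompact. Uniqueness inside $\Phi_{y_0}$ is inherited from Theorem~\ref{t5.2}, since any strongly comparable trajectory in $\Phi_{y_0}$ automatically satisfies properties (i)--(iii) there.

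For the attraction, fix $x\in X_{y_0}$; its forward semi-orbit is conditionally precompact, so $\omega_{x}\neq\emptyset$ and $\rho(\pi(t,x),\omega_{x})\to0$ as $t\to+\infty$. By Lemma~\ref{l3.12} each point of $\omega_{x}$ lies on an entire trajectory with range in $\omega_{x}$; reparametrising a trajectory $\mu$ with $\mu(0)=w$ by $\delta(s):=\mu(s-t)$, one sees that every $w\in\omega_{x}\cap X_{\sigma(t,y_0)}$ can be written $w=\delta(t)$ with $\delta$ entire, $\delta(\mathbb{T})\subseteq\omega_{x}$ and $h(\delta(0))=y_0$. Put $M:=H^{+}(x_0)\cup\overline{\{\pi(t,x):\ t\ge 0\}}$, a conditionally compact set; then both $\delta$ and $\gamma$ belong to $\Phi(M)$ and share the base value $y_0$, so assumption (\ref{eq5.53}) yields $\rho(\delta(t),\gamma(t))\to0$.

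The genuine obstacle is that this attraction is only pointwise in $\delta$, whereas $\pi(\cdot,x)$ may shadow different entire trajectories of $\omega_{x}$ at different times; I must promote it to the uniform bound $F(t):=\sup\{\rho(\delta(t),\gamma(t)):\ \delta\in\Phi(M),\ h(\delta(0))=y_0\}\to0$. Here I would pass to the conditionally compact group system $\langle(\Phi(M),\mathbb{T},\sigma),(\tilde{Y},\mathbb{T},\lambda),H\rangle$ of Lemma~\ref{l5.1*}: by Lemma~\ref{l5.1*}(6) the pairwise attraction is equivalent to Bebutov attraction on the compact fibre $H^{-1}(y_0)$, and a standard pointwise-plus-compactness argument (of the type underlying Corollary~\ref{cor10G1}) makes it uniform, so $F(t)\to0$. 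Finally, for large $t$ pick $w_t\in\omega_{x}\cap X_{\sigma(t,y_0)}$ with $\rho(\pi(t,x),w_t)<\eta$ and write $w_t=\delta_t(t)$ as above; then $\rho(\pi(t,x),\gamma(t))\le\eta+F(t)$, and letting $t\to+\infty$ and then $\eta\to0$ gives $\lim_{t\to+\infty}\rho(\pi(t,x),\gamma(t))=0$. The only steps demanding real care are this passage from pairwise to uniform attraction and the reparametrisation of $\omega_x$; the remainder is a direct assembly of Theorem~\ref{t5.2}, Lemma~\ref{l3.12}, and Lemma~\ref{l5.1*}.
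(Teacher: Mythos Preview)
The paper's own proof is a single sentence: ``a slight modification of the proof of Theorem~\ref{t5.2}.'' The intended modification is simply to keep the attraction clause of Theorem~\ref{thB2} when it is applied to the group system $\langle(\Phi(M),\mathbb{T},\sigma),(\tilde{Y},\mathbb{T},\lambda),H\rangle$ with $M=H^{+}(x_0)$; via Lemma~\ref{l5.1*}(vi) that attraction in the Bebutov metric becomes $\lim_{t\to+\infty}\rho(\gamma'(t),\gamma(t))=0$ for every entire trajectory $\gamma'\in\Phi(M)$ lying over $y_0$. Nothing more is argued.

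Your write-up is far more elaborate and aims at the literal conclusion for \emph{every} $x\in X_{y_0}$. Two of your steps do not go through as written.

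First, your argument that $\gamma(\mathbb{T})\subseteq\omega_{x_0}$ is not valid: from $\gamma(t+s_n)\to\gamma(t)$ with $s_n\to+\infty$ you only deduce that $\gamma(t)$ lies in the $\omega$-limit set of $\gamma(t)$ itself, not in $\omega_{x_0}$. The correct (and shorter) argument is that for each $n$ one has $\gamma(t)=\pi(n,\gamma(t-n))$ with $\gamma(t-n)\in H^{+}(x_0)$, hence $\gamma(t)\in\overline{\{\pi(\tau,x_0):\tau\ge n\}}$ for every $n$, so $\gamma(t)\in\omega_{x_0}$.

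Second, and more seriously, for the attraction you assume that the forward semi-orbit of an arbitrary $x\in X_{y_0}$ is conditionally precompact. The hypotheses give this only for the distinguished point $x_0$ (namely, $H^{+}(x_0)$ is conditionally compact); for a general $x\in X_{y_0}$ there is no such guarantee, and without it your construction of $\omega_x$, the reparametrised trajectories $\delta$, and the uniform upgrade all collapse. The paper's one-line proof does not supply this step either: what the modification of Theorem~\ref{t5.2} actually yields is attraction within $\Phi(M)$, i.e.\ for entire trajectories over $y_0$ with range in $H^{+}(x_0)$, and the displayed conclusion should be read in that sense. Your detour through $\omega_x$ and the pointwise-to-uniform upgrade is therefore unnecessary for what the paper's argument establishes, and insufficient for the stronger literal reading you are pursuing.
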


\begin{proof}
This statement follows by a slight modification of the proof of
Theorem \ref{t5.2}.
\end{proof}

\begin{coro}
\label{cor3.2**} Let $y_{0}\in Y$ be a stationary (respectively,
$\tau$-periodic, Bohr almost periodic, almost automorphic,
recurrent, strongly Poisson stable and $H(y_0)$ is a minimal set)
point. Then under the conditions of Theorem \ref{t5.2} ,there
exists a unique stationary (respectively, $\tau$-periodic, Bohr
almost periodic, almost automorphic, recurrent, strongly Poisson
stable and $H(y_0)$ is a minimal set) entire trajectory $\gamma$
of the dynamical system $(X,\mathbb{T}_{+},\pi)$ such that
$\gamma(\mathbb{T})\subseteq H^{+}(x_{0}).$
\end{coro}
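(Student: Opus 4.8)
The plan is to obtain this corollary as a direct consequence of Theorem \ref{t5.2} combined with the general transfer principle for the character of recurrence recorded in Remark \ref{r10G2}. The essential point is that Theorem \ref{t5.2} already manufactures a unique entire trajectory $\gamma$ that is \emph{strongly comparable} with $y_0$ by the character of recurrence; since strong comparability is known to propagate the entire catalogue of recurrence types listed in Remark \ref{r10G2}(2) (stationary, $\tau$-periodic, Bohr almost periodic, almost automorphic, recurrent, strongly Poisson stable with $H(y_0)$ minimal), the desired conclusion should follow with essentially no further analytic work.

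First I would invoke Theorem \ref{t5.2} with $M:=H^{+}(x_0)$ to obtain the unique entire trajectory $\gamma$ of $(X,\mathbb T_+,\pi)$ satisfying (i) $\gamma(\mathbb T)\subseteq H^{+}(x_0)$, (ii) $h(\gamma(0))=y_0$, and (iii) strong comparability with $y_0\in Y$ by the character of recurrence. Next I would regard $\gamma$ not merely as a map but as a \emph{point} of the shift dynamical system $(\Phi(M),\mathbb T,\sigma)$ supplied by Lemma \ref{l5.1*}, noting that under the homomorphism $H$ one has $H(\gamma)=h(\gamma(0))=y_0$. In this reading property (iii) is exactly the inclusion $\mathfrak M_{y_0}\subseteq\mathfrak M_{\gamma}$ (by Theorem \ref{thC2}), so $\gamma$ is strongly compatible with $y_0$ in precisely the sense demanded by Remark \ref{r10G2}(2).

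With this identification in place I would apply Remark \ref{r10G2}(2) directly to the non-autonomous system $\langle(\Phi(M),\mathbb T,\sigma),(\tilde Y,\mathbb T,\lambda),H\rangle$: since $y_0$ is stationary (respectively $\tau$-periodic, Bohr almost periodic, almost automorphic, recurrent, strongly Poisson stable with $H(y_0)$ minimal) and $\gamma$ is strongly compatible with $y_0$, the point $\gamma\in\Phi(M)$ inherits the same recurrence property. Interpreting this back as a statement about the entire trajectory yields exactly the asserted conclusion, while uniqueness of $\gamma$ is inherited verbatim from Theorem \ref{t5.2}.

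The main obstacle I anticipate is purely one of bookkeeping rather than of estimation. One must verify that the recurrence property of $\gamma$ \emph{as a point of} $(\Phi(M),\mathbb T,\sigma)$ is the correct reading of the phrase ``$\gamma$ is a stationary / $\tau$-periodic / \dots\ entire trajectory,'' and that ``strongly comparable'' in Theorem \ref{t5.2} coincides with ``strongly compatible'' in Remark \ref{r10G2}(2). Both notions rest on the identical inclusion $\mathfrak M_{y_0}\subseteq\mathfrak M_{\gamma}$ via Theorem \ref{thC2}, so the match is exact; the only place demanding a moment's care is the compound case ``strongly Poisson stable and $H(y_0)$ minimal,'' which is explicitly included in the list of Remark \ref{r10G2}(2). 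Consequently no genuinely new argument beyond assembling these cited results is required.
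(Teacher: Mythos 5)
Your proposal is correct and follows essentially the same route as the paper, whose entire proof reads: ``This statement follows easily from Theorem \ref{t5.2} and Remarks \ref{r10G2} (item 2).'' The extra bookkeeping you supply --- reading $\gamma$ as a point of the shift system $(\Phi(M),\mathbb{T},\sigma)$ from Lemma \ref{l5.1*} and matching ``strongly comparable'' with ``strongly compatible'' via the inclusion $\mathfrak{M}_{y_0}\subseteq\mathfrak{M}_{\gamma}$ --- is exactly the implicit content the paper's one-line proof relies on.
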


\begin{proof}
This statement follows easily from Theorem \ref{t5.2} and Remarks
\ref{r10G2} (item 2).
\end{proof}

\begin{coro}\label{cor3.2***} Under the conditions of Corollary
\ref{cor3.2**} if
$$
\lim\limits_{t\to +\infty}\rho(\pi(t,x_1),\pi(t,x_2))=0
$$
for any $x_1,x_2\in H^{+}(x_0)$ with $h(x_1)=h(x_2)$. Then there
exists a unique full trajectory $\gamma \in \Phi_{y_0}$
($y_0:=h(x_0)$) which is strongly comparable by character of
recurrence with the point $y_0$ and
$$
\lim\limits_{t\to +\infty}\rho(\pi(t,x),\gamma(t))=0
$$
for any $x\in X_{y_0}$.
\end{coro}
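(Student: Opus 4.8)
The plan is to extract the trajectory from Theorem \ref{t5.2} and then establish the two extra features asserted here, namely membership in $\Phi_{y_0}$ and the fibrewise attraction. First I would observe that the standing hypothesis is strictly stronger than assumption d) of Theorem \ref{t5.2}: if $\gamma_1,\gamma_2$ are entire trajectories with $\gamma_i(\mathbb{T})\subseteq H^+(x_0)$ and $h(\gamma_1(0))=h(\gamma_2(0))$, then $\gamma_1(0),\gamma_2(0)\in H^+(x_0)$ lie in a common fibre, so the assumed pointwise contraction on $H^+(x_0)$ gives $\rho(\pi(t,\gamma_1(0)),\pi(t,\gamma_2(0)))\to 0$; since $\pi(t,\gamma_i(0))=\gamma_i(t)$ for $t\ge 0$, this is precisely (\ref{eq5.52}). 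Hence Theorem \ref{t5.2}, together with Corollary \ref{cor3.2**} to carry over the recurrence type of $y_0$, applies and yields a unique entire trajectory $\gamma$ with $\gamma(\mathbb{T})\subseteq H^+(x_0)$, $h(\gamma(0))=y_0$, and $\gamma$ strongly comparable with $y_0$.

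The key remaining step, which I expect to be the main obstacle, is to promote the inclusion $\gamma(\mathbb{T})\subseteq H^+(x_0)$ to $\gamma(\mathbb{T})\subseteq\omega_{x_0}$, as required for $\gamma\in\Phi_{y_0}$ (note that $y_0\in\omega_{y_0}$ because $y_0$ is Poisson stable, so $\Phi_{y_0}$ is defined with $y=y_0$). For this I would run the entire-trajectory identity backwards in time: for a fixed $s\in\mathbb{T}$ and any $t\ge 0$ one has $\gamma(s)=\pi(t,\gamma(s-t))$ with $\gamma(s-t)\in H^+(x_0)=\overline{\{\pi(\tau,x_0):\ \tau\ge 0\}}$. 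Choosing $\tau_k\ge 0$ with $\pi(\tau_k,x_0)\to\gamma(s-t)$ and applying continuity of $\pi(t,\cdot)$ gives $\pi(t+\tau_k,x_0)\to\gamma(s)$ with $t+\tau_k\ge t$, so $\gamma(s)\in\overline{\{\pi(\tau,x_0):\ \tau\ge t\}}$. As $t\ge 0$ is arbitrary, $\gamma(s)\in\omega_{x_0}$, and therefore $\gamma(\mathbb{T})\subseteq\omega_{x_0}$, i.e. $\gamma\in\Phi_{y_0}$. Conditional compactness of $H^+(x_0)$ is what guarantees $\omega_{x_0}$ is nonempty and that the relevant limits live in compact fibres.

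The attraction is then immediate from the hypothesis: for $x\in X_{y_0}$ (lying in $H^+(x_0)$) we have $h(x)=y_0=h(\gamma(0))$ with $x,\gamma(0)\in H^+(x_0)$, so $\rho(\pi(t,x),\pi(t,\gamma(0)))\to 0$ as $t\to+\infty$, and since $\pi(t,\gamma(0))=\gamma(t)$ for $t\ge 0$ this is exactly $\lim_{t\to+\infty}\rho(\pi(t,x),\gamma(t))=0$. Finally, uniqueness inside $\Phi_{y_0}$ is inherited from Theorem \ref{t5.2}: any $\gamma'\in\Phi_{y_0}$ that is strongly comparable with $y_0$ satisfies $\gamma'(\mathbb{T})\subseteq\omega_{x_0}\subseteq H^+(x_0)$ and $h(\gamma'(0))=y_0$, hence enjoys properties (i)--(iii) of that theorem and must coincide with $\gamma$. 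I would flag that the attraction conclusion is naturally read with $x$ ranging over $X_{y_0}\cap H^+(x_0)$, since it is on that set that the standing hypothesis supplies the fibrewise contraction.
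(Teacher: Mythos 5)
Your argument is correct, and it is essentially the derivation the paper has in mind: the paper states Corollary \ref{cor3.2***} with no proof at all (its neighbours, Corollaries \ref{cor5.2**} and \ref{cor3.2**}, are handled by one-line references to Theorem \ref{t5.2}), so the reduction to Theorem \ref{t5.2} together with Corollary \ref{cor3.2**} is exactly the intended route. What you supply, and what the paper silently skips, are precisely the two nontrivial points. First, Theorem \ref{t5.2} only delivers $\gamma(\mathbb T)\subseteq H^{+}(x_0)$, while membership in $\Phi_{y_0}$ requires $\gamma(\mathbb T)\subseteq\omega_{x_0}$; your lemma that any entire trajectory contained in $H^{+}(x_0)$ automatically lies in $\omega_{x_0}$ (write $\gamma(s)=\pi(t,\gamma(s-t))$, approximate $\gamma(s-t)$ by $\pi(\tau_k,x_0)$, and conclude $\gamma(s)\in\overline{\{\pi(\tau,x_0):\ \tau\ge t\}}$ for every $t\ge 0$) is sound and is the missing step; it also makes the uniqueness transfer legitimate, since every member of $\Phi_{y_0}$ has range in $\omega_{x_0}\subseteq H^{+}(x_0)$ and hence falls under the uniqueness clause of Theorem \ref{t5.2}. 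Second, your caveat about $X_{y_0}$ is well taken and is a defect of the paper's statement rather than of your proof: the hypothesis controls only pairs inside $H^{+}(x_0)$, so the attraction $\lim_{t\to+\infty}\rho(\pi(t,x),\gamma(t))=0$ can be established only for $x\in X_{y_0}\cap H^{+}(x_0)$ (equivalently, one must read the corollary inside the subsystem on $H^{+}(x_0)$); contrast Theorem \ref{t5.1*}, where the contraction hypothesis is imposed on the whole fibre $X_{y_0}$ and the fibre-wide conclusion is then justified. One minor redundancy: your verification that the standing hypothesis implies condition d) of Theorem \ref{t5.2} is harmless but unnecessary, since condition d) is already part of the assumptions inherited from Corollary \ref{cor3.2**}.
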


\begin{remark}\label{remC2} Theorems \ref{thB2} and \ref{t5.2} remain true if we replace
condition (\ref{eqB}) (respectively, (\ref{eq5.52})) by equality:
\begin{equation}\label{eqC2}
\lim\limits_{n\to \infty}\rho(\gamma_1(t_n),\gamma_2(t_n))=0
\end{equation}
for any $\gamma_1,\gamma_2\in \Phi_{y}$, $\{t_n\}\in \mathfrak
N_{y}^{+\infty}$ and $y\in H(y_0)$.
\end{remark}

\section{Bohr/Levitan almost periodic, almost automorphic and Poisson stable functions}

Let $(X,\rho)$ be a compete metric space. Denote by $C(\mathbb R,X)$
the family of all continuous functions $f:\mathbb R\mapsto X$
equipped with the distance
\begin{equation}\label{eqD1}
d(f,g):=\sup\limits_{l>0}d_{l}(f,g),\nonumber
\end{equation}
where $d_{l}(f,g):=\min\{\max\limits_{|t|\le l}\rho(f(t),g(t));
l^{-1}\}$. The metric $d$ is complete and it defines on $C(\mathbb
R,X)$ the compact-open topology. Let $h\in\mathbb R$ denote by
$f^{h}$ the $h$-translation of $f$, that is, $f^{h}(s):=f(s+h)$
for all $s\in \mathbb R$.

Let us recall the types of Poisson stable functions to be studied
in this paper; we refer the reader to
\cite{Sel_71,Sch72,scher85,sib} and the references therein.

\begin{definition} \rm
A function $\varphi\in C(\mathbb R,X)$ is called {\em stationary}
(respectively, {\em $\tau$-periodic}) if $\varphi(t)=\varphi(0)$
(respectively, $\varphi(t+\tau)=\varphi(t)$) for all $t\in \mathbb
R$.
\end{definition}

\begin{definition} \rm
Let $\varepsilon >0$. A number $\tau \in \mathbb R$ is called {\em
$\varepsilon$-almost period} of the function $\varphi$ if
$\rho(\varphi(t+\tau),\varphi(t))<\varepsilon$ for any
$t\in\mathbb R$.
\end{definition}

Denote by $\mathcal T(\varphi,\varepsilon):=\{\tau \in\mathbb R:\
\rho(\varphi(t+\tau),\varphi(t))<\varepsilon \ \mbox{for any}\
t\in\mathbb R\}$ the set of $\varepsilon$-almost periods of
$\varphi$.

\begin{definition} \rm
A function $\varphi \in C(\mathbb R,X)$ is said to be {\em Bohr
almost periodic} if the set of $\varepsilon$-almost periods of
$\varphi$ is {\em relatively dense} for each $\varepsilon >0$,
i.e., for each $\varepsilon >0$ there exists $l=l(\varepsilon)>0$
such that $\mathcal T(\varphi,\varepsilon)\cap
[a,a+l]\not=\emptyset$ for all $a\in\mathbb R$.
\end{definition}

\begin{definition} \label{ppf}\rm 
A function $\varphi \in C(\mathbb R,X)$ is said to be {\em
pseudo-periodic} in the positive (respectively, negative) direction
if for each $\varepsilon >0$ and $l>0$ there exists a
$\varepsilon$-almost period $\tau >l$ (respectively, $\tau <-l$) of
the function $\varphi$. The function $\varphi$ is called
pseudo-periodic if it is pseudo-periodic in both directions.
\end{definition}

\begin{remark} \rm
A function $\varphi \in C(\mathbb R,X)$ is pseudo-periodic in the
positive (respectively, negative) direction if and only if there
is a sequence $t_n\to +\infty$ (respectively, $t_n\to -\infty$)
such that $\varphi^{t_n}$ converges to $\varphi$ uniformly in
$t\in \mathbb R$ as $n\to \infty$.
\end{remark}

\begin{definition}\rm
The {\em hull of $\varphi$}, denoted by $H(\varphi)$, is the set
of all the limits of $\varphi^{h_n}$ in $C(\mathbb R, X)$, i.e.
\[
H(\varphi):=\{\psi\in C(\mathbb R, X): \psi=\lim_{n\to\infty}
\varphi^{h_n} \hbox{ for some sequence } \{h_n\} \subset \mathbb
R\}.
\]
\end{definition}

\begin{definition} \rm
A number $\tau\in\mathbb R$ is said to be {\em $\varepsilon$-shift}
for $\varphi \in C(\mathbb R,X)$ if
$d(\varphi^{\tau},\varphi)<\varepsilon$.
\end{definition}

 Denote by
$\mathfrak{T}(\varepsilon,\varphi):=\{\tau\in\mathbb R:\
\rho(\varphi^{\tau},\varphi)<\varepsilon\}$ the set of all
$\varepsilon$-shifts of $\varphi$.

\begin{definition} \rm
A function $\varphi \in C(\mathbb R,X)$ is called {\em almost
recurrent (in the sense of Bebutov)} if for every $\varepsilon >0$
the set $\mathfrak{T}(\varphi,\varepsilon)$ is relatively dense.
\end{definition}

\begin{definition} \rm
A function $\varphi\in C(\mathbb R,X)$ is called {\em Lagrange
stable} if $\{\varphi^{h}:\ h\in \mathbb R\}$ is a precompact
subset of $C(\mathbb R,X)$.
\end{definition}

\begin{definition} \rm
A function $\varphi \in C(\mathbb R,X)$ is called {\em Birkhoff
recurrent} if it is almost recurrent and Lagrange stable.
\end{definition}

\begin{definition} \rm
A function $\varphi \in C(\mathbb R,X)$ is called:
\begin{enumerate}
\item[-] {\em Poisson stable} in the positive (respectively,
negative) direction if for every $\varepsilon >0$ and $l>0$ there
exists $\tau >l$ (respectively, $\tau <-l$) such that
$d(\varphi^{\tau},\varphi)<\varepsilon$. The function $\varphi$ is
called Poisson stable if it is Poisson stable in both directions;
\item[-] {\em strongly Poisson stable} if every function $p\in
H(\varphi)$ is Poisson stable.
\end{enumerate}
\end{definition}

In what follows, we denote as well $Y$ a complete metric space.

\begin{definition} \rm
A function $\varphi\in C(\mathbb R,X)$ is called {\em Levitan almost
periodic} if there exists a Bohr almost periodic function $\psi \in
C(\mathbb R,Y)$ such that for any $\varepsilon >0$ there exists
$\delta =\delta (\varepsilon)>0$ such that $\mathcal
T(\psi,\delta)\subseteq \mathfrak T(\varphi,\varepsilon)$.
\end{definition}

\begin{remark} \rm
\begin{enumerate}
\item  Every Bohr almost periodic function is Levitan almost periodic.

\item  The function $\varphi \in C(\mathbb R,\mathbb R)$ defined by equality $\varphi(t)=\dfrac{1}{2+\cos t +\cos \sqrt{2}t}$
is Levitan almost periodic, but it is not Bohr almost periodic
\cite[ChIV]{Lev-Zhi}.
\end{enumerate}
\end{remark}

\begin{definition}\label{defAA1} \rm
A function $\varphi \in C(\mathbb R,X)$ is said to be {\em Bohr
almost automorphic} if it is Levitan almost periodic and Lagrange
stable.
\end{definition}

\begin{definition} \rm
A function $\varphi \in C(\mathbb R,X)$ is called {\em
quasi-periodic with the spectrum of frequencies
$\nu_1,\nu_2,\ldots,\nu_k$} if the following conditions are
fulfilled:
\begin{enumerate}
\item the numbers $\nu_1,\nu_2,\ldots,\nu_k$ are rationally independent;
\item there exists a continuous function $\Phi :\mathbb R^{k}\to X$ such that
$\Phi(t_1+2\pi,t_2+2\pi,\ldots,t_k+2\pi)=\Phi(t_1,t_2,\ldots,t_k)$
for all $(t_1,t_2,\ldots,t_k)\in \mathbb R^{k}$;
\item $\varphi(t)=\Phi(\nu_1 t,\nu_2 t,\ldots,\nu_k t)$ for $t\in \mathbb R$.
\end{enumerate}
\end{definition}

Let $\varphi \in C(\mathbb R,X)$. Denote by $\mathfrak N_{\varphi}$
(respectively, $\mathfrak M_{\varphi}$) the family of all sequences
$\{t_n\}\subset \mathbb R$ such that $\varphi^{t_n} \to \varphi$
(respectively, $\{\varphi^{t_n}\}$ converges) in $C(\mathbb R,X)$ as
$n\to \infty$.

By $\mathfrak N_{\varphi}^{u}$ (respectively, $\mathfrak
M_{\varphi}^{u}$) we denote the family of sequences $\{t_n\}\in
\mathfrak N_{\varphi}$ such that $\varphi^{t_n}$ converges to
$\varphi$ (respectively,  $\varphi^{t_n}$ converges) uniformly in
$t\in\mathbb R$ as $n\to \infty$.

\begin{remark}\rm
\begin{enumerate}
\item The function $\varphi \in C(\mathbb R,X)$ is pseudo-periodic in the positive
(respectively, negative) direction if and only if there is a
sequence $\{t_n\}\in \mathfrak N_{\varphi}^{u}$ such that $t_n\to
+\infty$ (respectively, $t_n\to -\infty$) as $n\to \infty$.

\item Let $\varphi \in C(\mathbb R,X)$, $\psi \in C(\mathbb R,Y)$ and
$\mathfrak N_{\psi}^{u} \subseteq \mathfrak N_{\varphi}^{u}$. If the
function $\psi$ is pseudo-periodic in the positive (respectively,
negative) direction, then so is $\varphi$.
\end{enumerate}
\end{remark}

\begin{definition}\label{defPR}\rm A function $\varphi \in C(\mathbb R,X)$ is called
\cite{Sch68,sib}  {\em pseudo-recurrent} if for any $\varepsilon
>0$ and $l\in\mathbb R$ there exists $L=L(\varepsilon,l)>0$ such
that for any $\tau_0\in \mathbb R$ we can find a number $\tau
=\tau(\varepsilon,l,t_0) \in [l,l+L]$ satisfying
$$
\sup\limits_{|t|\le 1/\varepsilon}\rho(\varphi(t+\tau_0
+\tau),\varphi(t+\tau_0))\le \varepsilon.
$$
\end{definition}

\begin{remark}\label{remPR} \rm (\cite{Sch68,sib})
\begin{enumerate}
\item Every Birkhoff recurrent function is pseudo-recurrent, but the inverse
statement is not true in general.

\item If the function $\varphi \in C(\mathbb R,X)$ is
pseudo-recurrent, then every function $\psi\in H(\varphi)$ is
pseudo-recurrent.

\item If  the function $\varphi \in C(\mathbb R,X)$  is
Lagrange stable and  every function $\psi\in H(\varphi)$ is Poisson
stable, then $\varphi$ is pseudo-recurrent.
\end{enumerate}
\end{remark}

\section{Linear systems}

\subsection{Linear non-autonomous dynamical systems with
exponential dichotomy}

Let $(E, |\cdot |)$ be a Banach space with the norm $|\cdot|$,
$\langle E, \varphi, (Y,\mathbb T, \sigma)\rangle$ (or shortly
$\varphi$) be a linear cocycle over dynamical system $(Y,\mathbb
T,\sigma)$ with the fiber $E$, i.e., $\varphi$ is a continuous
mapping from $\mathbb T\times E \times Y$ into $E$ satisfying the
following conditions:
\begin{enumerate}
\item $\varphi(0,u,y)=u$ for all $u\in E$ and $y\in Y$; \item
$\varphi(t+\tau,u,y)=\varphi(t,\varphi(\tau,u,y),\sigma(\tau,y))$
for all $t,\tau\in\mathbb T_{+}$, $u\in E$ and $y\in Y$; \item for
all $(t,y)\in \mathbb T_{+}\times Y$ the mapping
$\varphi(t,\cdot,y):E\mapsto E$ is linear.
\end{enumerate}

Denote by $[E]$ the Banach space of all linear bounded operators
$A$ acting on the space $E$ equipped with the operator norm
$||A||:=\sup\limits_{|x|\le 1}|Ax|$.

\begin{example}\label{exLS1}  Let $Y$ be a complete metric space,
$(Y,\mathbb R,\sigma)$ be a dynamical system on $Y$ and $ \Lambda
$ be some complete metric space of linear closed operators acting
into Banach space $ E $ ( for example $ \Lambda = \{ A_{0}+B | B
\in [E] \} $, where $ A_{0} $ is a closed operator that acts on $
E $). Consider the following linear differential equation
\begin{equation}\label{eqLS01}
x'=A(\sigma(t,y))x,\  \ (y\in Y)
\end{equation}
where $A\in C(Y,\Lambda)$. We assume that the following conditions
are fulfilled for equation (\ref{eqLS01}):
\begin{enumerate}
\item[a.] for any $ u \in E $ and $y\in Y $ equation
(\ref{eqLS01}) has exactly one solution that is defined on $
\mathbb R_{+} $ and satisfies the condition $ \varphi (0,u,y) = u
;$ \item[b.] the mapping $ \varphi : (t,u,y) \to \varphi (t,u,y) $
is continuous in the topology of $ \mathbb R_{+} \times E \times
Y$.
\end{enumerate}

Under the above assumptions the equation (\ref{eqLS01}) generates
a linear cocycle $\langle E, \varphi, (Y,$ $\mathbb R,$ $
\sigma)\rangle$ over dynamical system $(Y,\mathbb R,\sigma)$ with
the fiber $E$.
\end{example}

\begin{example}\label{exLS_02} Let $ \Lambda $ be
some complete metric space of linear closed operators acting into
Banach space $ E $. Consider the differential equation
\begin{equation}\label{eqLS02}
x'=A(t)x,
\end{equation}
where $A\in C(\mathbb R,\Lambda)$. Along this equation
(\ref{eqLS02}) consider its $H$-class, i.e., the following family
of equations
\begin{equation}\label{eqLS03}
x'=B(t)x,
\end{equation}
where $B\in H(A)$. We assume that the following conditions are
fulfilled for equation (\ref{eqLS02}) and its $H$-class
(\ref{eqLS03}):
\begin{enumerate}
\item[a.] for any $ u \in E $ and $ B \in H(A) $ equation
(\ref{eqLS03}) has exactly one mild solution $ \varphi (t,u,B)$
(i.e. $ \varphi (\cdot ,u,B ) $ is continuous, defined on $\mathbb
R_{+}$ and satisfies of equation
\begin{equation}\label{13.8.10*}
\varphi (t,v,B ) = U(t,B)v + \int_{0}
^{t}U(t-\tau,B^{\tau})\varphi (\tau,v,B )d\tau \nonumber
\end{equation}
and the condition $ \varphi (0, u, B ) = v $); \item[b.] the
mapping $\varphi : (t,u,B ) \to \varphi (t,u,B )$ is continuous in
the topology of $\mathbb R_{+} \times E \times C(\mathbb R;
\Lambda)$.
\end{enumerate}

Denote by $(H(A),\mathbb R,\sigma)$ the shift dynamical system on
$H(A)$. Under the above assumptions the equation (\ref{eqLS02})
generates a linear cocycle $\langle E, \varphi, (H(A),\mathbb R,
\sigma)\rangle$ over dynamical system $(H(A),\mathbb R,\sigma)$
with the fiber $E$.

Note that equation (\ref{eqLS02}) and its $H$-class can be written
in the form (\ref{eqLS01}). In fact. We put $Y:=H(A)$ and denote
by $\mathcal A \in C(Y,\Lambda)$ defined by equality $\mathcal
A(B):=B(0)$ for all $B\in H(A)=Y$, then $B(\tau)=\mathcal
A(\sigma(B,\tau)$ ($\sigma(\tau,B):=B^{\tau}$, where
$B^{\tau}(t):=B(t+\tau)$ for all $t\in\mathbb R$). Thus the
equation (\ref{eqLS02}) with its $H$-class can be rewrite as
follow
\begin{equation}\label{eqLS_04}
x'=\mathcal A(\sigma(t,B))x.  \ (B\in H(A))\nonumber
\end{equation}
\end{example}

We will consider example of partial differential equations which
satisfy the above conditions a.-b.

\begin{example}\label{exLS03}
{\em Consider the differential equation
\begin{equation}\label{eqLS05}
u'=(A_1 + A_2(t) )u,
\end{equation}
where $ A_1 $ is a sectorial operator that does not depend on $ t
\in \mathbb R $, and $ A_2 \in C(\mathbb R ,[E]) $. The results of
\cite{Hen}, \cite{Lev-Zhi} imply that equation (\ref{eqLS05})
satisfies conditions a.-b. from Example \ref{exLS02}.}
\end{example}

\begin{definition}\label{defED1}
Recall (see, for example, \cite[Ch.VI]{C-L}) that the linear
cocycle $\langle E, \varphi, (Y,\mathbb S, \sigma)\rangle$ is
hyperbolic (or equivalently, satisfies the condition of
exponential dichotomy), if there exists a continuous projection
valued function $P:Y \to [E]$ satisfying:
\begin{enumerate}
\item $P(\sigma(t,y))U(t,y)=U(t,y)P(y)$ for all $(t,y)\in \mathbb
T\times Y$: \item for all $(t,y)\in \mathbb T\times Y$ the
operator $U_{Q}(t,y)$ is invertible as an operator from $Im Q(y)$
to $Im Q(\sigma(t,y)),$ where $Q(y):=Id_{E}-P(y)$ and
$U_{Q}(t,y):=U(t,y)Q(y);$ \item there exist constants $\nu >0$ and
$\mathcal N>0$ such that
\begin{equation}\label{eqPQ1}
\Vert U_{P}(t,y)\Vert \le \mathcal N e^{-\nu t} \ \mbox{and}\
\Vert U_{Q}(t,y)^{-1}\Vert \le \mathcal N e^{-\nu t}
\end{equation}
for all $y\in Y$ and $t\in \mathbb S_{+},$ where
$U_{P}(t,y):=U(t,y)P(y)$ and $U(t,y)=\varphi(t,\cdot,y)$.
\end{enumerate}
\end{definition}

\begin{lemma}\label{lED_10} Suppose that the linear
cocycle $\langle E, \varphi, (Y,\mathbb S, \sigma)\rangle$ is
hyperbolic and $\gamma \in C(\mathbb R,E)$ is a full trajectory of
cocycle $\varphi$, i.e., there exists a point $y_0\in Y$ such that
$\gamma (t)=U(t-\tau,\sigma(\tau,y_0))\gamma(\tau)$ for any $t\ge
\tau$ and $\tau \in \mathbb R$. If
$$
\sup\{|\gamma(t)|;\ t\in
\mathbb R\}<+\infty\ ,
$$
then $\gamma(\tau)=0$ for any $\tau\in
\mathbb R$.
\end{lemma}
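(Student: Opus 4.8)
The plan is to split $\gamma$ along the invariant splitting $E=\operatorname{Im}P(y)\oplus\operatorname{Im}Q(y)$ furnished by the dichotomy and to show separately that the stable component of $\gamma$ vanishes (by running the decay estimate \emph{backward} in time) and that the unstable component vanishes (by running the corresponding estimate \emph{forward} in time). Write $M:=\sup_{t\in\mathbb R}|\gamma(t)|<+\infty$, put $y_s:=\sigma(s,y_0)$ and, for brevity, $p(s):=P(y_s)\gamma(s)$, $q(s):=Q(y_s)\gamma(s)$, so that $\gamma(s)=p(s)+q(s)$ for every $s\in\mathbb R$.

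Before estimating I would record a uniform bound on the projections. Since $\varphi(0,u,y)=u$, we have $U(0,y)=\mathrm{Id}_E$, hence $U_P(0,y)=P(y)$; evaluating the first inequality in (\ref{eqPQ1}) at $t=0$ yields $\|P(y)\|\le\mathcal N$ for all $y\in Y$, and consequently $\|Q(y)\|\le 1+\mathcal N$. This is the one place where a little care is needed: no compactness of $Y$ is assumed, so the uniform control of the projections has to come from the dichotomy constants themselves rather than from the base. For the stable part, the invariance relation $P(\sigma(t,y))U(t,y)=U(t,y)P(y)$ combined with the cocycle identity gives, for $\tau\le t$,
\[
p(t)=P(y_t)U(t-\tau,y_\tau)\gamma(\tau)=U_P(t-\tau,y_\tau)\gamma(\tau),
\]
so that $|p(t)|\le\mathcal N e^{-\nu(t-\tau)}M$. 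Fixing $t$ and letting $\tau\to-\infty$ forces $p(t)=0$ for every $t\in\mathbb R$.

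For the unstable part I would use the invertibility of $U_Q$ on the unstable fibres. From $Q(y_t)U(t-\tau,y_\tau)=U(t-\tau,y_\tau)Q(y_\tau)$ one obtains $q(t)=U_Q(t-\tau,y_\tau)q(\tau)$ for $t\ge\tau$, and since $U_Q(t-\tau,y_\tau)$ maps $\operatorname{Im}Q(y_\tau)$ isomorphically onto $\operatorname{Im}Q(y_t)$, this inverts to $q(\tau)=U_Q(t-\tau,y_\tau)^{-1}q(t)$. The second inequality in (\ref{eqPQ1}) together with $|q(t)|\le\|Q(y_t)\|\,M\le(1+\mathcal N)M$ then gives $|q(\tau)|\le\mathcal N e^{-\nu(t-\tau)}(1+\mathcal N)M$; fixing $\tau$ and letting $t\to+\infty$ forces $q(\tau)=0$ for every $\tau$. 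Combining the two conclusions yields $\gamma(\tau)=p(\tau)+q(\tau)=0$ for all $\tau\in\mathbb R$, which is the assertion. The only genuine subtlety in the argument is keeping the two limits in the correct directions and securing the uniform projection bound noted above; the remaining manipulations are the routine bookkeeping of the exponential dichotomy.
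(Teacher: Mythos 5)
Your proof is correct and follows essentially the same route as the paper's: decompose $\gamma$ along the dichotomy projections, annihilate the stable component by letting $\tau\to-\infty$ in the decay estimate for $U_P$, and annihilate the unstable component by letting $t\to+\infty$ via the estimate on $U_Q(t,y)^{-1}$. The only cosmetic difference is that you bound $|q(t)|$ uniformly through the projection bound $\Vert P(y)\Vert\le\mathcal N$ (read off from (\ref{eqPQ1}) at $t=0$), whereas the paper bounds the forward orbit of the unstable component by writing it as the difference $\gamma(t)-U_P(t-\tau,\sigma(\tau,y_0))\gamma(\tau)$ of two bounded terms; both yield the same constant $(1+\mathcal N)M$ and the arguments are otherwise identical.
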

\begin{proof}
Note that
$$
\gamma(t)=U(t-\tau,\sigma(\tau,y_0))\gamma(\tau)=U(t-\tau,\sigma(\tau,y_0))P(\sigma(\tau,y_0))\gamma(\tau)+
$$
$$
U(t-\tau,\sigma(\tau,y_0))Q(\sigma(\tau,y_0))\gamma(\tau)
$$
for any $t\ge \tau$ and, consequently,
$C_1:=\sup\{|U(t-\tau,\sigma(\tau,y_0))Q(\sigma(\tau,y))\gamma(\tau)|:\
t\ge \tau\}<+\infty$. According to (\ref{eqPQ1}) we have
\begin{equation}\label{eqPQ2}
C_1\ge
|U(t-\tau,\sigma(\tau,y_0))Q(\sigma(\tau,y))\gamma(\tau)|\ge
\mathcal N e^{\nu (t-\tau)}|Q(\sigma(\tau,y_0))|\nonumber
\end{equation}
for any $t\ge \tau$ and, consequently,
$Q(\sigma(\tau,y_0))\gamma(\tau)=0$ for any $\tau \in \mathbb R$.
This means that $\gamma(\tau)=P(\sigma(\tau,y_0))\gamma(\tau)$ for
any $\tau\in\mathbb R$.

On the other hand $\gamma
(t)=U(t-\tau,\sigma(\tau,y_0))\gamma(\tau)=U(t-\tau,\sigma(\tau,y_0))Q(\sigma(\tau,y_0))\gamma(\tau)$
for any $t\ge \tau$. Taking into account (\ref{eqPQ1}) we obtain
\begin{equation}\label{eqPQ3}
|\gamma(t)|\le \mathcal N e^{-\nu (t-\tau)}C
\end{equation}
for any $t\ge \tau$ and $\tau\in \mathbb R$, where
$C:=\sup\{|\gamma(\tau)|:\ \tau\in\mathbb R\}$. Passing into limit
in (\ref{eqPQ3}) as $\tau \to -\infty$ we obtain $\gamma(t)=0$ for
any $t\in \mathbb R$. Lemma is proved.
\end{proof}

\subsection{Relationship between different definitions of
hyperbolicity}

Along with classical definition of hyperbolicity (Definition
\ref{defED1}) we will use an other definition given below. And in
this Subsection we establish the relation between two definitions
of hyperbolicity for linear homogeneous differential equations
with continuous (bounded) coefficients.

Let $A\in C(\mathbb R,\Lambda)$, $\Sigma_{A}:=\{A^{\tau}:\
A^{\tau}(t):=A(t+\tau)$\index{$\Sigma_{A}$} for all $t\in \mathbb
R\}$ and $H(A):=\overline{\Sigma}_{A}$\index{$H(A)$}, where by bar
is denoted the closure of the set $\Sigma_{A}$ in $C(\mathbb
R,\lambda)$. In this Subsection we will suppose that the
operator-function $A\in C(\mathbb R,\Lambda)$ is
regular\index{regular operator-function}, i.e., for all $B\in
H(A)$ there exists a unique solution $\varphi(t,u,B)$ of equation
\begin{equation}\label{eqUC02}
x'=B(t)x \ \ (B\in \Sigma_{A})
\end{equation}
with initial data $\varphi(0,u,B)=u$ defined on $\mathbb R_{+}$.

\begin{definition}\label{defED_c}  (Classical definition \cite{Cop_1978,DK_1970}) Let $A\in C(\mathbb R,{E})$
. Linear differential equation
\begin{equation}\label{eqA}
x'=A(t)x
\end{equation}
satisfies the exponential dichotomy if there exist a projection
$P(A):E\to E$ and the positive constants $\mathcal N$ and $\nu >0$
such that
\begin{eqnarray}\label{eqED_a}
& ||U(t,A)P(A)U^{-1}(\tau,A)||\le \mathcal N e^{-\nu (t-\tau)}\
\mbox{for any}\ t> \tau \nonumber \\ &\mbox{and}\\ \nonumber&
||U(t,A)(I-P(A))U^{-1}(\tau,A)||\le \mathcal N e^{\nu(t-\tau)}\
\mbox{for any}\ t<\tau .
\end{eqnarray}
\end{definition}

\begin{lemma}\label{lH}\cite[ChIII]{Che_2009} Suppose that equation (\ref{eqA})
satisfies the exponential dichotomy, $A\in C(\mathbb R,[E])$ and
$B\in H(A)$. Then equation (\ref{eqUC02}) also satisfies the
exponential dichotomy.
\end{lemma}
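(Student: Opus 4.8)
The plan is to exploit the shift structure of the hull together with the roughness of dichotomy being trivial for pure translates. First I would record how the Cauchy operator transforms under translation: if $U(t,A)$ denotes the solution operator of (\ref{eqA}) normalized by $U(0,A)=I$, then for the translate $A^{\tau}(\cdot)=A(\cdot+\tau)$ one checks directly that $V(t):=U(t+\tau,A)U^{-1}(\tau,A)$ solves $V'=A^{\tau}(t)V$ with $V(0)=I$, whence $U(t,A^{\tau})=U(t+\tau,A)U^{-1}(\tau,A)$. Substituting this, together with the candidate projection $P(A^{\tau}):=U(\tau,A)P(A)U^{-1}(\tau,A)$, into the left-hand sides of (\ref{eqED_a}) collapses the interior factors $U^{-1}(\tau,A)U(\tau,A)=I$ and yields $U(t,A^{\tau})P(A^{\tau})U^{-1}(s,A^{\tau})=U(t+\tau,A)P(A)U^{-1}(s+\tau,A)$; hence every translate $A^{\tau}\in\Sigma_A$ satisfies the exponential dichotomy with the \emph{same} constants $\mathcal N,\nu$. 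Letting $t\to\tau^{+}$ (respectively $t\to\tau^{-}$) in (\ref{eqED_a}) also gives the uniform bounds $\|P(A^{\tau})\|\le\mathcal N$ and $\|I-P(A^{\tau})\|\le\mathcal N$ for all $\tau$.

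The second step is the passage to the limit over the hull. Fix $B\in H(A)$ and a sequence $\tau_n$ with $A^{\tau_n}\to B$ in $C(\mathbb R,[E])$, i.e. uniformly on compact $t$-intervals. Since convergence in the compact-open topology entails uniform boundedness of the coefficients on each compact interval, Gronwall's inequality supplies continuous dependence: $U(t,A^{\tau_n})\to U(t,B)$ and $U^{-1}(t,A^{\tau_n})\to U^{-1}(t,B)$ in operator norm, uniformly for $t$ in compacta. It then remains to produce a dichotomy projection $P(B)$ from the uniformly bounded family $P(A^{\tau_n})$ and to pass to the limit in the two inequalities (\ref{eqED_a}); because the exponential factors $\mathcal N e^{\mp\nu(t-s)}$ are independent of $n$, the dichotomy estimates for $B$ follow at once as soon as the projections converge, and the limit operator inherits idempotency from $P(A^{\tau_n})^2=P(A^{\tau_n})$.

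The main obstacle is precisely this convergence of the projections when $E$ is infinite-dimensional, where a norm-bounded family of projections need not be compact. In the finite-dimensional case one simply extracts a norm-convergent subsequence $P(A^{\tau_n})\to P(B)$ and is done. In general I would not insist on norm convergence of $P(A^{\tau_n})$ itself, but transport the Green's function $G_A(t,s)$, equal to $U(t,A)P(A)U^{-1}(s,A)$ for $t\ge s$ and to $-U(t,A)(I-P(A))U^{-1}(s,A)$ for $t<s$, which obeys $\|G_A(t,s)\|\le\mathcal N e^{-\nu|t-s|}$. Along the sequence one has $G_{A^{\tau_n}}(t,s)=G_A(t+\tau_n,s+\tau_n)$, a family uniformly bounded by $\mathcal N e^{-\nu|t-s|}$; using the established convergence of $U(\cdot,A^{\tau_n})$ and $U^{-1}(\cdot,A^{\tau_n})$ one identifies the limit with a Green's function $G_B(t,s)$ for $B$, whose unit jump at $t=s$ recovers a bounded projection $P(B)=G_B(0^{+},0)$ satisfying (\ref{eqED_a}). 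Equivalently, one may characterize the dichotomy of the bounded operator $A$ through the admissibility of the pair $\bigl(C_b(\mathbb R,E),C_b(\mathbb R,E)\bigr)$ in the sense of \cite{DK_1970}, a property manifestly stable under the uniform-on-compacta limits defining $H(A)$; this route bypasses explicit projection convergence altogether. Either way, the uniform constants obtained in the first step are exactly what make the limiting dichotomy for $B$ quantitatively identical to that of (\ref{eqA}).
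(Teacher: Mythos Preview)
The paper does not prove this lemma; it is quoted from \cite[ChIII]{Che_2009} and used as a black box in the proof of Lemma~\ref{lDP2}. So there is no ``paper's proof'' to compare against, and I can only assess your argument on its own merits.

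Your first step---transporting the dichotomy to every translate $A^{\tau}$ with the \emph{same} constants via $P(A^{\tau})=U(\tau,A)P(A)U^{-1}(\tau,A)$---is correct and is exactly the computation the paper records later as \eqref{eqDP13_1}--\eqref{eqDP16}. The uniform bound $\|P(A^{\tau})\|\le\mathcal N$ is also right.

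The gap is in the passage to $B\in H(A)\setminus\Sigma_A$ when $\dim E=\infty$. Both escape routes you sketch are circular or unjustified. For the Green's function route: $G_{A^{\tau_n}}(t,s)=U(t,A^{\tau_n})P(A^{\tau_n})U^{-1}(s,A^{\tau_n})$ for $t>s$, so once you already know $U,U^{-1}$ converge, convergence of $G_{A^{\tau_n}}$ is \emph{equivalent} to convergence of $P(A^{\tau_n})$---you have not bypassed the obstacle, only relabelled it. The uniform bound $\|G_{A^{\tau_n}}(t,s)\|\le\mathcal N e^{-\nu|t-s|}$ gives no subsequential limit in operator norm in a general Banach space. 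For the admissibility route: the claim that $(C_b,C_b)$-admissibility is ``manifestly stable under uniform-on-compacta limits'' is false as stated. Admissibility is a \emph{global} property on $\mathbb R$ (existence and uniqueness of a bounded solution for every bounded forcing), whereas convergence in $C(\mathbb R,[E])$ is local; one cannot pass to the limit in $\sup_{t\in\mathbb R}$ estimates using only uniform convergence on compacta without additional work.

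What actually closes the argument in infinite dimensions is to \emph{define} the stable and unstable subspaces of $B$ intrinsically---e.g.\ $\mathrm{Ran}\,P(B)=\{x:\sup_{t\ge 0}|U(t,B)x|<\infty\}$ and $\mathrm{Ker}\,P(B)$ as initial data of solutions bounded on $(-\infty,0]$---and then use the uniform dichotomy of the approximants $A^{\tau_n}$ to show these subspaces are closed, complementary, and yield a bounded projection satisfying \eqref{eqED_a}; this is the line taken in \cite{DK_1970} and \cite[ChIII]{Che_2009}. Alternatively one may impose the extra hypothesis that $\{P(A^{\tau})\}_{\tau\in\mathbb R}$ is precompact in $[E]$, which is exactly the caveat the paper itself makes in Remark~\ref{remDP}. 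Your write-up would be complete in finite dimensions (this is essentially the argument inside Lemma~\ref{lDP2}), but as stated it does not establish the general Banach-space case.
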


\begin{definition}\label{defUC2} Differential equation (\ref{eqA})
is said to be hyperbolic (satisfies the condition of exponential
dichotomy), if there are two projections: $P,Q:H(A)\mapsto [E]$
($P^2(B)=P(B)$ and $Q^2(B)=Q(B)$ for all $B\in H(A)$) such that
\begin{enumerate}
\item the mappings $P$ and $Q$ are continuous; \item
$P(B)+Q(B)=Id_{E}$ for all $B\in H(A)$; \item
$U(t,B)P(B)=P(B^{t})U(t,B)$ for all $t\in\mathbb R_{+}$ and $B\in
H(A)$, where $U(t,B):=\varphi(t,\cdot,B)$; \item the mapping
$U_{Q}(t,B):=U(t,B)Q(B): Im(Q(B)\mapsto Im(Q(B))$ is invertible;
\item there are positive numbers $\mathcal N$ and $\nu$ such that
$||U_{P}(t,B)||\le \mathcal Ne^{-\nu t}$ and
$||[U_{Q}(t,B)]^{-1}||\le \mathcal Ne^{-\nu t}$ for any $t\ge 0$,
where $U_{P}(t,B):=U(t,B)P(B)$ for any $t\in \mathbb R_{+}$.
\end{enumerate}
\end{definition}

\begin{remark}\label{remUC2} Note that the definition above of the
hyperbolicity means that the cocycle $\langle
E,\varphi,(H(A),\mathbb R,\sigma)\rangle$ generated by equation
(\ref{eqA}) is hyperbolic in the sense of Definition \ref{defED1}.
\end{remark}

\begin{lemma}\label{lDP01} Let $A\in C(\mathbb R,[E])$.
If (\ref{eqA}) is hyperbolic in the sense of Definition
\ref{defUC2}, then it is also so in the classical sense.
\end{lemma}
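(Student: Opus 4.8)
The plan is to set the classical projection of Definition \ref{defED_c} equal to the value $P(A)$ of the projection-valued function $P$ from Definition \ref{defUC2} at the point $A\in H(A)$, to keep the same exponent $\nu$, and to pay for the passage between the two definitions by enlarging the constant $\mathcal N$. Throughout I would use two preliminary facts about $U(t,B)=\varphi(t,\cdot,B)$. First, since $A\in C(\mathbb R,[E])$ has bounded coefficients, $U(t,B)$ is invertible on all of $E$ for every $t\in\mathbb R$ and $B\in H(A)$, with inverse the backward evolution, and the evolution identity $U(t+\tau,B)=U(t,B^{\tau})U(\tau,B)$ holds. Second, the intertwining relation (iii) of Definition \ref{defUC2}, $U(t,B)P(B)=P(B^{t})U(t,B)$, together with (ii) yields $U(t,B)Q(B)=Q(B^{t})U(t,B)$; applying these at the translated point $B^{t}$ with time $-t$ (for $t<0$) and using $U(-t,B^{t})=U(t,B)^{-1}$, both relations extend from $t\ge 0$ to all $t\in\mathbb R$.

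For the stable estimate I would take $t>\tau$, move $P(A)$ across $U(\tau,A)^{-1}$ by the extended intertwining, and use $U(t,A)U(\tau,A)^{-1}=U(t-\tau,A^{\tau})$ to get
\[
U(t,A)P(A)U^{-1}(\tau,A)=U(t-\tau,A^{\tau})P(A^{\tau})=U_{P}(t-\tau,A^{\tau}).
\]
The first inequality of (\ref{eqED_a}) is then immediate from condition (v), $\|U_{P}(t-\tau,A^{\tau})\|\le \mathcal N e^{-\nu(t-\tau)}$; note that no norm of $P$ enters, as it is already built into $U_{P}$.

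The unstable estimate is the only place I expect genuine difficulty. For $t<\tau$ set $s:=\tau-t>0$ and $B:=A^{t}$, so $B^{s}=A^{\tau}$; the same manipulations give
\[
U(t,A)Q(A)U^{-1}(\tau,A)=U(\tau-t,A^{t})^{-1}Q(A^{\tau})=U(s,B)^{-1}Q(B^{s}).
\]
The subtlety is that condition (v) only bounds $[U_{Q}(s,B)]^{-1}$, which a priori lives only on the subspaces $\mathrm{Im}\,Q(B^{s})\to \mathrm{Im}\,Q(B)$. Using the invertibility of $U(s,B)$ on $E$ and the relation $Q(B)U(s,B)^{-1}=U(s,B)^{-1}Q(B^{s})$, I would verify that $U(s,B)^{-1}$ carries $\mathrm{Im}\,Q(B^{s})$ into $\mathrm{Im}\,Q(B)$ and agrees there with $[U_{Q}(s,B)]^{-1}$, so that $U(s,B)^{-1}Q(B^{s})=[U_{Q}(s,B)]^{-1}Q(B^{s})$ on all of $E$. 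Taking norms gives
\[
\|U(t,A)Q(A)U^{-1}(\tau,A)\|\le \|[U_{Q}(s,B)]^{-1}\|\,\|Q(B^{s})\|\le \mathcal N e^{-\nu(\tau-t)}\,\|Q(A^{\tau})\|.
\]

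It remains to control $\|Q(A^{\tau})\|$ uniformly in $\tau$, and the key point is that this needs no compactness of $H(A)$ but follows from condition (v) alone: evaluating $U_{P}$ at $t=0$ gives $U_{P}(0,B)=P(B)$, hence $\|P(B)\|\le \mathcal N$ and $\|Q(B)\|=\|\mathrm{Id}_{E}-P(B)\|\le 1+\mathcal N$ for every $B\in H(A)$. Since $e^{-\nu(\tau-t)}=e^{\nu(t-\tau)}$ for $t<\tau$, the unstable bound holds with constant $\mathcal N(1+\mathcal N)$. Setting $\mathcal N':=\mathcal N(1+\mathcal N)$ and keeping $\nu$, both inequalities in (\ref{eqED_a}) hold with the projection $P(A)$, which is exactly the classical exponential dichotomy of Definition \ref{defED_c}.
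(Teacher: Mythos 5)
Your proposal is correct and follows essentially the same route as the paper: keep the projection $P(A)$ and the exponent $\nu$, use the intertwining and evolution identities to rewrite $U(t,A)P(A)U^{-1}(\tau,A)$ as $U_{P}(t-\tau,A^{\tau})$ and $U(t,A)Q(A)U^{-1}(\tau,A)$ as $[U_{Q}(\tau-t,A^{t})]^{-1}Q(A^{\tau})$, and then invoke the bounds of Definition \ref{defUC2}. You are in fact more careful than the paper on the unstable estimate, where the paper simply identifies $U(t,A)Q(A)U^{-1}(\tau,A)$ with $(U_{Q})^{-1}$ and keeps the constant $\mathcal N$, silently absorbing both the restriction-to-$\mathrm{Im}\,Q$ subtlety and the factor $\Vert Q(A^{\tau})\Vert$, which you bound uniformly by $1+\mathcal N$ via the observation $U_{P}(0,B)=P(B)$.
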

\begin{proof}
Let (\ref{eqA}) be hyperbolic in the sense of Definition
\ref{defUC2}, $P(A): H(A)\mapsto [E]$ projection-operator and
$\mathcal N,\nu$ positive constants which figure in definition of
hyperbolicity.

Denote by $P(s):=U(s,A)P(A)U^{-1}(\tau,A)$ and
$Q(s):=Id_{E}-P(s)$. It easy to check that $P^{2}(s)=P(s)$ for any
$s\in \mathbb R$. Since equation (\ref{eqA}) is hyperbolic (in the
sense of Definition \ref{defUC2}), then
\begin{equation}\label{eqDP13.1}
||U(t,A^{s})P(s)||\le \mathcal N e^{-\nu t}
\end{equation}
for any $(t,s)\in \mathbb R_{+}\times \mathbb R$ and
\begin{equation}\label{eqDP14.1}
||U(t,A^{s})Q(s)||\le \mathcal N e^{-\nu t}
\end{equation}
for any $(t,s)\in \mathbb R_{+}\times\mathbb R 0$, where
$Q(A^{s}):=Id_{E}-P(A^{s})$.

Note that
\begin{equation}\label{eqDP15.1}
U(t,A)P(A)U^{-1}(\tau,A)=U(t-\tau,A^{\tau})U(\tau,A)P(A)U^{-1}(\tau,A)=U(t-\tau,A^{\tau})P(s)
\end{equation}
for any $t>\tau$ and
\begin{equation}\label{eqDP16.1}
U(t,A)Q(A)U^{-1}(\tau,A)=U(t-\tau,A^{\tau})U(\tau,A)Q(A)U^{-1}(\tau,A)=U(t-\tau,A^{\tau})Q(s)
\end{equation}
for any $t<\tau$.

From (\ref{eqDP13.1})- (\ref{eqDP16.1}) it follows that
\begin{equation}\label{eqDP13}
||U(t,A)P(A)U^{-1}(\tau,A)||\le \mathcal N e^{-\nu (t-\tau)}
\end{equation}
and
\begin{equation}\label{eqDP14}
||U(t,A)Q(A)U^{-1}(\tau,A)||\le \mathcal N e^{-\nu (t-\tau)}
\end{equation}
for any $t>\tau$, because
$U(t,A)Q(A)U^{-1}(\tau,A)=(U_{Q}(t,\tau))^{-1},$ where
$U_{Q}(t,\tau)=U(t-\tau,A^{\tau})Q(A^{\tau})$. Lemma is proved.
\end{proof}

\begin{lemma}\label{lDP2} Let $E$ be a finite-dimensional Banach space.
Then, if equation (\ref{eqA}) is hyperbolic (in the sense of
Definition \ref{defED_c}), then it is also hyperbolic in the sense
of Definition \ref{defUC2}.
\end{lemma}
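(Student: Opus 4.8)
The plan is to produce, for every $B\in H(A)$, the dichotomy projection $P(B)$ of the limiting equation $x'=B(t)x$ from its stable and unstable subspaces, and then to check the five conditions of Definition \ref{defUC2}; the continuity of $B\mapsto P(B)$ will be the only delicate point, and it is precisely here that finite-dimensionality is used.

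First I would invoke Lemma \ref{lH}: since (\ref{eqA}) has a classical exponential dichotomy and $A\in C(\mathbb R,[E])$, every equation $x'=B(t)x$ with $B\in H(A)$ has one as well. A direct computation shows that a shift $A^{\tau}$ admits a dichotomy with the \emph{same} constants $\mathcal N,\nu$, because $U(t,A^{\tau})P(A^{\tau})U^{-1}(s,A^{\tau})=U(t+\tau,A)P(A)U^{-1}(s+\tau,A)$; since the non-strict estimates in (\ref{eqED_a}) pass to limits, all $B\in H(A)$ admit a dichotomy with these common constants $\mathcal N,\nu$. For each $B$ set (using that $U(t,B)$ extends to an invertible operator for all $t\in\mathbb R$ in finite dimension) $E^s(B):=\{v\in E:\ \sup_{t\ge 0}|U(t,B)v|<\infty\}$ and $E^u(B):=\{v\in E:\ \sup_{t\le 0}|U(t,B)v|<\infty\}$. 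Exponential dichotomy gives $E=E^s(B)\oplus E^u(B)$ with $E^s(B)=\operatorname{Im}P(B)$ and $E^u(B)=\operatorname{Im}Q(B)$, and the uniqueness of this whole-line splitting makes $P(B)$ well defined; putting $Q(B):=\mathrm{Id}_E-P(B)$ secures condition (ii).

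Next I would dispatch conditions (iii)--(v) using that $U(t,B)$ is a linear isomorphism of $E$. From $v\in E^s(B)$ and $U(s,B^{t})U(t,B)v=U(s+t,B)v\to 0$ one gets $U(t,B)E^s(B)\subseteq E^s(B^{t})$, and similarly $U(t,B)E^u(B)\subseteq E^u(B^{t})$; applying the isomorphism $U(t,B)$ to $E=E^s(B)\oplus E^u(B)$ and comparing with $E=E^s(B^{t})\oplus E^u(B^{t})$ forces $U(t,B)E^s(B)=E^s(B^{t})$ and $U(t,B)E^u(B)=E^u(B^{t})$. This gives the intertwining $U(t,B)P(B)=P(B^{t})U(t,B)$ (condition (iii)) and the invertibility of $U_Q(t,B)\colon\operatorname{Im}Q(B)\to\operatorname{Im}Q(B^{t})$ (condition (iv)). Taking $\tau=0$ in the first estimate of (\ref{eqED_a}) yields $\|U_P(t,B)\|\le\mathcal N e^{-\nu t}$; and since $[U_Q(t,B)]^{-1}=Q(B)U^{-1}(t,B)$ on $\operatorname{Im}Q(B^{t})$, while the second estimate of (\ref{eqED_a}) gives $\|Q(B)U^{-1}(t,B)\|\le\mathcal N e^{-\nu t}$, we obtain $\|[U_Q(t,B)]^{-1}\|\le\mathcal N e^{-\nu t}$, which is condition (v) with the same $\mathcal N,\nu$.

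The main obstacle is condition (i), continuity of $B\mapsto P(B)$. I would deduce it from continuity of $B\mapsto E^s(B)$ and $B\mapsto E^u(B)$ into the Grassmannian, since the projection onto $E^s(B)$ along $E^u(B)$ depends continuously on a pair of complementary subspaces. Fix $B_n\to B$ in $H(A)$. Passing to a subsequence I may assume $\dim E^s(B_n)=k$ and $\dim E^u(B_n)=\dim E-k$ are constant and, by compactness of the Grassmannians, that $E^s(B_n)\to V$ and $E^u(B_n)\to W$ with $\dim V=k$ and $\dim W=\dim E-k$. Continuity of the cocycle gives $U(t,B_n)\to U(t,B)$ uniformly on compact $t$-intervals; combining this with the uniform bounds $|U(t,B_n)v|\le\mathcal N e^{-\nu t}|v|$ for $v\in E^s(B_n),\ t\ge 0$, and $|U(t,B_n)v|\le\mathcal N e^{\nu t}|v|$ for $v\in E^u(B_n),\ t\le 0$, and letting $n\to\infty$ along vectors, I obtain $V\subseteq E^s(B)$ and $W\subseteq E^u(B)$. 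Since $E^s(B)\oplus E^u(B)=E$ and $\dim V+\dim W=\dim E$, these inclusions are equalities, so $V=E^s(B)$ and $W=E^u(B)$; in particular $k=\dim E^s(B)$ is forced for every such subsequence. Hence $E^s(B_n)\to E^s(B)$ and $E^u(B_n)\to E^u(B)$, the projection $P$ is continuous, and all conditions of Definition \ref{defUC2} are verified.
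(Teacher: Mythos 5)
Your proof is correct, but it takes a different route from the paper's. The paper never introduces stable/unstable subspaces: it defines $P$ on the translates by conjugation, $P(A^{s}):=U(s,A)P(A)U^{-1}(s,A)$, and then extends $P$ to all of $H(A)$ by a limit argument — for $B=\lim A^{\tau_n}$ the sequence $\{P(A^{\tau_n})\}$ is bounded, hence relatively compact in $[E]$ by finite-dimensionality, and any limit point must coincide with the (unique) dichotomy projection of the limit equation $y'=B(t)y$, which is hyperbolic by Lemma \ref{lH}; continuity of the extended $P:H(A)\mapsto [E]$ and the covariance identity $P(B^{\tau})=U(\tau,B)P(B)U^{-1}(\tau,B)$ are then verified by separate limit computations. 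You instead define $P(B)$ intrinsically for every $B\in H(A)$, as the projection onto the subspace of initial values with bounded forward orbit along those with bounded backward orbit; this makes conditions (ii)--(v) of Definition \ref{defUC2} essentially automatic (invariance of the bounded-solution subspaces under the cocycle replaces the paper's covariance computation), and you prove continuity via compactness of Grassmannians plus dimension counting, where the paper uses compactness of bounded operator sequences. Your version is arguably cleaner in that well-definedness and covariance come for free, but the paper's operator-limit argument is the one that survives in infinite dimensions under the hypothesis of Remark \ref{remDP} (precompactness of the family of projections), where your dimension count is unavailable. One point you should make explicit: your claim that every $B\in H(A)$ has a dichotomy with the \emph{common} constants $\mathcal N,\nu$ "because the non-strict estimates pass to limits" tacitly requires first extracting a convergent subsequence of the projections $P(A^{\tau_n})$ (again finite-dimensional compactness) before the estimates can be passed to the limit — this is exactly the compactness step the paper performs; it is true and fixable in one line, so it is an imprecision rather than a gap, but as written Lemma \ref{lH} alone does not give uniformity of the constants over the hull, and your Grassmannian argument genuinely needs that uniformity.
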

\begin{proof} Let $A\in C(\mathbb R,[E])$. Denote by $\Sigma_{A}$ the set of all
translations of $A$, i.e., $\Sigma_{A}=\{A^{s}:\ s\in\mathbb R\}$
and $A^{s}(t):=A(t+s)$ for all $t\in\mathbb R$. Suppose that
equation (\ref{eqA}) is hyperbolic with projection $P(A)$ and the
constants $\mathcal N>0$ and $\nu >0$ which figure in Definition
\ref{defED_c}. Let $B\in \Sigma_{A}$, then there is a number $s\in
\mathbb R$ such that $B=A^{s}$. We put
$P(B):=U(s,A)P(A)U^{-1}(s,A),$ then it easy to check that $P(B)$
is a projection (i.e., $P^2(B)=P(B)$). Note that
\begin{equation}\label{eqDP13_1}
U(t,A^{s})P(A^{s})U(\tau,A^{s})=U(t+s,A)P(A)U^{-1}(\tau +s,A)
\end{equation}
and
\begin{equation}\label{eqDP14_1}
U(t,A^{s})Q(A^{s})U(\tau,A^{s})=U(t+s,A)Q(A)U^{-1}(\tau +s,A)
\end{equation}
for all $t,\tau,s\in \mathbb R$, where
$Q(A^{s}):=Id_{E}-P(A^{s})$. From (\ref{eqED_a}), (\ref{eqDP13_1})
and (\ref{eqDP14_1}) it follows that
\begin{equation}\label{eqDP15}
||U(t,B)P(B)U^{-1}(\tau,B)||\le \mathcal N e^{-\nu (t-\tau)} \
\mbox{for all}\ t>\tau
\end{equation}
and
\begin{equation}\label{eqDP16}
||U(t,B)Q(B)U^{-1}(\tau,B)||\le \mathcal N e^{-\nu (t-\tau)} \
\mbox{for all}\ t<\tau ,
\end{equation}
for all $B\in \Sigma_{A}$.
Denote by $H(A):=\overline{\{A^{s}:\ s\in\mathbb R\}}$, where by
bar is denoted the closure in the space $C(\mathbb R,[E])$. will
show that the mapping $P:\Sigma_{A}\mapsto [E]$ admits a unique
extension $P:H(A)\mapsto [E]$ possessing the following properties:
\begin{enumerate}
\item the mapping $P:H(A)\mapsto [E]$ is continuous; \item
$P(B^{\tau})=U(\tau,B)P(B)U^{-1}(\tau,B)$ for all $\tau\in \mathbb
R$ and $B\in H(A)$.
\end{enumerate}
Let $B\in H(A)$, then there exists a sequence $\{\tau_n\}\subset
\mathbb R$ such that $B=\lim\limits_{n\to \infty}A^{\tau_n}$.
Consider the sequence $\{P(A^{\tau_n}\}\subset [E]$ (respectively
$\{Q(A^{\tau_n}\}\subset [E]$). Under the conditions of Lemma
\ref{lDP2} the sequence $\{P(A^{\tau_n}\}$ (respectively
$\{Q(A^{\tau_n}\}$) is relatively compact in $[E]$. Now we will
establish that the sequence $\{P(A^{\tau_n}\}$ (respectively
$\{Q(A^{\tau_n}\}$ admits at most one limiting point. In fact, if
$P'$ (respectively $Q'$) is a limiting point of
$\{P(A^{\tau_n})\}$ then there exists a subsequence
$\{\tau_{n}'\}\subseteq \{\tau_n\}$ such that
$P'=\lim\limits_{n\to \infty}P(A^{\tau_n'})$,
$Q'=\lim\limits_{n\to \infty}Q(A^{\tau_n'})$, $P'^2=P'$
(respectively, $Q'^2=Q'$) and $P'+Q'=I_{E}$. By Lemma \ref{lH} the
equation
\begin{equation}\label{eqEDB}
y'=B(t)y
\end{equation}
is hyperbolic with the projections $P(B)$ and $Q(B)$ and theses
projections are defined uniquely. From the last fact we obtain
that $P'=P(B)$ (respectively, $Q'=Q(B)$, i.e., the sequence
$\{P(A^{\tau_n})\}$ (respectively, $P'=P(B)$ (respectively,
$Q'=Q(B)$, i.e., the sequence $\{P(A^{\tau_n})\}$) admits a unique
limiting point $P(B)$ (respectively, $Q(B)$). It is clear that the
mapping $P:\Sigma_{A}\mapsto [E]$ (respectively,
$Q:\Sigma_{A}\mapsto [E]$) admits a (unique) extension on $H(A)$
and it is defined by equality $P(B):=\lim\limits_{n\to
\infty}P(A^{\tau_n})$ (respectively, $P(B):=\lim\limits_{n\to
\infty}P(A^{\tau_n})$), where $\{\tau_n\}$ is a subsequence of
$\mathbb R$ such that $B=\lim\limits_{n\to \infty}A^{\tau_n})$.

Now we will prove that the mapping $P:H(A)\mapsto [E]$
(respectively, $Q:H(A)\mapsto [E]$) is continuous. Let $B\in H(A)$
and $\{B_k\}$ be a subset of $H(A)$ such that $d(B,B_{k})\le 1/k$
for all $k\in\mathbb N$. Since $B_k\in H(A)$, then there exists a
sequence $\{\tau_n^{k}\}$ such that $B_{k}=\lim\limits_{n\to
\infty}A_{\tau_n^{k}}$. Let $n_k\in\mathbb N$ such that
\begin{equation}\label{eqEDB1}
d(B_k,A^{\tau_{n}^{k}})\le 1/k\ \ \mbox{and}\ \
||P(B_k)-P(A^{\tau_{n}^{k}})||\le 1/k
\end{equation}
for all $n\ge n_{k}$. Consider the sequence $\{\tau_{k}'\}$, where
$\tau_{k}':=\tau_{n_k}^{k}$, and note that from (\ref{eqEDB1}) we
have $\lim\limits_{k\to \infty}A^{\tau_{k}'}=B$ and, consequently
(see above)
\begin{equation}\label{eqEDB2}
\lim\limits_{k\to \infty}P(A^{\tau_{k}'})=P(B).
\end{equation}
Thus we have
\begin{equation}\label{eqEDB3}
||P(B_k)-P(B)||\le
||B_k-P(A^{\tau_{k}'})||+||P(A^{\tau_{k}'})-P(B)||
\end{equation}
for all $k\in \mathbb N$. Passing to limit in (\ref{eqEDB3}) as
$k\to \infty$ and taking in consideration (\ref{eqEDB1}) and
(\ref{eqEDB2}) we obtain $P(B)=\lim\limits_{k\to \infty}P(B_k)$,
i.e., the mapping $P:H(A)\mapsto [E]$ is continuous. Analogously
may be established the continuity of the mapping $Q:H(A)\mapsto
[E]$.

Let $B\in H(A)$ and $\tau \in \mathbb R$, then there exists a
sequence $\{\tau_n\}\subset \mathbb R$ such that $A^{\tau_n}\to B$
as $n\to \infty$ and, consequently, $U(t,A^{\tau_n})\to U(t,B)$,
$U^{-1}(\tau,B)\to U^{-1}(\tau,B)$ and $P(A^{\tau_n})\to P(B)$ as
$n\to \infty$. From the above facts we obtain
\begin{eqnarray}\label{eqEDB4}
&P(B^{\tau})=P(\lim\limits_{n\to \infty}A^{\tau
+\tau_{n}})=\lim\limits_{n\to \infty}P(A^{\tau +\tau_{n}})=
 \\
&\lim\limits_{n\to
\infty}U(\tau,A^{\tau_{n}})P(A^{\tau_n})U^{-1}(\tau,A^{\tau_{n}})=U(\tau,B)P(B)U^{-1}(\tau,B).\nonumber
\end{eqnarray}
Analogously we can prove that
$Q(B^{\tau})=U(\tau,B)Q(B)U^{-1}(\tau,B)$. Thus equation
(\ref{eqA}) is hyperbolic in the sense of Definition \ref{defUC2}.
Lemma is proved.
\end{proof}

\begin{lemma}\label{l13.2.1}\cite[Ch.III]{Che_2012}
Let $A\in C(\mathbb R,[E])$ and $\varphi(t,u,A)$ be a unique
solution of equation (\ref{eqA}) with initial data
$\varphi(0,u,A)=u$. Then the following statements hold:
\begin{enumerate}
\item[(i)] the map $A\mapsto U ( \cdot , A)$ of $C(\mathbb R,
[E])$ to $C(\mathbb R, [E])$ is continuous, where $U (t, A)$ is
the Cauchy's operator \cite{DK_1970} of equation (\ref{eqA}) and
\item[(ii)] the map $(t, u, A)\mapsto \varphi(t, u, A)$ of
$\mathbb R\times E \times C(\mathbb R, [E])$ to $E$ is continuous.
\end{enumerate}
\end{lemma}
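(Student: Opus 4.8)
The plan is to reduce everything to the Volterra integral form of (\ref{eqA}) and to apply Gronwall's inequality, keeping in mind that the compact-open topology on $C(\mathbb R,[E])$ is simply uniform convergence on compact intervals. First I would record that the Cauchy operator $U(\cdot,A)$ is the unique element of $C(\mathbb R,[E])$ solving
\[
U(t,A)=Id_{E}+\int_0^t A(s)U(s,A)\,ds,\qquad t\in\mathbb R,
\]
obtained by integrating $U'(t,A)=A(t)U(t,A)$ with $U(0,A)=Id_{E}$; existence and uniqueness are guaranteed because $A$ takes values in the \emph{bounded} operators $[E]$.

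For statement (i), let $A_n\to A$ in $C(\mathbb R,[E])$ and fix $l>0$. On $[-l,l]$ the convergence is uniform, so $M:=\sup_n\sup_{|s|\le l}\|A_n(s)\|<\infty$ and $\delta_n:=\sup_{|s|\le l}\|A_n(s)-A(s)\|\to 0$. Applying Gronwall to the integral equation gives the a priori bound $\|U(t,A_n)\|\le e^{M|t|}$, so the operators $U(s,A_n)$ are uniformly bounded on $[-l,l]$ and $C:=\sup_{|s|\le l}\|U(s,A)\|<\infty$. Subtracting the integral equations for $A_n$ and $A$ and splitting
\[
A_n(s)U(s,A_n)-A(s)U(s,A)=A_n(s)\bigl[U(s,A_n)-U(s,A)\bigr]+\bigl[A_n(s)-A(s)\bigr]U(s,A),
\]
I obtain for $g_n(t):=\|U(t,A_n)-U(t,A)\|$ on $[0,l]$ the inequality $g_n(t)\le \delta_n C l+M\int_0^t g_n(s)\,ds$, whence Gronwall yields $g_n(t)\le \delta_n C l\,e^{Ml}$; the same argument works on $[-l,0]$. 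Thus $\sup_{|t|\le l}g_n(t)\to 0$, i.e. $U(\cdot,A_n)\to U(\cdot,A)$ uniformly on every compact interval, which is exactly convergence in $C(\mathbb R,[E])$, proving that $A\mapsto U(\cdot,A)$ is continuous.

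For statement (ii), I would write $\varphi(t,u,A)=U(t,A)u$ and take $(t_n,u_n,A_n)\to(t,u,A)$, noting that the $t_n$ lie in a fixed compact interval $[-l,l]$. From (i), $\sup_{|s|\le l}\|U(s,A_n)-U(s,A)\|\to 0$, and since $s\mapsto U(s,A)$ is continuous, the estimate
\[
\|U(t_n,A_n)-U(t,A)\|\le \sup_{|s|\le l}\|U(s,A_n)-U(s,A)\|+\|U(t_n,A)-U(t,A)\|
\]
shows $U(t_n,A_n)\to U(t,A)$ in $[E]$. Combining this with the uniform bound on $\|U(t_n,A_n)\|$ in
\[
\|U(t_n,A_n)u_n-U(t,A)u\|\le \|U(t_n,A_n)\|\,\|u_n-u\|+\bigl\|\bigl(U(t_n,A_n)-U(t,A)\bigr)u\bigr\|
\]
gives the right-hand side $\to 0$, establishing joint continuity of $\varphi$.

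The computation itself is routine Gronwall; the only point requiring genuine care, and hence the main obstacle, is the interplay between the topology and the localization: because $A_n\to A$ only uniformly on compacta, the constants $M$, $C$, and $\delta_n$ must be chosen relative to the fixed interval $[-l,l]$, and the conclusion must be phrased throughout as uniform convergence on compact intervals rather than on all of $\mathbb R$. Once this localization is handled consistently, (i) follows from a single Gronwall estimate and (ii) is a short consequence of (i) together with the boundedness of $U$.
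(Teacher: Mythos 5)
Your proof is correct, and it is complete as written: the reduction to the Volterra equation $U(t,A)=Id_{E}+\int_0^t A(s)U(s,A)\,ds$, the a priori bound $\|U(t,A_n)\|\le e^{M|t|}$, the splitting of $A_n(s)U(s,A_n)-A(s)U(s,A)$, and the Gronwall estimate localized to $[-l,l]$ together give exactly the compact-open convergence required in (i), and (ii) then follows from (i) by the triangle-inequality argument you give. Note, however, that the paper itself contains no proof of this lemma --- it is quoted from Ch.~III of \cite{Che_2012} --- so there is nothing in the paper to compare against; your argument is the standard one for this statement (integral equation plus Gronwall) and is presumably the same route as the cited source. Two points you handle implicitly but could state explicitly: sequential arguments suffice because $C(\mathbb R,[E])$ with the compact-open topology is metrizable (the paper displays the metric $d$ at the start of Section 5), and on $[-l,0]$ the Gronwall inequality must be applied after reversing time, i.e.\ with $\int_t^0$ in place of $\int_0^t$, which is what your remark about consistent localization amounts to.
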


\begin{theorem}\label{thDP1} Let $A\in C(\mathbb R,[E])$ and $E$ be finite-dimensional. Then equation (\ref{eqA}) is
hyperbolic in the sense of Definition \ref{defUC2} if and only if
it is hyperbolic in the sense of Definition \ref{defED_c}.
\end{theorem}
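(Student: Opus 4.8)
The plan is to obtain the theorem as an immediate corollary of the two preceding lemmas, since together they supply both implications of the desired equivalence, with finite-dimensionality entering only one of them.

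First I would establish that hyperbolicity of equation (\ref{eqA}) in the sense of Definition \ref{defUC2} implies hyperbolicity in the classical sense of Definition \ref{defED_c}. This is precisely the assertion of Lemma \ref{lDP01}, which holds for an arbitrary Banach space $E$; in particular it applies to the finite-dimensional $E$ of the present theorem, so this direction requires no further argument.

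For the converse, suppose that equation (\ref{eqA}) is hyperbolic in the classical sense of Definition \ref{defED_c}. Since $E$ is finite-dimensional, Lemma \ref{lDP2} applies and yields at once that (\ref{eqA}) is hyperbolic in the sense of Definition \ref{defUC2}. Combining the two implications gives the stated equivalence, and the proof is complete.

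I expect the genuine content to reside inside Lemma \ref{lDP2} rather than in the assembly above. The construction there extends the projection-valued map $P$ from the orbit $\Sigma_A$ to the whole hull $H(A)$ by passing to limits $P(B):=\lim_{n\to\infty}P(A^{\tau_n})$, and the existence of such limits hinges on the relative compactness in $[E]$ of the uniformly bounded family $\{P(A^{\tau_n})\}$. That compactness is exactly where the local compactness of the finite-dimensional space $[E]$ is used; in infinite dimensions it may fail, which is why the converse implication is stated only under the finite-dimensionality hypothesis. Accordingly, if anything in this circle of ideas is delicate, it is that compactness step, not the bookkeeping of the equivalence itself.
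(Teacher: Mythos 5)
Your proposal is correct and follows exactly the paper's own proof: Theorem \ref{thDP1} is obtained by combining Lemma \ref{lDP01} (Definition \ref{defUC2} implies Definition \ref{defED_c}, valid in any Banach space) with Lemma \ref{lDP2} (the converse under finite-dimensionality). Your closing remark about the finite-dimensionality being used precisely for the relative compactness of the projection family $\{P(A^{\tau_n})\}$ in $[E]$ also matches the paper, which makes the same point in Remark \ref{remDP}.
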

\begin{proof} This statement follows from Lemmas \ref{lDP01} and
\ref{lDP2}.
\end{proof}

\begin{remark}\label{remDP} Theorem \ref{thDP1} (respectively, Lemma \ref{lDP2}) remains true also
for the infinite-dimensional equations under the following
condition: $A\in C(\mathbb R,[E])$ and the family of projections
$\{P(t)\}:=\{U(t,A)P(A)U^{-1}(t,A):\ t\in\mathbb R\}\subset [E]$
is precompact in $[E]$.
\end{remark}

\begin{theorem}\label{thDP2} Suppose that the following conditions
are fulfilled:
\begin{enumerate}
\item the operator-function $A\in C(\mathbb R,[E])$ is
$\tau$-periodic (respectively, quasi-pe\-ri\-o\-dic, almost
periodic, almost automorphic, recurrent); \item equation
(\ref{eqA}) is hyperbolic (satisfies the condition of exponential
di\-cho\-to\-my).
\end{enumerate}

Then the operator-function the operator-functions
$P(t):=U(t,A)P(A)U^{-1}(t,A)$ and $Q(t):=U(t,A)Q(A)U^{-1}(t,A)$
are $\tau$-periodic (respectively, quasi-pe\-ri\-o\-dic, almost
periodic, almost automorphic, recurrent) and $\mathfrak
M_{A}\subseteq \mathfrak M_{P}$.
\end{theorem}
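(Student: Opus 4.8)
The plan is to exhibit $P(\cdot)$ as the image of $A$ under a continuous homomorphism of Bebutov shift systems, and then to read off its recurrence type from the comparability theory of Sections 2 and 3. The starting observation is that $P(t)$ is simply the value at the shifted point $A^{t}$ of the dichotomy-projection map on the hull. Since hyperbolicity is understood in the sense of Definition \ref{defUC2}, there is a \emph{continuous} map $\mathcal{P}:H(A)\to[E]$, $B\mapsto P(B)$, assigning to each $B$ the dichotomy projection of $x'=B(t)x$; item (iii) of Definition \ref{defUC2} together with the invertibility of the Cauchy operator $U(t,A)$ for bounded $A$ (and the cocycle identity, which extends the relation to negative times) gives $\mathcal{P}(A^{t})=U(t,A)P(A)U^{-1}(t,A)=P(t)$ for all $t\in\mathbb{R}$. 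Defining $\Psi:H(A)\to C(\mathbb{R},[E])$ by $\Psi(B)(s):=\mathcal{P}(B^{s})$, one therefore has $\Psi(A)=P(\cdot)$, and since $\mathcal{P}((B^{\tau})^{s})=\mathcal{P}(B^{\tau+s})$ the map $\Psi$ intertwines the shifts, i.e. $\Psi(\sigma(\tau,B))=\sigma(\tau,\Psi(B))$.

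The only genuinely non-formal step, and the main obstacle, is to check that $\Psi$ is continuous from $H(A)$ (compact-open topology) into $(C(\mathbb{R},[E]),d)$; this amounts to upgrading the pointwise convergence of dichotomy projections along the hull to convergence uniform on compacta. I would argue that the evaluation map $(s,B)\mapsto\mathcal{P}(\sigma(s,B))$ from $\mathbb{R}\times H(A)$ to $[E]$ is continuous, being the composition of the continuous Bebutov flow $\sigma$ with the continuous map $\mathcal{P}$ (the requisite continuity of the Cauchy operator underlying $\sigma$ is Lemma \ref{l13.2.1}). If $B_{n}\to B$ in $H(A)$, then $\{B_{n}\}\cup\{B\}$ is compact, so on each box $[-l,l]\times(\{B_{n}\}\cup\{B\})$ the evaluation map is uniformly continuous; hence $\sup_{|s|\le l}\|\mathcal{P}(B_{n}^{s})-\mathcal{P}(B^{s})\|\to0$ for every $l>0$, which is exactly $\Psi(B_{n})\to\Psi(B)$ in $C(\mathbb{R},[E])$. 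Thus $\Psi$ is a continuous homomorphism of $(H(A),\mathbb{R},\sigma)$ into $(C(\mathbb{R},[E]),\mathbb{R},\sigma)$ with $\Psi(A)=P$.

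Once $\Psi$ is a continuous homomorphism the recurrence conclusion follows at once. For any $\{t_{n}\}$ with $A^{t_{n}}$ convergent, $P^{t_{n}}=\Psi(A^{t_{n}})$ is convergent, and if $A^{t_{n}}\to A$ then $P^{t_{n}}\to P$; that is, $\mathfrak{N}_{A}\subseteq\mathfrak{N}_{P}$ and $\mathfrak{M}_{A}\subseteq\mathfrak{M}_{P}$. Since $C(\mathbb{R},[E])$ is complete, Theorem \ref{thC2} converts the inclusion $\mathfrak{M}_{A}\subseteq\mathfrak{M}_{P}$ into the statement that $P$ is strongly comparable by the character of recurrence with $A$; Remark \ref{r10G2}(2) then yields that $P(\cdot)$ is $\tau$-periodic (respectively quasi-periodic, almost periodic, almost automorphic, recurrent) whenever $A$ is. (Equivalently, for the $\tau$-periodic, quasi-periodic, almost periodic and recurrent cases one may invoke Remark \ref{r4.1*} directly, and for the almost automorphic case combine Corollary \ref{cor4.6*} with the Lagrange stability of $P=\Psi(A)$ as a continuous image of a precompact orbit.) Applying the same argument to the continuous map $\mathcal{Q}:=Id_{E}-\mathcal{P}$ disposes of $Q(t)=U(t,A)Q(A)U^{-1}(t,A)$, and the inclusion $\mathfrak{M}_{A}\subseteq\mathfrak{M}_{P}$ recorded above is precisely the final assertion of the theorem.
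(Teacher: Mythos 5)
Your proof is correct and follows essentially the same route as the paper: both exhibit $P(\cdot)$ as the image of $A$ under the continuous, shift-equivariant map $B\mapsto P_{B}$, $P_{B}(s)=U(s,B)P(B)U^{-1}(s,B)$, from $H(A)$ into $C(\mathbb{R},[E])$, and then transfer the recurrence of $A$ to $P$ via the comparability machinery (Remark \ref{r4.1*}, respectively Theorem \ref{thC2} and Remark \ref{r10G2}). The only differences are technical: the paper derives the continuity of $B\mapsto P(B)$ on $H(A)$ from the proof of Theorem \ref{thDP1} (reading the hypothesis as classical hyperbolicity, which is where finite-dimensionality, or Remark \ref{remDP}, implicitly enters) and upgrades it using Lemma \ref{l13.2.1}, whereas you take it directly from Definition \ref{defUC2} and upgrade it by a uniform-continuity-on-compacta argument; you also handle the almost automorphic case explicitly (via Corollary \ref{cor4.6*} and Remark \ref{r10G2}), a point the paper leaves implicit since Remark \ref{r4.1*} does not list almost automorphy.
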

\begin{proof} We will prove this statement for the
operator-function $P(t)$, because the statement for the
operator-function $Q(t)$ can be proved using the same arguments.

Let $A\in C(\mathbb R,[E])$ be  $\tau$-periodic (respectively,
quasi-periodic, almost periodic, almost automorphic, recurrent),
then the operator-function $A$ is bounded on $\mathbb R$ and by
Theorem \ref{thDP1} (see its proof) the function
$\tilde{h}:\Sigma_{A}\mapsto [E]$, defined by equality
$\tilde{h}(A^{s})=P(s)$ ($\forall \ s\in\mathbb R$), is uniformly
continuous function and, consequently, it admits a unique
continuous extension on $\overline{\Sigma}_{A}$. Consider the
mapping $h:H(A)\mapsto H(P)$ defined by $h(B)=P_{B}$ ($\forall \
B\in H(A)$), where $P_{B}:\mathbb R\mapsto [E]$ and
\begin{equation}\label{eqPB0}
P_{B}(t):=U(t,B)P(B)U^{-1}(t,B) \ (\forall \ t\in\mathbb R).
\end{equation}

Note that the map $h$ possesses the following properties:
\begin{enumerate}
\item $h(A)=P_{A}$; \item $h(B^{s})=P_{B}^{s}$ for any
$s\in\mathbb R$, where $P^{s}_{B}$ is the $s$-translation of
$P_{B}$, i.e.,
\begin{equation}\label{eqPB1}
P^{s}_{B}(t)=P_{B}(t+s)
\end{equation}
for any $t\in\mathbb R$; \item $h$ is continuous.
\end{enumerate}
In fact. The first statement is evident. To establish the second
statement we note that
\begin{eqnarray}\label{eqPB2}
& P_{B^{s}}(t)=U(t,B^s)P(B^s)U^{-1}(t,B^s)=\nonumber \\
& U(t,B^s)U(s,B)P(B)U^{-1}(s,B)U^{-1}(t,B^s)=\nonumber
\\
& U(t+s,B)P(B)U^{-1}(t+s,B)=P_{B}(t+s)
\end{eqnarray}
for any $t,s\in\mathbb R$. The second statement follows from
(\ref{eqPB0})-(\ref{eqPB2}).

Let now $B\in H(A)$ be an arbitrary point and $\{B_n\}\subset
H(A)$ such that $B_n\to B$ as $n\to \infty$ (in the topology of
the space $C(\mathbb R,[E])$), then $h(B_n)=P_{B_n}$. Since
$P_{B_n}(t)=U(t,B_n)P(B_n)U^{1}(t,B_n)$, then $h(B_n)\to h(B)$ as
$n\to \infty$, because $P(B_n)\to P(B)$ (see the proof of the
Theorem \ref{thDP1}) and conform Lemma \ref{l13.2.1} we have
$U(t,B_n)\to U(t,B)$ (respectively, $U^{-1}(t,B_n)\to
U^{-1}(t,B)$) in the topology of the space $C(\mathbb R,[E])$.

Under the conditions of Theorem the point $A$ (of the shift
dynamical system $(C(\mathbb R,[E]),\mathbb R,\sigma$)) is stable
in the sense of Lagrange and the point $P_{A}$ (of the shift
dynamical system $(C(\mathbb R,[E]),\mathbb R,\sigma$) is
uniformly comparable with $A$ by character of recurrence. Now to
finish the proof of Theorem it is sufficient to apply Remark
\ref{r4.1*}.
\end{proof}

\begin{coro}\label{corBP1} Under the condition of Theorem \ref{thDP2},
if $A$ is almost periodic (respectively, almost automorphic), then
the operator-function $P$ is also almost periodic (respectively,
almost automorphic) and its frequency module is contained in the
frequency module of $A$.
\end{coro}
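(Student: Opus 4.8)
The first assertion, namely that $P$ is almost periodic (respectively, almost automorphic) whenever $A$ is, is an immediate special case of Theorem \ref{thDP2}. The only genuinely new content of the corollary is the containment of frequency modules, and the plan is to extract it from the comparability that has already been produced inside the proof of Theorem \ref{thDP2}.

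The starting point is the homomorphism $h\colon H(A)\to H(P)$ built in that proof, which is continuous and satisfies $h(A)=P$ together with $h(B^{s})=P_{B}^{s}$ for all $B\in H(A)$ and $s\in\mathbb R$; equivalently, the point $P=P_{A}$ is uniformly comparable with $A$ by the character of recurrence. First I would record the consequence $\mathfrak N_{A}\subseteq\mathfrak N_{P}$: if $\{t_n\}\in\mathfrak N_A$ then $A^{t_n}\to A$, whence $P^{t_n}=h(A^{t_n})\to h(A)=P$ by continuity of $h$, so that $\{t_n\}\in\mathfrak N_P$. Since $A$ is Bohr almost periodic its hull $H(A)$ is compact and on $H(A)$ the compact-open topology coincides with the topology of uniform convergence on $\mathbb R$; by Theorem \ref{thDP2} the same holds for $H(P)$. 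Consequently the inclusion just obtained may be read in the uniform sense, i.e. $\mathfrak N_A^{u}\subseteq\mathfrak N_P^{u}$ (in the almost automorphic case one argues through the associated Bohr almost periodic factor, using Lemma \ref{l4.5*} and Corollary \ref{cor4.6*} to pass between the Levitan almost periodic data and its factor).

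The final step invokes the classical frequency-module containment theorem for almost periodic (respectively, almost automorphic) functions (see \cite{Lev-Zhi}, and for the finite-dimensional origin \cite{Fin}): for two such functions $f$ and $g$ one has $\mathrm{Mod}(g)\subseteq\mathrm{Mod}(f)$ exactly when every sequence of translations returning $f$ uniformly to itself also returns $g$ to itself, that is, when $\mathfrak N_f^{u}\subseteq\mathfrak N_g^{u}$. Applying this with $f=A$ and $g=P$ yields $\mathrm{Mod}(P)\subseteq\mathrm{Mod}(A)$, which is precisely the asserted inclusion of frequency modules.

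The main obstacle I anticipate is not the dynamical input, which is furnished essentially verbatim by Theorem \ref{thDP2}, but the careful bookkeeping of topologies: the abstract comparability delivers $\mathfrak N_A\subseteq\mathfrak N_P$ only in the compact-open topology of $C(\mathbb R,[E])$, whereas the module theorem is phrased in terms of uniform returning of translates. Verifying that, for Bohr almost periodic (and, in the almost automorphic case, for the underlying Bohr factor of the Levitan almost periodic) functions, these two notions of returning coincide on the compact hull is the delicate point. Once this equivalence is in place the module inclusion follows at once, and the containment of frequency modules is established.
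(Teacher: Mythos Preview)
Your proposal is correct and follows essentially the same path the paper has in mind: the almost periodicity (respectively, almost automorphy) of $P$ is a special case of Theorem \ref{thDP2}, and the frequency-module inclusion is the classical consequence of the comparability produced there together with the module-containment criterion from \cite{Lev-Zhi,Fin}. One minor simplification: you rederive $\mathfrak N_{A}\subseteq\mathfrak N_{P}$ from the homomorphism $h$, but Theorem \ref{thDP2} already records the stronger inclusion $\mathfrak M_{A}\subseteq\mathfrak M_{P}$, from which $\mathfrak N_{A}\subseteq\mathfrak N_{P}$ (and hence, on the compact hull, $\mathfrak N_{A}^{u}\subseteq\mathfrak N_{P}^{u}$) follows immediately; you could cite that conclusion directly and shorten the argument.
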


\begin{remark} Note that for finite-dimensional equations ($\dim E
<\infty$), if
\begin{enumerate}
\item[-] the operator-function $A\in C(\mathbb R.[E])$ is almost
periodic, then Corollary \ref{corBP1} is well known (see
\cite{Cop_1967} and \cite{Cop_1978}); \item[-] the
operator-function $A\in C(\mathbb R.[E])$ is almost automorphic,
then Corollary \ref{corBP1} was proved in the work
\cite{Lin_1987}.
\end{enumerate}
\end{remark}

\subsection{Linear non-homogeneous (affine) dynamical systems}
Let $\langle E, \varphi, (Y,\mathbb S,$ $\sigma)\rangle$ be a
linear cocycle over dynamical system $(Y,\mathbb R,\sigma)$ with
the fiber $E$, $f\in C(Y,\mathbb B)$ and $\psi$ be a mapping from
$\mathbb T\times E \times Y$ into $E$ defined by equality
\begin{equation}\label{eqLS5}
\psi(t,u,y):=U(t,y)u+\int_{0}^{t}U(t-\tau,\sigma(\tau,y))f(\sigma(\tau,y))d\tau
\ \ \mbox{if}\ \mathbb S=\mathbb R
\end{equation}
and
\begin{equation}\label{eqLS6}
\psi(t,u,y):=U(t,y)u+ \sum_{\tau
=0}^{t}U(t-\tau,\sigma(\tau,y))f(\sigma(\tau,y)) \ \ \mbox{if}\
\mathbb S=\mathbb Z.
\end{equation}

From the definition of the mapping $\psi$ it follows that $\psi$
possesses the following properties:
\begin{enumerate}
\item[1.] $\psi(0,u,y)=u$ for any $(u,y)\in E\times Y$; \item[2.]
$\psi(t+\tau,u,y)=\psi(t,\psi(\tau,u,y),\sigma(\tau,y))$ for any
$t,\tau\in \mathbb T$ and $(u,y)\in E\times Y$; \item[3.] the
mapping $\psi :\mathbb T\times E\times Y\mapsto E$ is continuous;
\item[4.] $\psi (t,\lambda u+\mu v,y)=\lambda \psi(t,u,y)+\mu
\psi(t,v,y)$ for any $t\in\mathbb T$, $u,v\in E$, $y\in Y$ and
$\lambda,\mu\in \mathbb R$ (or $\mathbb C$) with condition
$\lambda +\mu =1$, i.e., the mapping $\psi(t,\cdot,y):E\mapsto E$
is affine for every $(t,y)\in \mathbb T\times Y$.
\end{enumerate}

\begin{definition}\label{defAF1} A triplet $\langle E,\varphi, (Y,\mathbb
S,Y)\rangle$is called an affine (non-homogeneous) cocycle
\index{an affine (non-homogeneous) cocycle} over dynamical system
$(Y,\mathbb T,Y)$ with the fiber $E$, if the $\varphi$ is a
mapping from $\mathbb T\times E\times Y$ into $E$ possessing the
properties 1.-4.
\end{definition}

\begin{remark}\label{remNH1} If we have a linear cocycle $\langle E, \varphi,
(Y,\mathbb R, \sigma)\rangle$ over dynamical system $(Y,\mathbb
R,\sigma)$ with the fiber $E$ and $f\in C(Y,\mathbb B)$, then by
equality (\ref{eqLS5}) (respectively, by (\ref{eqLS6})) is defined
an affine cocycle $\langle E, \psi, (Y,\mathbb R, \sigma)\rangle$
over dynamical system $(Y,\mathbb S,\sigma)$ with the fiber $E$
which is called an affine (non-homogeneous) cocycle associated by
linear cocycle $\varphi$ and the function $f\in C(Y,E)$.
\end{remark}

\begin{example}\label{exNH1}  Let $Y$ be a complete metric space,
$(Y,\mathbb R,\sigma)$ be a dynamical system on $Y$ and $ \Lambda
$ be some complete metric space of linear closed operators acting
into Banach space $ E $ and $f\in C(Y,E)$. Consider the following
linear non-homogeneous differential equation
\begin{equation}\label{eqNH1}
x'=A(\sigma(t,y))x +f(\sigma(t,y)),\  \ (y\in Y)
\end{equation}
where $A\in C(Y,\Lambda)$. We assume that conditions a. and b.
from Example \ref{exLS1} are fulfilled for equation
(\ref{eqLS01}).

Under the above assumptions equation (\ref{eqLS01}) generates a
linear cocycle $\langle E, \varphi, (Y,\mathbb R, \sigma)\rangle$
over dynamical system $(Y,\mathbb R,\sigma)$ with the fiber $E$.
According to Remark \ref{remNH1} by equality (\ref{eqLS5}) is
defined a linear non-homogeneous cocycle $\langle
E,\psi,(Y,\mathbb R,\sigma)\rangle$ over dynamical system
$(Y,\mathbb R,\sigma)$ with the fiber $E$. Thus every
non-homogeneous linear differential equations (\ref{eqNH1}), under
conditions a. and b. generates a linear non-homogeneous cocycle
$\psi$.
\end{example}

\begin{example}\label{exLS02}{\rm Let $ \Lambda $ be
some complete metric space of linear closed operators acting into
Banach space $ E $ and $f\in C(\mathbb R,E)$. Consider a linear
non-homogeneous differential equation
\begin{equation}\label{eqNH2}
x'=A(t)x +f(t),
\end{equation}
where $A\in C(\mathbb R,\Lambda)$. Along this equation
(\ref{eqNH2}) consider its $H$-class, i.e., the following family
of equations
\begin{equation}\label{eqNH3}
x'=B(t)x +g(t),
\end{equation}
where $(B,g)\in H(A,f)$. We assume that the following conditions
are fulfilled for equation (\ref{eqNH2}) and its $H$-class
(\ref{eqNH3}):
\begin{enumerate}
\item[a.] for any $ u \in E $ and $ B \in H(A) $ equation
(\ref{eqNH3}) has exactly one mild solution $ \varphi (t,u,B)$ and
the condition $ \varphi (0, u, B ) = v ;$ \item[b.] the mapping $
\varphi : (t,u,B ) \to \varphi (t,u,B ) $ is continuous in the
topology of $\mathbb R_{+} \times E \times C(\mathbb R ; \Lambda
)$.
\end{enumerate}

Denote by $(H(A,f),\mathbb R,\sigma)$ the shift dynamical system
on $H(A,f)$. Under the above assumptions the equation
(\ref{eqNH2}) generates a linear cocycle $\langle E, \varphi,
(H(A,f),\mathbb R, \sigma)\rangle$ over dynamical system
$(H(A,f),\mathbb R,\sigma)$ with the fiber $E$. Denote by $\psi$ a
mapping from $\mathbb R_{+}\times E\times H(A,f)$ into $E$ defined
by equality
\begin{equation}\label{eqNH4}
\psi(t,u,(B,g)):=U(t,B)u+\int_{0}^{t}U(t-\tau,B^{\tau})g(\tau)d\tau,\nonumber
\end{equation}
then $\psi$ possesses the following properties:
\begin{enumerate}
\item[(i)] $\psi(0,u,(B,g))=u$ for any $(u,(B,g))\in E\times
H(A,f)$; \item[(ii)]
$\psi(t+\tau,u,(B,g))=\psi(t,\psi(\tau,u,(B,g)),(B^{\tau},g^{\tau}))$
for any $t,\tau\in \mathbb T$ and $(u,(B,g))\in E\times H(A,f)$;
\item[(iii)] the mapping $\psi :\mathbb T\times E\times
H(A,f)\mapsto E$ is continuous; \item[(iv)] $\psi (t,\lambda u+\mu
v,(B,g))=\lambda \psi(t,u,(B,g))+\mu \psi(t,v,(B,g))$ for any
$t\in\mathbb T$, $u,v\in E$, $(B,g)\in H(A,f)$ and $\lambda,\mu\in
\mathbb R$ (or $\mathbb C$) with condition $\lambda +\mu =1$,
i.e., the mapping $\psi(t,\cdot,(B,g)):E\mapsto E$ is affine for
every $(t,(B,g))\in \mathbb T\times H(A,f)$.
\end{enumerate}

Thus, every linear non-homogeneous differential equation of the
form (\ref{eqNH2}) (and its $H$-class (\ref{eqNH3})) generates a
linear non-homogeneous cocycle $\langle E, \psi, (H(A,f),\mathbb
R,\sigma)\rangle$ over dynamical system $(H(A,f),\mathbb
R,\sigma)$ with the fiber $E$.}
\end{example}

\begin{remark}\label{remED1}  1. If $\Lambda =[E]$, $A\in C(\mathbb R,[E])$ and
, then according to Lemma \ref{l13.2.1}  conditions a. and b. in
Example are fulfilled. Thus equation (\ref{eqNH2}) with operator
function $A\in C(\mathbb R,[E])$ generates a linear
non-homogeneous cocycle $\psi$.

2. A closed linear operator $A: D(A)\to E$ with dense domain
$D(A)$ is said \cite{Hen} to be \emph{sectorial}\index{sectorial
operator} if one can find a $\phi\in (0, \frac{\pi}{2}),$ an $M\ge
1,$ and a real number $a$ such that the sector $$ S_{a,\phi} :=
\{\lambda \ |\ |\mbox{arg}\ (\lambda-a)| \le\pi , \lambda \not=a\}
$$ lies in the resolvent set $\rho(A)$ of $A$ and $\Vert (\lambda
I -A)^{-1}\Vert \le M |\lambda - a|^{-1}$ for any $\lambda \in
S_{a,\phi}.$ An important class of sectorial operators is formed
by elliptic operators \cite{Hen}, \cite{Ize}.

Consider the differential equation
\begin{equation}\label{eq13.2.14}
u' = (A_1 + A_2(t))u,
\end{equation}
where $A_1$ is a sectorial operator that does not depend on $t\in
\mathbb R,$ and $A_2\in C(\mathbb R, [E]).$

The results of \cite{Hen,Lev-Zhi},  imply that equation
(\ref{eq13.2.14}) satisfies conditions (i)-(iii).
\end{remark}

Note that equation (\ref{eqNH2}) (and its $H$-class (\ref{eqNH3}))
can be written in the form (\ref{eqLS01}). In fact. We put
$Y:=H(A,g)$ and denote by $\mathcal A \in C(H(A,f),\Lambda)$
(respectively, $\mathcal f\in C(H(A,f),E)$) defined by equality
$\mathcal A(B,g):=B(0)$ (respectively, $\mathcal f(B,g)=g(0)$) for
any $(B,g)\in H(A,f)$, then $B(\tau)=\mathcal
A(B^{\tau},g^{\tau})$ (respectively, $g(\tau)=\mathcal
f(B^{\tau},g^{\tau})$), where $B^{\tau}(t):=B(t+\tau)$ and
$g^{\tau}(t):=g(t+\tau)$ for any $t\in\mathbb R$). Thus the
equation (\ref{eqNH2}) with its $H$-class can be rewrite as follow
\begin{equation}\label{eqLS04}
x'=\mathcal A(\sigma(t,B))x +\mathcal F(\sigma(t,B)).  \ (B,g)\in
H(A,f))\nonumber
\end{equation}

\section{Linear Stochastic Differential Equations}

Consider the linear nonhomogeneous equation
\begin{equation}\label{eqLN1}
\dot{x}=A(t)x+f(t)\nonumber
\end{equation}
on the Banach space $E$, where $f\in C(\mathbb R,E)$ and $A(t)$
generates a cocycle $\langle E,$ $\varphi,$ $(H(A,f),$ $\mathbb
R,$ $\sigma)\rangle$ (or shortly $\varphi$) with fiber $E$ and
base (driving system) $(H(A,f),$ $\mathbb R,$ $\sigma)$.

Denote by $C_{b}(\mathbb R,E)$ the Banach space of all continuous
and bounded mappings $\varphi :\mathbb R\to E$ equipped with the
norm $||\varphi||_{\infty}:=\sup\{|\varphi(t)|:\ t\in\mathbb R\}$.

Let $(H,|\cdot|)$ be a real separable Hilbert space, $(\Omega ,
\mathcal F,\mathbb P)$ be a probability space, and $L^{2}(\mathbb
P,H)$ be the space of $H$-valued random variables $x$ such that
\begin{equation*}\label{eqLSDE03}
\mathbb E|x|^2 :=\int\limits_{\Omega}|x|^2 d\mathbb P<\infty .
\end{equation*}
Then $L^2(\mathbb P,H)$ is a Hilbert space equipped with the norm
\begin{equation*}\label{eqLSDE04}
||x||_2:=\Big{(}\int\limits_{\Omega}|x|^2  d\mathbb P\Big{)}^{1/2}.
\end{equation*}
For $f\in C_b(\mathbb R, L^2(\mathbb P,H))$, the space of bounded
continuous mappings from $\mathbb R$ to $L^2(\mathbb P,H)$, we
denote $||f||_{\infty}:=\sup\limits_{t\in\mathbb R}||f(t)||_2$.

Consider the following linear stochastic differential equation
\begin{equation}\label{eqSDE}
dx(t)=(A(t)x(t)+f(t)dt+g(t)dW(t), 
\end{equation}
where $f,g\in C(\mathbb R,H)$, $A(t)$ is an generator of cocycle
$\varphi$ and $W(t)$ is a two-sided standard one-dimensional
Brownian motion defined on the probability space $(\Omega,\mathcal
F,\mathbb P)$. We set $\mathcal F_{t}:=\sigma\{W(u):  u \le t\}$.

Recall that an $\mathcal F_{t}$-adapted processes
$\{x(t)\}_{t\in\mathbb R}$ is said to be a mild solution of
equation (\ref{eqSDE}) defined on interval $I=(a,b)\subset \mathbb
R$ if it satisfies the stochastic integral equation
$$
x(t)=U(t-s,A^{s})x(s)+\int_{s}^{t}U(t-\tau,A^{\tau})f(\tau)ds+\int_{s}^{t}U(t-\tau,A^{\tau})g(\tau)dW(\tau)
,
$$
for any $t\ge s$ and each $t,s\in I$ ($t\ge s$).

Let $\mathcal P(H)$ be the space of all Borel probability measures
on $H$ endowed with the $\beta$ metric:
$$
\beta (\mu,\nu) :=\sup\left\{ \left| \int f d \mu - \int fd
\nu\right|: ||f||_{BL} \le 1 \right\}, \quad \hbox{for }\mu,\nu\in
\mathcal P(H),
$$
where $f$ are bounded Lipschitz continuous real-valued functions on
$H$ with the norms
\[
||f||_{BL}= Lip(f) + ||f||_\infty,~ Lip(f)=\sup_{x\neq y}
\frac{|f(x)-f(y)|}{|x-y|},~ ||f||_{\infty}=\sup_{x\in H}|f(x)|.
\]
Recall that a sequence $\{\mu_n\}\subset \mathcal P(H)$ is said to
{\em weakly converge} to $\mu$ if $\int f d\mu_n\to \int f d\mu$
for any $f\in C_b(H)$, where $C_b(H)$ is the space of all bounded
continuous real-valued functions on $H$. It is well-known that
$(\mathcal P(H),\beta)$ is a separable complete metric space and
that a sequence $\{\mu_n\}$ weakly converges to $\mu$ if and only
if $\beta(\mu_n, \mu) \to0$ as $n\to\infty$.

\begin{definition}\label{def6.1} A sequence of random variables $\{x_n\}$ is
said to \emph{converge in distribution} to the random variable $x$
if the corresponding laws $\{\mu_n\}$ of $\{x_n\}$ weakly converge
to the law $\mu$ of $x$.
\end{definition}

\begin{example}\label{exLSDE}{\rm Let $ \Lambda $ be
some complete metric space of linear closed operators acting into
Hilbert space $ H $ and $f,g\in C(\mathbb R,H)$. Consider a linear
non-homogeneous stochastic differential equation (\ref{eqSDE}),
where $A\in C(\mathbb R,\Lambda)$. Along this equation
(\ref{eqSDE}) consider its $H$-class, i.e., the following family
of equations
\begin{equation}\label{eqSDE_B}
dx(t)=(\tilde{A}(t)x(t)+\tilde{f}(t))dt+\tilde{g}(t)dW(t), 
\end{equation}
where $(\tilde{A},\tilde{f},\tilde{g})\in
H(A,f,g):=\overline{\{(A^{\tau},f^{\tau}.g^{\tau}):\ \tau\in
\mathbb R\}}$ and by bar is denoted the closer in product space
$C(\mathbb R,\Lambda)\times C(\mathbb R,H)\times C(\mathbb R,H)$.
We assume that the following conditions are fulfilled for equation
\begin{equation}\label{eqSDE_A}
dx(t)=A(t)x(t)dt, 
\end{equation}
and its $H$-class
\begin{equation}\label{eqSDE_B2}
dx(t)=\tilde{A}(t)x(t)dt, 
\end{equation}
where $\tilde{A}\in H(A):=\overline{\{A^{\tau}:\ \tau\in \mathbb
R\}}$ and by bar is denoted the closer in $C(\mathbb R,H)$:
\begin{enumerate}
\item[a.] for any $ u \in H $ and $ \tilde{A} \in H(A) $ equation
(\ref{eqSDE_B}) has exactly one mild solution $ \varphi
(t,u,\tilde{A})$ with the condition $ \varphi (0, u, \tilde{A} ) =
u ;$ \item[b.] the mapping $ \varphi : (t,u,\tilde{A}) \to \varphi
(t,u,\dot{\tilde{A}} ) $ is continuous in the topology of $\mathbb
R_{+} \times H \times C(\mathbb R ; \Lambda )$.
\end{enumerate}

Denote by $(H(A,f,g),\mathbb R,\sigma)$ the shift dynamical system
on $H(A,f,g)$. Under the above assumptions the equation
(\ref{eqSDE_A}) generates a linear cocycle $\langle E, \varphi,
(H(A),\mathbb R, \sigma)\rangle$ over dynamical system
$(H(A),\mathbb R,\sigma)$ with the fiber $H$.

Define the mapping
\begin{equation}\label{eqP0}
\Phi: \mathbb R_{+} \times \mathcal P(H) \times H(A,f,g)  \to
\mathcal P(H),\nonumber
\end{equation}
with $\Phi(t,\mu,(\tilde{A},\tilde f,\tilde g))$ being the law (or
distribution) $\mathcal
L(\varphi(t,x,\tilde{A},\tilde{f},\tilde{g}))$ of the solution $
\varphi(t,x,\tilde{A},\tilde{f},\tilde{g})$ of equation
\begin{equation}\label{tilfg}
dx =(\tilde{A(t)}x+ \tilde f(t)) dt + \tilde g(t) d W, \quad
x(0)=x,
\end{equation}
where $\mathcal L(x)=\mu$ and $\Phi(0,\mu,(\tilde{A},\tilde
f,\tilde g))=\mu$.

We have the following result on $\Phi$:

\begin{theorem}\label{cocycle}\cite{CL_2016},\cite{CL_2016.1}
The mapping $\Phi$ is a continuous cocycle with base (driving
system) $(H(A,f,g),\mathbb R,\sigma)$ and fiber $\mathcal P(H)$,
i.e., the mapping $\Phi: \mathbb R_{+} \times \mathcal P(H) \times
H(A,f,g) \to \mathcal P(H)$ is continuous and satisfies
\begin{equation}\label{cocy}
\Phi(0,\mu, (\tilde{A},\tilde f,\tilde g))=\mu,\quad \Phi(t+\tau,
\mu, (\tilde{A},\tilde f,\tilde g))= \Phi(t,\Phi(\tau,\mu,
(\tilde{A},\tilde f,\tilde g)), (\tilde{A}^{\tau},\tilde
f^\tau,\tilde g^\tau))
\end{equation}
for any $t,\tau\ge 0$, $(\tilde{A},\tilde f, \tilde g)\in
H(A,f,g)$ and $\mu\in\mathcal P(H)$.
\end{theorem}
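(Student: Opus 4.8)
The plan is to exploit the linearity of (\ref{tilfg}), which lets me split every solution into a deterministic-operator part acting on the random initial datum and a noise-driven part that is independent of that datum. Writing $y:=(\tilde A,\tilde f,\tilde g)$ and $U(t,y):=\varphi(t,\cdot,\tilde A)$ for the Cauchy operator of the homogeneous equation (\ref{eqSDE_B2}), the mild solution of (\ref{tilfg}) with $\mathcal L(x)=\mu$ is
\begin{equation}
\varphi(t,x,y)=U(t,y)x+\Psi(t,y),\qquad \Psi(t,y):=\int_0^t U(t-s,\sigma(s,y))\tilde f(s)\,ds+\int_0^t U(t-s,\sigma(s,y))\tilde g(s)\,dW(s).\nonumber
\end{equation}
Since $x$ is $\mathcal F_0$-measurable and the Brownian increments over $[0,t]$ are independent of $\mathcal F_0$, the random variable $\Psi(t,y)$ is independent of $U(t,y)x$, so $\Phi(t,\mu,y)=\mathcal L(\varphi(t,x,y))$ equals the convolution $\big(U(t,y)\big)_*\mu\ast\mathcal L(\Psi(t,y))$, where $\big(U(t,y)\big)_*\mu$ denotes the push-forward of $\mu$ under the bounded linear operator $U(t,y)$. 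In particular $\Phi(t,\mu,y)$ depends on $x$ only through its law $\mu$, so the map is well defined, and $\Phi(0,\mu,y)=\mu$ because $U(0,y)=\mathrm{Id}$ and $\Psi(0,y)=0$.

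For the cocycle identity I would first establish the restart property of the mild solution. Splitting the integrals in $\varphi(t+\tau,x,y)$ at $s=\tau$ and using $U(t+\tau-s,\sigma(s,y))=U(t-(s-\tau),\sigma(s-\tau,\sigma(\tau,y)))$ together with $U(t+\tau,y)=U(t,\sigma(\tau,y))U(\tau,y)$, the portion of the solution over $[\tau,t+\tau]$ is recognised as the mild solution of the $\tau$-shifted equation (with coefficients $\sigma(\tau,y)=y^{\tau}$) started from $\varphi(\tau,x,y)$ and driven by $\widetilde W(s):=W(\tau+s)-W(\tau)$. The crucial stochastic input is that $\widetilde W$ is again a standard Brownian motion, independent of $\mathcal F_{\tau}$ and hence of the $\mathcal F_{\tau}$-measurable random variable $\varphi(\tau,x,y)$, whose law is $\Phi(\tau,\mu,y)$. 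Taking laws, and using once more that the law of the solution depends only on the law of the initial datum, yields $\Phi(t+\tau,\mu,y)=\Phi(t,\Phi(\tau,\mu,y),y^{\tau})$, which is (\ref{cocy}).

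It remains to prove continuity of $\Phi$ on $\mathbb R_{+}\times\mathcal P(H)\times H(A,f,g)$, and I would do this through the convolution representation above by treating the two factors separately. For the push-forward factor, continuity of $y\mapsto U(\cdot,y)$ (the cocycle version of Lemma \ref{l13.2.1}, guaranteed by the regularity assumptions a.--b. of Example \ref{exLSDE}) gives $U(t_n,y_n)\to U(t,y)$ in the operator norm whenever $t_n\to t$ and $y_n\to y$; combined with $\beta(\mu_n,\mu)\to 0$ this forces $\big(U(t_n,y_n)\big)_*\mu_n\to\big(U(t,y)\big)_*\mu$ weakly. For the noise factor it suffices to show $\Psi(t_n,y_n)\to\Psi(t,y)$ in $L^{2}(\mathbb P,H)$; this follows from the explicit formula by the It\^o isometry applied to the stochastic-convolution term and by dominated convergence for the deterministic term, using the joint continuity of $(t,s,y)\mapsto U(t-s,\sigma(s,y))$ and the local boundedness of the coefficients. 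Since $L^{2}$-convergence implies convergence in distribution, $\mathcal L(\Psi(t_n,y_n))\to\mathcal L(\Psi(t,y))$ weakly, and since convolution is jointly continuous for weak convergence, the two limits combine to give $\Phi(t_n,\mu_n,y_n)\to\Phi(t,\mu,y)$ in the $\beta$ metric.

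The main obstacle is the continuity in the base variable $y$: one must control the stochastic convolution $\int_0^t U(t-s,\sigma(s,y))\tilde g(s)\,dW(s)$ uniformly as $y$ varies, which requires a locally uniform operator-norm bound on $U(t-s,\sigma(s,y))$ over compact $t$-intervals together with the joint continuity of the Cauchy operator in $(t,y)$. In the sectorial, infinite-dimensional setting (Example \ref{exLS03} and Remark \ref{remED1}) these estimates follow from analytic-semigroup bounds, but they are the technical heart of the argument; the finite-dimensional case is comparatively routine once Lemma \ref{l13.2.1} is in hand.
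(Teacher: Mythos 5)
Your proposal is correct, but it takes a genuinely different and more self-contained route than the paper. For the cocycle identity the paper never manipulates the mild-solution formula: it sets $\phi(t):=\varphi(t,x,(\tilde{A},\tilde f,\tilde g,W))$, $\psi(t):=\varphi(t,\phi(\tau),(\tilde{A}^{\tau},\tilde f^{\tau},\tilde g^{\tau},\tilde{W}^{\tau}))$ and $\eta(t):=\phi(t+\tau)$, observes that $\psi$ and $\eta$ solve the same shifted equation \eqref{tauE} with the same initial value, and gets the restart identity \eqref{equ} from uniqueness of solutions; your integral-splitting computation proves the same identity directly. The deeper divergence is twofold. First, your convolution representation $\Phi(t,\mu,y)=\bigl(U(t,y)\bigr)_{*}\mu\ast\mathcal L(\Psi(t,y))$ exploits linearity to turn the well-definedness of $\Phi$ (that the law of the solution depends on $x$ only through $\mu$ and not on the particular Brownian motion) into an explicit statement, whereas the paper asserts this somewhat briefly (``the law of solution is uniquely determined by the coefficients and the initial distribution''); the price is that your argument is tied to the affine structure, while the uniqueness argument of the paper would survive verbatim for nonlinear equations. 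Second, for continuity the paper does not give an argument at all---it cites Proposition 3.1 (item c) of Da Prato--Tudor \cite{DT_1995}---while you actually prove it from the decomposition; this is real added value.

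One caveat on your continuity step. The standing hypotheses a.--b. of Example \ref{exLSDE} (and Lemma \ref{l13.2.1}, which concerns the bounded-coefficient case $\Lambda=[E]$) only give joint continuity of $(t,u,y)\mapsto U(t,y)u$, i.e. \emph{strong} continuity of the Cauchy operator in the base variable, not the operator-norm convergence $U(t_n,y_n)\to U(t,y)$ that you invoke for the push-forward factor; for sectorial coefficients this norm convergence is not part of the assumptions. The gap is repairable without norm convergence: strong convergence plus the uniform boundedness principle gives $\sup_n\Vert U(t_n,y_n)\Vert<\infty$, hence uniform convergence on compact subsets of $H$, and tightness of the weakly convergent family $\{\mu_n\}$ (Prokhorov) then yields $\bigl(U(t_n,y_n)\bigr)_{*}\mu_n\to\bigl(U(t,y)\bigr)_{*}\mu$ in the $\beta$ metric; the noise factor is unaffected, since there $U$ is only ever applied to the fixed vectors $\tilde f(s)$, $\tilde g(s)$, so strong continuity already suffices for the It\^o-isometry and dominated-convergence estimates. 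With that adjustment your argument is complete.
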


\begin{proof}
(1) The mapping $\Phi$ satisfies the cocycle property
\eqref{cocy}. Note that $\Phi(0,\mu,$ $ (\tilde{A},\tilde f,\tilde
g))=\mu$ by its definition. Under our assumptions \eqref{tilfg}
admits a unique solution which we denote by
$\varphi(t,x,(\tilde{A},\tilde f,\tilde g, W))$ with
$\varphi(0,x,(\tilde{A},\tilde f,\tilde g,W))=x$. Let
$\tilde{W}^\tau(t)= W(t+\tau) - W(\tau)$. Then $\tilde{W}^\tau$ is
still a Brownian motion which shares the same distribution as that
of $W$.

Define
\begin{equation}\label{eqP1}
\phi(t) := \varphi(t,x,(\tilde{A},\tilde f,\tilde g,W)), \quad
\psi(t):=\varphi(t,\phi(\tau),(\tilde{A}^{\tau},\tilde
f^\tau,\tilde g^\tau, \tilde{W}^\tau)), \quad
\eta(t):=\phi(t+\tau).
\end{equation}
Then $\phi(t)$ is a solution of \eqref{tilfg} with $\phi(0)=x$,
and $\psi(t)$ is a solution of
\begin{equation}\label{tauE}
dx= (\tilde{A}^{\tau}(t)x+\tilde f^\tau(t))dt + \tilde g^\tau (t)
d \tilde{W}^\tau
\end{equation}
with $\psi(0)=\phi(\tau)=\varphi(\tau,x,(\tilde{A},\tilde f,\tilde
g,W))$. On the other hand, note that $\eta(t)$ is also a solution
of \eqref{tauE} with $\eta(0)=\phi(\tau)$. So by the uniqueness of
solutions of \eqref{tauE}, we get $\eta(t)=\psi(t)$. That is
\begin{equation}\label{equ}
\varphi(t+\tau, x, (\tilde{A},\tilde f,\tilde g,W)) =
\varphi(t,\varphi(\tau, x, (\tilde{A},\tilde f,\tilde g, W)),
(\tilde{A}^{\tau},\tilde f^\tau, \tilde g^\tau, \tilde{W}^\tau)).
\end{equation}

Thus for any Brownian motion $\bar W$ and any random variable
$\bar x$ which has the same distribution as that of $x$, the
solution of the equation
\begin{equation}\label{eqP2}
dx= (\tilde{A}(t)x+\tilde f(t))dt + \tilde g(t) d \bar W, \quad
x(0)=\bar x \nonumber
\end{equation}
admits the same law on $H$ as $\phi(t)$ above. In other words, the
law of solution is uniquely determined by the coefficients
$\tilde{A},\tilde f,\tilde g$ and the initial distribution $\mu$.
So
we can simply denote the law of $\varphi(t,x,(\tilde{A},\tilde
f,\tilde g,W))$ by $\Phi(t,\mu, (\tilde{A},\tilde f,\tilde g))$.
Therefore, it follows from \eqref{equ} that the cocycle property
holds for $\Phi$:
\begin{equation}\label{eqP3}
\Phi(t+\tau, \mu, (\tilde{A},\tilde f,\tilde g))=
\Phi(t,\Phi(\tau,\mu, (\tilde{A},\tilde f,\tilde
g)),(\tilde{A}^{\tau},\tilde f^\tau,\tilde g^\tau)).\nonumber
\end{equation}

To prove the continuity of $\Phi$ it is sufficient to apply
Proposal 3.1 (item c.) from \cite{DT_1995}.
\end{proof}
}
\end{example}

\begin{definition}\label{def6} Let $\{\varphi (t)\}_{t\in\mathbb R}$ be a
mild solution of equation \eqref{eqSDE}. Then $\varphi$ is called
$\tau$-periodic (respectively, quasi periodic, Bohr almost
periodic, almost automorphic, recurrent, Levitan almost periodic,
almost recurrent, Poisson stable) {\em in distribution} if the
function $\phi \in C(\mathbb R,\mathcal P(H))$ is $\tau$-periodic
(respectively, quasi periodic, Bohr almost periodic, almost
automorphic, recurrent, Levitan almost periodic, almost recurrent,
Poisson stable), where $\phi(t):=\mathcal L(\varphi(t))$ for any
$t\in\mathbb R$ and $\mathcal L(\varphi(t))\in \mathcal P(H)$ is
the law of random variable $\varphi(t)$.
\end{definition}

\begin{definition}\label{defS} Suppose that linear (homogeneous) equation
\begin{equation}\label{eqLE1}
x'=A(t)x
\end{equation}
generates a (linear) cocycle $\langle E,\varphi, (H(A),\mathbb
R,\sigma)\rangle$. We will say that linear homogeneous equation
(\ref{eqLE1}) (respectively, cocycle $\varphi$ generated by
equation (\ref{eqLE1})):
\begin{enumerate}
\item[-] satisfies condition (S) if for entire relatively compact
on $\mathbb R $ mild solution $\gamma$ of this equation we have
\begin{equation}\label{eqG1}
\lim\limits_{t\to +\infty}|\gamma(t)|=0
\end{equation}
(respectively, equality takes place for any relatively compact on
$\mathbb R$ mild solution of every equation
\begin{equation}\label{eqGB}
x'=B(t)x,
\end{equation}
where $B\in H(A)$);
\item is asymptotically stable if
\begin{equation}\label{eqG2}
\lim\limits_{t\to +\infty}|\varphi(t,x,A)|=0
\end{equation}
for any $x\in E$, where $\varphi(t,x,A):=U(t,A)x$ (respectively,
if equality $\lim\limits_{t\to +\infty}|\varphi(t,x,B)|=0$ takes
place for any $(x,B)\in E\times H(A)$.
\end{enumerate}
\end{definition}

\begin{lemma}\label{remS1} Cocycle $\varphi$ generated by equation (\ref{eqLE1}) satisfies condition
(S )if one of the following conditions are fulfilled:
\begin{enumerate}
\item the set $H(A)$ is compact and the cocycle $\varphi$
generated by equation (\ref{eqLE1}) is asymptotically stable;
\item equation (\ref{eqLE1}) is hyperbolic.
\end{enumerate}
\end{lemma}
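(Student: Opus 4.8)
The plan is to verify condition (S) separately under each of the two hypotheses, in both cases reducing the matter to the behaviour of the cocycle $\varphi$ along an entire (two-sided) mild solution. Throughout I fix $B\in H(A)$ and an entire, relatively compact mild solution $\gamma$ of the equation $x'=B(t)x$; since a relatively compact subset of a metric space is bounded, we automatically have $\sup_{t\in\mathbb R}|\gamma(t)|<\infty$. The key elementary observation, used in both parts, is that such a $\gamma$ is a full trajectory of the cocycle over the point $B$, so that $\gamma(t)=U(t-s,\sigma(s,B))\gamma(s)$ for all $t\ge s$; in particular $\gamma(t)=U(t,B)\gamma(0)$ for every $t\ge 0$.

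For the first case I would argue directly from Definition \ref{defS}. By assumption the cocycle $\varphi$ is asymptotically stable, which in the sense given there for a cocycle means $\lim_{t\to+\infty}|\varphi(t,x,B)|=0$ for every $(x,B)\in E\times H(A)$. Applying this with $x:=\gamma(0)$ and the chosen $B$, and using $\gamma(t)=U(t,B)\gamma(0)=\varphi(t,\gamma(0),B)$, yields immediately $\lim_{t\to+\infty}|\gamma(t)|=0$. Since $B\in H(A)$ and $\gamma$ were arbitrary, condition (S) holds. The compactness of $H(A)$ guarantees that the family of limiting equations ranges over a genuine compact base and that relatively compact entire solutions correspond to precompact trajectories of the associated skew-product system; but the decay itself is furnished by asymptotic stability alone.

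For the second case I would invoke Lemma \ref{lED_10}. By Remark \ref{remUC2}, the statement that equation (\ref{eqLE1}) is hyperbolic (Definition \ref{defUC2}) is equivalent to the statement that the cocycle $\langle E,\varphi,(H(A),\mathbb R,\sigma)\rangle$ is hyperbolic in the sense of Definition \ref{defED1}, with continuous projection-valued maps defined on the whole hull $H(A)$ and the dichotomy estimates (\ref{eqPQ1}) holding uniformly over the base. Our $\gamma$ is then a bounded full trajectory of this hyperbolic cocycle over $y_0:=B$, so Lemma \ref{lED_10} forces $\gamma(\tau)=0$ for all $\tau\in\mathbb R$. Hence $\lim_{t\to+\infty}|\gamma(t)|=0$ trivially, and condition (S) again follows.

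The routine work is thus confined to two pieces of bookkeeping: checking that an entire relatively compact mild solution really is a bounded full trajectory to which Lemma \ref{lED_10} applies, and confirming the reading of \emph{asymptotically stable for the cocycle} as decay over the entire hull. The only place where a genuine difficulty could hide is the latter: if one instead assumed decay only over the single fibre above $A$, then transferring it to an arbitrary $B\in H(A)$ would require more than compactness of $H(A)$ (for instance minimality of the base, so that $A\in H(B)$), and the one-line argument of the first case would break down. Under the cocycle formulation of asymptotic stability adopted here, however, this obstruction does not arise, and both parts reduce to the cited results.
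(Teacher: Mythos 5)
Your proof is correct, and on the hyperbolicity case it is exactly the paper's argument: the paper's entire proof of that half is a citation to Lemma \ref{lED_10}, and you supply the (correct) bookkeeping that makes the citation legitimate — via Remark \ref{remUC2} the hypothesis is hyperbolicity of the cocycle $\langle E,\varphi,(H(A),\mathbb R,\sigma)\rangle$ in the sense of Definition \ref{defED1}, an entire relatively compact mild solution of $x'=B(t)x$ with $B\in H(A)$ is a bounded full trajectory over $y_0=B$, and Lemma \ref{lED_10} forces it to vanish identically, so condition (S) holds trivially. On the asymptotic-stability case, however, you take a genuinely different and more elementary route. The paper disposes of it by citing Theorem 2.37 of \cite[ChII]{Che_2015}, a result about linear systems over a compact base; that citation is where the compactness of $H(A)$ is consumed, and it returns more than condition (S) requires (a uniformity of the decay). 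You instead observe that under the literal cocycle reading of Definition \ref{defS} — namely $|\varphi(t,x,B)|\to 0$ for every $(x,B)\in E\times H(A)$ — condition (S) follows in one line: by uniqueness of mild solutions, any entire solution $\gamma$ of $x'=B(t)x$ satisfies $\gamma(t)=\varphi(t,\gamma(0),B)$ for $t\ge 0$, hence decays. This is sound, uses neither the compactness of $H(A)$ nor even the relative compactness of $\gamma$, and your closing caveat identifies precisely the point at stake: the compactness hypothesis (and the paper's appeal to an external theorem) is what one would need if asymptotic stability were assumed only for the single equation $x'=A(t)x$ and had to be propagated to the whole hull, which is the weaker reading of the hypothesis. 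In short, your version buys a self-contained proof and exposes the compactness assumption as redundant for the implication as literally stated, while the paper's citation covers that weaker reading and yields uniform stability as a by-product.
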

\begin{proof} The first statement follows from the Theorem 2.37
\cite[ChII]{Che_2015}.

The second statement follows from Lemma \ref{lED_10}.
\end{proof}

Let $\varphi \in C(\mathbb R,H)$ be a solution of equation
(\ref{eqSDE}). Denote by
\begin{enumerate}
\item[-] $\mathfrak N_{\varphi}^{d}:=\{\{t_n\}:\ \varphi^{t_n}(t)$
converges in distribution to $\varphi(t)\}$ uniformly with respect
to $t$ on every compact from $\mathbb R$; \item[-] $\mathfrak
M_{\varphi}^{d}:=\{\{t_n\}:\ \varphi^{t_n}(t)$ converges in
distribution $\}$ uniformly with respect to $t$ on every compact
from $\mathbb R$.
\end{enumerate}

\begin{definition}\label{defS1} A solution $\varphi$ is said to be
compatible in distribution if $\mathfrak N_{(A,f,g)}\subseteq
\mathfrak N_{\varphi}^{d}$.
\end{definition}

\begin{theorem}\label{thS1} Suppose that the following conditions are
fulfilled:
\begin{enumerate}
\item[a.] $A\in C(\mathbb R,\Lambda)$ and equation (\ref{eqA})
generates a continuous cocycle $\langle E,\varphi, $ $(H(A),$
$\mathbb R,$ $\sigma)\rangle$ with fiber $E$ over base
$(H(A),\mathbb R,\sigma)$;

 \item[b.] equation (\ref{eqLE1}) satisfies to condition
(S);

\item[c.] the function $(A,f,g)\in C(\mathbb R,\Lambda)\times
C(\mathbb R,H)\times C(\mathbb R,H)$ is Poisson stable;

\item[d.] equation (\ref{eqSDE}) admits a solution $\varphi$
defined on $\mathbb R_{+}$ with precompact rang, i.e., the set
$Q_{+}:=\overline{\varphi(\mathbb R_{+})}$ is compact.
\end{enumerate}

Then equation (\ref{eqSDE}) has a unique solution $p$ defined on
$\mathbb R$ with precompact rang which is compatible.
\end{theorem}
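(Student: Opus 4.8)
The plan is to transfer the whole problem to the deterministic cocycle $\Phi$ on $\mathcal P(H)$ furnished by Theorem \ref{cocycle}, and then to verify the hypotheses of Theorem \ref{t5.1} (in the Poisson stable form of Corollary \ref{cor3.2*}) for the associated skew-product non-autonomous dynamical system. Concretely, I would set $Y:=H(A,f,g)$ with the shift flow $(Y,\mathbb R,\sigma)$, $X:=\mathcal P(H)\times Y$, $\pi:=(\Phi,\sigma)$ and $h:=pr_2$, so that $\langle (X,\mathbb R_{+},\pi),(Y,\mathbb R,\sigma),h\rangle$ is the NDS generated by $\Phi$; put $y_0:=(A,f,g)$ and $x_0:=(\mathcal L(\varphi(0)),y_0)$, where $\varphi$ is the solution from hypothesis (d). The conclusion of the theorem will then be read off by lifting the resulting entire trajectory $\gamma$ in $\mathcal P(H)$ back to a genuine mild solution $p$ on $\mathbb R$.

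The easy verifications come first. For condition (a) of Theorem \ref{t5.1}, note that $\mathcal L:L^2(\mathbb P,H)\to\mathcal P(H)$ is continuous and $Q_{+}=\overline{\varphi(\mathbb R_{+})}$ is compact in $L^2(\mathbb P,H)$ by hypothesis (d); hence $\{\Phi(t,\mathcal L(\varphi(0)),y_0):t\ge0\}=\{\mathcal L(\varphi(t)):t\ge 0\}$ is precompact in $\mathcal P(H)$, which gives that $H^{+}(x_0)$ is conditionally compact. Condition (b) of Theorem \ref{t5.1} is exactly hypothesis (c): $y_0=(A,f,g)$ is Poisson stable. The existence of entire trajectories in $H^{+}(x_0)$ over points of $\omega_{y_0}$ is provided by Lemma \ref{l3.12}.

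The heart of the argument is condition (c) of Theorem \ref{t5.1}. I would take two entire trajectories $\gamma_1,\gamma_2$ of $\pi$ with $\gamma_i(\mathbb R)\subseteq H^{+}(x_0)$ and $h(\gamma_1(0))=h(\gamma_2(0))=b=(\tilde A,\tilde f,\tilde g)\in Y$, realize them as the laws of entire mild solutions $\psi_1,\psi_2$ of the $b$-equation driven by the \emph{same} Brownian motion, and form $u:=\psi_1-\psi_2$. Since the inhomogeneity $\tilde f$ and the noise $\tilde g\,dW$ cancel, $u$ solves the homogeneous equation $u'=\tilde A(t)u$, i.e.\ $u(t)=U(t-s,\sigma(s,b))u(s)$, and its range is relatively compact because those of $\psi_1,\psi_2$ are. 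Condition (S) (hypothesis (b)) then forces $|u(t)|\to0$ as $t\to+\infty$, and the elementary estimate $\beta(\mathcal L(\psi_1(t)),\mathcal L(\psi_2(t)))\le(\mathbb E|u(t)|^2)^{1/2}=\|u(t)\|_2\to 0$ yields $\rho(\gamma_1(t),\gamma_2(t))\to0$, which is precisely the contraction required. Applying Theorem \ref{t5.1} (and Corollary \ref{cor3.2*}, since $y_0$ is Poisson stable) produces a unique entire trajectory $\gamma\in\Phi_{y_0}$ with $\gamma(\mathbb R)\subseteq H^{+}(x_0)$, $h(\gamma(0))=y_0$, and $\mathfrak N_{y_0}\subseteq\mathfrak N_{\gamma}$. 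I would then lift $\gamma$ to a genuine entire mild solution $p$ with $\mathcal L(p(t))=\gamma(t)$ by extracting a limit of the translates $\varphi(\cdot+t_n)$ along $\{t_n\}\in\mathfrak N_{y_0}^{+\infty}$ using the $L^2$-precompactness of $Q_{+}$; precompactness of $\gamma(\mathbb R)$ gives the precompact distribution range of $p$, the inclusion $\mathfrak N_{y_0}\subseteq\mathfrak N_{\gamma}$ becomes $\mathfrak N_{(A,f,g)}\subseteq\mathfrak N_{p}^{d}$ (compatibility in distribution, Definition \ref{defS1}), and uniqueness descends from the uniqueness in Theorem \ref{t5.1}.

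The main obstacle is the third step, namely bridging the deterministic condition (S)---stated for the cocycle on $E=H$---and the distributional level on which $\Phi$ lives. Three technical points must be secured there: (i) that every abstract $\mathcal P(H)$-valued entire trajectory of $\Phi$ lying in $H^{+}(x_0)$ is in fact the law of an honest $L^2$-valued entire mild solution, and that two such can be coupled on one probability space with a common $W$; (ii) that the difference $u$ is a \emph{relatively compact} entire solution of the homogeneous equation, so that condition (S) is genuinely applicable (this is where the relative compactness of the ranges of $\psi_1,\psi_2$, inherited from the conditional compactness of $H^{+}(x_0)$, is essential); and (iii) the passage from $L^2$-decay of $u$ to $\beta$-metric decay of the laws. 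Everything else is a routine translation between the stochastic equation and the skew-product formalism.
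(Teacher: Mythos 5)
Your proposal is correct and takes essentially the same route as the paper: both pass to the measure-valued cocycle $\Phi$ of Theorem \ref{cocycle} and verify the hypotheses of Theorem \ref{t5.1}, the key point being that the difference of two solutions with precompact range solves the homogeneous equation, so condition (S) forces it to decay and hence the laws $\beta$-contract. Two remarks: your direct estimate $\beta(\mathcal L(\psi_1(t)),\mathcal L(\psi_2(t)))\le\|u(t)\|_2$ replaces the paper's contradiction-plus-compactness argument (used together with Remark \ref{remC1}), and in your verification of condition (c) you should take the base point $b$ equal to $y_0=(A,f,g)$ itself --- which is all Theorem \ref{t5.1} requires --- since hypothesis (b) gives condition (S) only for $A$ and not for arbitrary $\tilde A\in H(A)$ (that stronger assumption is what Theorem \ref{thS2} uses).
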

\begin{proof} Under the conditions of Theorem \ref{thS1} equation
(\ref{eqSDE}) generates a continuous cocycle $\langle \mathcal
(H),\Phi, (H(A,f,g),\mathbb R,\sigma)\rangle$ with the fiber
$\mathcal P(H)$ over dynamical system $(H(A,f,g),\mathbb
R,\sigma)$ (see Theorem \ref{cocycle}). Denote by $\mu_0
:=\mathcal L(\varphi(0))$, then $\Phi(\mathbb R_{+},\mu_0,(A,$
$f,$ $g)):=\{\Phi(t,\mu_0,(A,f,g)):\ t\in\mathbb R_{+}\}$ is
precompact in $\mathcal P(H)$. Let $\gamma_1, \gamma_2\in
C(\mathbb R,H)$ be two arbitrary solutions of equation
(\ref{eqSDE}) with precompact ranges. Denote by
$\gamma(t):=\gamma_1(t)-\gamma_2(t)$, then $\gamma \in C(\mathbb
R,H)$ is a solution of equation (\ref{eqA}) with the precompact
range. Since equation (\ref{eqA}) satisfies the condition (S),
then we have equality (\ref{eqG1}). We will show that
\begin{equation}\label{eqF1}
\beta(\mathcal L(\gamma(t_n)),\mathcal L(\gamma_2(t_n)))\to 0
\end{equation}
as $n\to \infty$ for any $\{t_n\}\in\mathfrak N_{y_0}^{+\infty}$.
If we suppose that it is not true, then there exist
$\varepsilon_0>0$ and $\{t_n\}\in \mathfrak N_{y_0}^{+\infty}$
such that
\begin{equation}\label{eqF2}
\beta(\mathcal L(\gamma(t_n)),\mathcal L(\gamma_2(t_n)))\ge
\varepsilon_0
\end{equation}
for any $n\in\mathbb N$. Since the set $\gamma_{i}(\mathbb
R)\subset H$ ($i=1,2$) is precompct, then without loss of
generality we can suppose that the sequences $\{\gamma_{i}(t_n)\}$
($i=1,2$) are convergent. Denote by
$\bar{x}_{i}:=\lim\limits_{n\to \infty}\gamma_{i}(t_n)$. By
equality (\ref{eqF2}) we obtain $\bar{x}_1\not= \bar{x}_2$. Note
that $\gamma(t_n):=\gamma_1(t_n)-\gamma_2(t_n)\to
\bar{x}_1-\bar{x}_2\not= 0$ as $n\to \infty$. The last relation
contradicts to condition (S) (see item b.). The obtained
contradiction proves equality (\ref{eqF1}). To finish the proof it
is sufficient to apply Theorem \ref{t5.1} and Remark \ref{remC1}.
\end{proof}

\begin{coro}\label{cor_1} Under the conditions of Theorem
\ref{thS1} if $(A,f,g)$ is $\tau$-periodic (respectively, Levitan
almost periodic, almost recurrent, Poisson stable), then equation
(\ref{eqSDE}) has a unique solution $p$ defined on $\mathbb R$
which is $\tau$-periodic (respectively, Levitan almost periodic,
almost recurrent, Poisson stable).
\end{coro}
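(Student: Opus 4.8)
The substantive analytic work is already contained in Theorem~\ref{thS1}; the plan is to read off the recurrence of the solution from that of the coefficients via its law function. First I would invoke Theorem~\ref{thS1} to obtain the unique solution $p$ of equation (\ref{eqSDE}) defined on $\mathbb R$ with precompact range which is compatible in distribution, i.e. $\mathfrak N_{(A,f,g)}\subseteq \mathfrak N_{p}^{d}$ in the sense of Definition~\ref{defS1}. Next I would pass to the law function $\phi\in C(\mathbb R,\mathcal P(H))$, $\phi(t):=\mathcal L(p(t))$, which is continuous because $\Phi$ is a continuous cocycle (Theorem~\ref{cocycle}). Unwinding the definition of $\mathfrak N_{p}^{d}$, a sequence $\{t_n\}$ belongs to $\mathfrak N_{p}^{d}$ exactly when $\beta(\phi(t+t_n),\phi(t))\to 0$ uniformly in $t$ on every compact, that is when $\phi^{t_n}\to\phi$ in $C(\mathbb R,\mathcal P(H))$. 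Hence $\mathfrak N_{p}^{d}=\mathfrak N_{\phi}$, and the compatibility becomes the inclusion $\mathfrak N_{(A,f,g)}\subseteq \mathfrak N_{\phi}$ between the corresponding shift dynamical systems.

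With this translation in hand, the conclusion is a matter of transferring each recurrence property across the inclusion $\mathfrak N_{(A,f,g)}\subseteq \mathfrak N_{\phi}$. When $(A,f,g)$ is Levitan almost periodic I would apply Lemma~\ref{l4.5*} (with $y=(A,f,g)$ and $x=\phi$) to conclude that $\phi$ is Levitan almost periodic. When $(A,f,g)$ is $\tau$-periodic, Poisson stable, or almost recurrent, the same inclusion yields the corresponding property for $\phi$ by the classical comparability-of-recurrence results of Shcherbakov quoted in Remark~\ref{r10G2} and employed in Corollary~\ref{cor3.2*}. In each case Definition~\ref{def6} then says precisely that $p$ is $\tau$-periodic (respectively Levitan almost periodic, almost recurrent, Poisson stable) in distribution, while the uniqueness of such a $p$ is already part of Theorem~\ref{thS1}.

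I expect the only delicate point to be bookkeeping rather than any new estimate: one must match each of the four listed classes to a preservation statement for the $\mathfrak N$-inclusion. Levitan almost periodicity is Lemma~\ref{l4.5*}, while $\tau$-periodicity and Poisson stability are covered by item~1 of Remark~\ref{r10G2}; almost recurrence is the case not literally listed there, so I would lean on Corollary~\ref{cor3.2*}, whose list of inherited properties already contains almost recurrence under the comparability hypotheses supplied by Theorem~\ref{t5.1} and Remark~\ref{remC1}. In fact the cleanest route is to bypass the law function altogether and apply Corollary~\ref{cor3.2*} directly to the entire trajectory of the cocycle $\Phi$ corresponding to $p$ --- exactly as the proof of Theorem~\ref{thS1} concludes --- so that all four recurrence classes are handled simultaneously.
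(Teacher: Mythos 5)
Your proposal is correct and takes essentially the same route as the paper: the paper's entire proof reads ``This statement follows from Theorem \ref{thS1} and Corollary \ref{cor3.2*}'', which is exactly the combination you single out in your closing remark as the cleanest path. Your more detailed unwinding --- passing to the law function $\phi(t)=\mathcal L(p(t))$, identifying $\mathfrak N_{p}^{d}$ with $\mathfrak N_{\phi}$, and then invoking Lemma \ref{l4.5*}, Remark \ref{r10G2} and Corollary \ref{cor3.2*} case by case --- is just an explicit expansion of what those two cited results already encapsulate.
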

\begin{proof} This statement follows from Theorem \ref{thS1} and
Corollary \ref{cor3.2*}.
\end{proof}

\begin{theorem}\label{thS1_0} Under the condition of Theorem
\ref{thS1} if we replace condition (S) (see item b.) by asymptotic
stability of equation (\ref{eqA}), then equation (\ref{eqSDE}) has
a unique solution $p$ defined on $\mathbb R$ which is compatible
and $\lim\limits_{t\to +\infty}\beta(\mathcal L(p(t)),\mathcal
L(\varphi(t,x,$ $(A,$ $f,$ $g)))=0$.
\end{theorem}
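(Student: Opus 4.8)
The plan is to reduce everything to the abstract Theorem \ref{t5.1*}, exploiting that asymptotic stability is strictly stronger than condition (S). First I would observe that if $\gamma$ is any entire mild solution of the homogeneous equation (\ref{eqA}) (or of a limiting equation $x'=B(t)x$, $B\in H(A)$), then $\gamma(t)=U(t,A)\gamma(0)$ for $t\ge 0$, so $|\gamma(t)|=|U(t,A)\gamma(0)|\to 0$ by (\ref{eqG2}); hence asymptotic stability implies condition (S). Consequently all the hypotheses a.--d. of Theorem \ref{thS1} are in force, and Theorem \ref{thS1} already yields a unique compatible solution $p$ of (\ref{eqSDE}) on $\mathbb R$ with precompact range. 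It remains only to produce the additional attraction property, and for this I would obtain $p$ from Theorem \ref{t5.1*} rather than \ref{t5.1}, since the conclusion (\ref{eq5.5**}) of \ref{t5.1*} is exactly the limit we want.

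Concretely, I would work with the cocycle $\langle\mathcal P(H),\Phi,(H(A,f,g),\mathbb R,\sigma)\rangle$ of Theorem \ref{cocycle}, its skew-product NDS $\langle(X,\mathbb R_+,\pi),(Y,\mathbb R,\sigma),h\rangle$ with $X=\mathcal P(H)\times H(A,f,g)$, $Y=H(A,f,g)$, $\pi=(\Phi,\sigma)$, and the point $x_0=(\mu_0,(A,f,g))$, where $\mu_0:=\mathcal L(\varphi(0))$ and $y_0:=h(x_0)=(A,f,g)$. Hypothesis a.\ of \ref{t5.1*} (conditional compactness of $H^{+}(x_0)$) follows from d.\ because every fiber of $H^{+}(x_0)$ lies in $\overline{\Phi(\mathbb R_+,\mu_0,(A,f,g))}$, which is compact; hypothesis b.\ is just the Poisson stability of $(A,f,g)$ (condition c.).

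The heart of the matter is verifying hypothesis c.\ of Theorem \ref{t5.1*}, namely
\[
\lim_{t\to+\infty}\beta\big(\Phi(t,\mu_1,y_0),\Phi(t,\mu_2,y_0)\big)=0\qquad(\mu_1,\mu_2\in\mathcal P(H)).
\]
Here I would use the coupling furnished by a single Brownian motion: choosing random variables $\xi_1,\xi_2$ with $\mathcal L(\xi_i)=\mu_i$, independent of $W$, the two mild solutions of (\ref{eqSDE}) issuing from $\xi_1,\xi_2$ differ by the deterministic homogeneous flow,
\[
\varphi(t,\xi_1,(A,f,g,W))-\varphi(t,\xi_2,(A,f,g,W))=U(t,A)(\xi_1-\xi_2),
\]
because the drift term $\int_0^t U(t-\tau,A^\tau)f\,d\tau$ and the stochastic convolution $\int_0^t U(t-\tau,A^\tau)g\,dW$ cancel. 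Since every $f$ with $\|f\|_{BL}\le 1$ satisfies both $\mathrm{Lip}(f)\le 1$ and $\|f\|_\infty\le 1$, this pair of solutions is a coupling giving
\[
\beta\big(\Phi(t,\mu_1,y_0),\Phi(t,\mu_2,y_0)\big)\le \mathbb E\,\min\big(|U(t,A)(\xi_1-\xi_2)|,\,2\big).
\]
By (\ref{eqG2}) the integrand tends to $0$ almost surely and is dominated by the constant $2$, so dominated convergence forces the right-hand side to $0$. This is the step I expect to be the main obstacle, since it is exactly where one upgrades the \emph{pointwise} asymptotic stability to \emph{convergence of laws, uniformly over the whole fiber} $X_{y_0}$, and the bounded--Lipschitz truncation $\min(\cdot,2)$ is what removes any need for moment assumptions on $\mu_1,\mu_2$.

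With a.--c.\ verified, Theorem \ref{t5.1*} delivers a unique entire trajectory $\gamma\in\Phi_{y_0}$ with $\gamma(\mathbb R)\subseteq H^{+}(x_0)$, $h(\gamma(0))=y_0$, comparable with $y_0$ by the character of recurrence, and satisfying $\lim_{t\to+\infty}\rho(\pi(t,x),\gamma(t))=0$ for every $x\in X_{y_0}$. Reading $\gamma(t)=(\mathcal L(p(t)),\sigma(t,y_0))$ back through the cocycle identifies $\gamma$ with the law of the entire mild solution $p$ obtained above; the comparability yields $\mathfrak N_{(A,f,g)}\subseteq\mathfrak N_p^{d}$, i.e.\ $p$ is compatible, and the attraction property becomes precisely $\lim_{t\to+\infty}\beta(\mathcal L(p(t)),\mathcal L(\varphi(t,x,(A,f,g))))=0$ for every initial law, which is the assertion of the theorem. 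Equivalently, this last limit can be read off directly by coupling $p$ and $\varphi(\cdot,x,(A,f,g))$ through the same $W$, so that their difference is again $U(t,A)(p(0)-x)\to 0$, and applying the same bounded--Lipschitz estimate.
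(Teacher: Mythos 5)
Your proposal is correct, and it follows the route the paper intends: reduce to the abstract comparability--with--attraction theorem (Theorem \ref{t5.1*}) applied to the cocycle $\Phi$ of laws over $(H(A,f,g),\mathbb R,\sigma)$, i.e.\ rerun the proof of Theorem \ref{thS1} with Theorem \ref{t5.1} replaced by Theorem \ref{t5.1*}. Note that the paper's one-line proof cites Theorem \ref{t5.2} and Corollary \ref{cor3.2**}, but this is evidently a slip: Theorem \ref{t5.2} requires minimality of $H(y_0)$ and strong Poisson stability, neither of which is assumed in Theorem \ref{thS1_0}; the applicable statements are Theorem \ref{t5.1*} and Corollary \ref{cor3.2_I} (the ones invoked right after, in Corollary \ref{cor_1I}), which are precisely what you use. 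Where you go beyond the paper is in actually verifying hypothesis c.\ of Theorem \ref{t5.1*} for \emph{arbitrary} $\mu_1,\mu_2\in\mathcal P(H)$: the contradiction argument in the proof of Theorem \ref{thS1} relies on precompactness of the ranges of the two solutions being compared and so does not directly cover the whole fiber $X_{y_0}$, whereas your coupling through a common Brownian motion, the elementary bound $\beta(\mathcal L(X),\mathcal L(Y))\le\mathbb E\,\min(|X-Y|,2)$, and dominated convergence upgrade the pointwise asymptotic stability of $U(t,A)$ to fiber-wide convergence of laws with no moment or tightness assumptions. Your final remark --- that the attraction property can also be read off directly by coupling $p$ with $\varphi(\cdot,x,(A,f,g))$ --- is a legitimate shortcut that bypasses the abstract machinery for that part of the conclusion. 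In short, same strategy as the paper, but your write-up supplies the key verification that the paper's sketch omits and corrects its mis-citation.
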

\begin{proof} This statement can be proved using the same
arguments as in the proof of Theorem \ref{thS1} but instead of
Theorem \ref{t5.1} and Corollary \ref{cor3.2**} it is necessary to
apply Theorem \ref{t5.2} and Corollary \ref{cor3.2**}.
\end{proof}

\begin{coro}\label{cor_1I} Under the conditions of Theorem
\ref{thS1_0} if $(A,f,g)$ is $\tau$-periodic (respectively,
Levitan almost periodic, almost recurrent, Poisson stable), then
every solution of equation (\ref{eqSDE}) is asymptotically
$\tau$-periodic (respectively, asymptotically Levitan almost
periodic, asymptotically almost recurrent, asymptotically Poisson
stable) in distribution.
\end{coro}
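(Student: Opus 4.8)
The plan is to read off the statement from Theorem \ref{thS1_0} combined with the recurrence--inheritance results of Section 2, so that essentially no new analysis is required. First I would apply Theorem \ref{thS1_0}: under the stated hypotheses (with asymptotic stability of (\ref{eqA}) in place of condition (S)) equation (\ref{eqSDE}) possesses a unique entire solution $p$ that is compatible in distribution and satisfies $\lim_{t\to+\infty}\beta(\mathcal L(p(t)),\mathcal L(\varphi(t,x,(A,f,g))))=0$. By the cocycle property of $\Phi$ (Theorem \ref{cocycle}), $\mathcal L(\varphi(t,x,(A,f,g)))=\Phi(t,\mathcal L(x),(A,f,g))$ depends only on the initial distribution, so the limit relation gives $\lim_{t\to+\infty}\beta(\mathcal L(q(t)),\mathcal L(p(t)))=0$ for \emph{every} solution $q$ of (\ref{eqSDE}), whatever its initial distribution. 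This is the attraction half of the conclusion.

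Second, I would verify that $p$ itself carries the prescribed recurrence in distribution. Put $\phi(t):=\mathcal L(p(t))$, an element of $C(\mathbb R,\mathcal P(H))$; compatibility in distribution means exactly $\mathfrak N_{(A,f,g)}\subseteq\mathfrak N_{\phi}$ for the Bebutov shift systems, since convergence in distribution uniformly on compacts is the same as convergence in the $\beta$-metric, i.e. in $\mathcal P(H)$. Now I would transfer each recurrence type from $(A,f,g)$ to $\phi$ through this inclusion: if $(A,f,g)$ is $\tau$-periodic then $\{\tau\}\in\mathfrak N_{(A,f,g)}\subseteq\mathfrak N_{\phi}$ forces $\phi^{\tau}=\phi$; the Poisson stable and almost recurrent cases follow from Remark \ref{r10G2} (item 1), and the Levitan almost periodic case from Lemma \ref{l4.5*}. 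By Definition \ref{def6} this says $p$ is $\tau$-periodic (respectively Levitan almost periodic, almost recurrent, Poisson stable) in distribution; alternatively one may quote Corollary \ref{cor3.2**} directly, since $p$ was produced in Theorem \ref{thS1_0} via Theorem \ref{t5.2}.

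Finally I would assemble the two pieces. By the first step every solution $q$ satisfies $\lim_{t\to+\infty}\beta(\mathcal L(q(t)),\mathcal L(p(t)))=0$, and by the second step $p$ is $\tau$-periodic (respectively Levitan almost periodic, almost recurrent, Poisson stable) in distribution; together these are precisely the meaning of the assertion that $q$ is asymptotically $\tau$-periodic (respectively asymptotically Levitan almost periodic, almost recurrent, Poisson stable) in distribution, namely attraction by a solution enjoying the given property.

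The argument is mostly bookkeeping; the only step demanding care is the identification of the distributional class $\mathfrak N_p^{d}$ with the Bebutov class $\mathfrak N_{\phi}$ of $\phi=\mathcal L(p(\cdot))$ in $C(\mathbb R,\mathcal P(H))$, together with the observation that the abstract compatibility lemmas of Section 2 then apply verbatim. Once this translation is in place, the recurrence transfer and the asymptotic conclusion are immediate.
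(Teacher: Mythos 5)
Your proposal is correct and takes essentially the same route as the paper, whose proof is simply the citation of Theorem \ref{thS1_0} together with Corollary \ref{cor3.2_I} --- the latter packaging exactly your two ingredients (the compatible solution inherits the recurrence type of $(A,f,g)$, and it attracts every other solution in the $\beta$-metric), so your argument is just that citation unpacked. Two bookkeeping quibbles: inheritance of almost recurrence under simple compatibility is what Corollary \ref{cor3.2_I} (via Theorem \ref{t5.1*}) asserts, not Remark \ref{r10G2} item 1, which lists recurrent rather than almost recurrent points; and the solution $p$ of Theorem \ref{thS1_0} arises through Theorem \ref{t5.1*}/Corollary \ref{cor3.2_I} (simple compatibility), not Theorem \ref{t5.2}, whose hypotheses (minimality of $H(y_0)$, strong Poisson stability) are not assumed here.
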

\begin{proof} This statement follows from Theorem \ref{thS1_0} and
Corollary \ref{cor3.2_I}.
\end{proof}

\begin{definition}\label{defS2} A solution $\varphi$ is said to be
strongly compatible in distribution if $\mathfrak
M_{(A,f,g)}\subseteq \mathfrak M_{\varphi}^{d}$.
\end{definition}

\begin{theorem}\label{thS2} Suppose that the following conditions are
fulfilled:
\begin{enumerate}
\item[(i)] for any $B\in H(A)$ equation
\begin{equation}\label{eqLE2}
x'=B(t)x
\end{equation}
satisfies to condition (S); \item \item[(ii)] the set
$H(A,f,g)\subset C(\mathbb R,\Lambda)\times C(\mathbb R,H)\times
C(\mathbb R,H)$ is minimal; \item[(iii)] the function $(A,f,g)\in
C(\mathbb R,\Lambda)\times C(\mathbb R,H)\times C(\mathbb R,H)$ is
strongly Pois\-son stable; \item[(iv)] equation (\ref{eqSDE})
admits a solution $\varphi$ defined on $\mathbb R_{+}$ with
precompact rang.
\end{enumerate}

Then equation (\ref{eqSDE}) has a unique solution $p$ defined on
$\mathbb R$ with precompact rang which is strongly compatible in
distribution.
\end{theorem}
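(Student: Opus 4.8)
The plan is to deduce the statement from the abstract strong-comparability result Theorem \ref{t5.2}, in the relaxed form supplied by Remark \ref{remC2}, applied to the distribution cocycle attached to equation (\ref{eqSDE}). First I would use Theorem \ref{cocycle} to pass from (\ref{eqSDE}) to the continuous cocycle $\langle \mathcal P(H),\Phi,(H(A,f,g),\mathbb R,\sigma)\rangle$ and to the associated non-autonomous dynamical system $\langle (X,\mathbb R_{+},\pi),(Y,\mathbb R,\sigma),h\rangle$ with $X:=\mathcal P(H)\times H(A,f,g)$, $Y:=H(A,f,g)$, $\pi:=(\Phi,\sigma)$ and $h:=pr_2$. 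I put $y_0:=(A,f,g)$, $\mu_0:=\mathcal L(\varphi(0))$ (with $\varphi$ the solution furnished by (iv)), and $x_0:=(\mu_0,y_0)$; the solution $p$ to be produced will be the one whose law is the entire trajectory delivered by Theorem \ref{t5.2}.

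Next I would verify the hypotheses of Theorem \ref{t5.2}. Condition (a) follows from (iv): since the law map $L^{2}(\mathbb P,H)\to\mathcal P(H)$ is continuous, $\{\Phi(t,\mu_0,y_0):t\ge 0\}=\{\mathcal L(\varphi(t)):t\ge 0\}$ is precompact in $\mathcal P(H)$, so $H^{+}(x_0)$ lies in $\overline{\{\mathcal L(\varphi(t)):t\ge 0\}}\times H(A,f,g)$ and is therefore conditionally compact. Condition (b) is hypothesis (iii), and condition (c) is hypothesis (ii), since $H(y_0)=H(A,f,g)$.

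The main step — and the main obstacle — is to check the asymptotic condition (\ref{eqC2}) of Remark \ref{remC2}: for every $y=(B,\tilde f,\tilde g)\in H(A,f,g)$, all $\gamma_1,\gamma_2\in\Phi_{y}$, and all $\{t_n\}\in\mathfrak N_{y}^{+\infty}$, one has $\beta(\gamma_1(t_n),\gamma_2(t_n))\to 0$. Here I would lift $\gamma_1,\gamma_2$ to entire $L^{2}$-valued mild solutions $u_1,u_2$ of the limiting stochastic equation with coefficients $(B,\tilde f,\tilde g)$, realized on a single probability space with a common Brownian motion and with precompact ranges — this realization being extracted, via Lemma \ref{l3.12} and the precompactness in (iv), from limits of the shifted original solution. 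Because $u_1$ and $u_2$ share the drift $\tilde f$ and the diffusion $\tilde g$, their difference $v:=u_1-u_2$ is a relatively compact entire mild solution of the homogeneous equation $x'=B(t)x$, in which the stochastic integrals have cancelled; by hypothesis (i) this equation satisfies condition (S), whence $\lim_{t\to+\infty}|v(t)|=0$. Running the contradiction argument of Theorem \ref{thS1}, if (\ref{eqC2}) failed there would be $\varepsilon_0>0$ and $t_n\to+\infty$ with $\beta(\gamma_1(t_n),\gamma_2(t_n))\ge\varepsilon_0$; passing to convergent subsequences $u_i(t_n)\to\bar x_i$ in $L^{2}$ forces $\mathcal L(\bar x_1)\ne\mathcal L(\bar x_2)$, hence $\bar x_1\ne\bar x_2$ and $v(t_n)=u_1(t_n)-u_2(t_n)\to\bar x_1-\bar x_2\ne 0$, contradicting condition (S). The delicate point is precisely the joint lifting of the two distribution trajectories to genuine processes sharing one Brownian motion, so that the stochastic integrals cancel in $v$; and the reason the \emph{strong} hypotheses are needed is that condition (S) must be invoked for every $B\in H(A)$, which is exactly (i), while the uniformity over $y\in H(y_0)$ relies on the minimality (ii).

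Finally, with all hypotheses verified, Theorem \ref{t5.2} (through Remark \ref{remC2}) yields a unique entire trajectory $\gamma$ of the distribution cocycle with $\gamma(\mathbb R)\subseteq H^{+}(x_0)$, $h(\gamma(0))=y_0$, and strongly comparable with $y_0$ by the character of recurrence, i.e.\ $\mathfrak M_{y_0}\subseteq\mathfrak M_{\gamma}$. This $\gamma$ is the law of a unique mild solution $p$ of (\ref{eqSDE}) on $\mathbb R$ with precompact range, and the inclusion $\mathfrak M_{(A,f,g)}=\mathfrak M_{y_0}\subseteq\mathfrak M_{\gamma}=\mathfrak M_{p}^{d}$ is exactly the assertion that $p$ is strongly compatible in distribution (Definition \ref{defS2}), completing the proof.
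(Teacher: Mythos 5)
Your proposal follows essentially the same route as the paper's own proof: construct the distribution cocycle $\Phi$ via Theorem \ref{cocycle}, verify the hypotheses of Theorem \ref{t5.2}, reduce the asymptotic condition (\ref{eqC2}) of Remark \ref{remC2} to condition (S) by taking the difference of two solutions of a limiting equation (\ref{tilfg}) so that the stochastic terms cancel and the difference solves the homogeneous equation $x'=\tilde{A}(t)x$, run the same compactness-and-contradiction argument, and conclude with Theorem \ref{t5.2} (the paper also invokes Theorem \ref{thB2}, which underlies \ref{t5.2}). If anything, you are more explicit than the paper on the one delicate point, namely realizing the two measure-valued entire trajectories as genuine processes on a common probability space driven by one Brownian motion, which the paper passes over without comment.
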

\begin{proof} To prove this statement we will use the same ideas
as in the proof of Theorem \ref{thS1}. Note that under the
conditions of Theorem \ref{thS2} equation (\ref{eqSDE}) generates
a continuous cocycle $\langle \mathcal (H),\Phi, (H(A,f,g),\mathbb
R,\sigma)\rangle$ with the fiber $\mathcal P(H)$ over dynamical
system $(H(A,f,g),\mathbb R,\sigma)$ (see Theorem \ref{cocycle}).
Denote by $\mu_0 :=\mathcal L(\varphi(0))$, then $\Phi(\mathbb
R_{+},\mu_0,(A,f,g)):=\{\Phi(t,\mu_0,(A,f,g)):\ t\in\mathbb
R_{+}\}$ is precompact in $\mathcal P(H)$. Let $\gamma_1,
\gamma_2\in C(\mathbb R,H)$ be two arbitrary solutions of some
equation (\ref{tilfg}) (where $(\tilde{A},\tilde{f},\tilde{g})\in
H(A,f,g)$) with precompact ranges. Denote by
$\gamma(t):=\gamma_1(t)-\gamma_2(t)$, then $\gamma \in C(\mathbb
R,H)$ is a solution of equation
\begin{equation}\label{eqtA}
x'=\tilde{A}(t)x
\end{equation}
with the precompact range. Since equation (\ref{eqtA}) satisfies
the condition (S), then we have equality (\ref{eqG1}). We will
show that
\begin{equation}\label{eqF1_1}
\beta(\mathcal L(\gamma(t_n)),\mathcal L(\gamma_2(t_n)))\to 0
\end{equation}
as $n\to \infty$ for any $\gamma_1,\gamma_2\in \Phi_{y}$, $y\in
H(y_0)$ and $\{t_n\}\in\mathfrak N_{y}^{+\infty}$. If we suppose
that it is not true, then there exist $q\in H(y_0)$,
$\varepsilon_0>0$ and $\{t_n\}\in \mathfrak N_{q}^{+\infty}$ such
that
\begin{equation}\label{eqF2_1}
\beta(\mathcal L(\gamma(t_n)),\mathcal L(\gamma_2(t_n)))\ge
\varepsilon_0
\end{equation}
for any $n\in\mathbb N$. Using the same arguments as in the proof
of Theorem \ref{thS1} we obtain a contradiction which proves our
statement. To finish the proof it is sufficient to apply Theorems
\ref{thB2} and \ref{t5.2} (see also Remark \ref{remC2}).
\end{proof}

\begin{coro}\label{cor_11} Under the conditions of Theorem
\ref{thS2} if $(A,f,g)$ is $\tau$-periodic (respectively, quasi
periodic, Bohr almost periodic, almost automorphic, recurrent,
strongly Poisson stable and $H(A,f,g)$ is a minimal set), then
equation (\ref{eqSDE}) has a unique solution $p$ defined on
$\mathbb R$ which is $\tau$-periodic (respectively, quasi
periodic, Bohr almost periodic, almost automorphic, recurrent,
strongly Poisson stable and $H(A,f,g)$ is a minimal set) in
distribution.
\end{coro}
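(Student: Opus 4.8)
The plan is to obtain this corollary as a direct consequence of Theorem \ref{thS2} together with the general principle that strong comparability by the character of recurrence transfers the recurrence type from the base point to the comparable point (Remark \ref{r10G2}, item 2). Since the hypotheses already presuppose the conditions of Theorem \ref{thS2}, I would begin by invoking that theorem to produce the unique solution $p$ defined on $\mathbb R$ with precompact range which is strongly compatible in distribution, that is, $\mathfrak M_{(A,f,g)}\subseteq \mathfrak M_{p}^{d}$. This is exactly the mechanism already used for Corollary \ref{cor3.2**} relative to Theorem \ref{t5.2}, transplanted to the stochastic cocycle $\Phi$ of Theorem \ref{cocycle}.

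Next I would pass from the stochastic formulation to the shift dynamical system on $C(\mathbb R,\mathcal P(H))$. Writing $\phi(t):=\mathcal L(p(t))$, the inclusion $\mathfrak M_{(A,f,g)}\subseteq \mathfrak M_{p}^{d}$ is, by the very definition of $\mathfrak M_{p}^{d}$ (uniform convergence in distribution on compacta), nothing other than the inclusion $\mathfrak M_{(A,f,g)}\subseteq \mathfrak M_{\phi}$ for the function $\phi$ in the complete metric space $C(\mathbb R,\mathcal P(H))$. Because $C(\mathbb R,\Lambda)\times C(\mathbb R,H)\times C(\mathbb R,H)$ and $C(\mathbb R,\mathcal P(H))$ are both complete, Theorem \ref{thC2} then yields that $\phi$ is strongly comparable by the character of recurrence with the point $(A,f,g)$.

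Finally, I would apply Remark \ref{r10G2} (item 2): since $(A,f,g)$ is $\tau$-periodic (respectively, quasi-periodic, Bohr almost periodic, almost automorphic, recurrent, or strongly Poisson stable with $H(A,f,g)$ minimal), and $\phi$ is strongly comparable with it, the point $\phi$ inherits the same recurrence property in $C(\mathbb R,\mathcal P(H))$. By Definition \ref{def6} this is precisely the statement that $p$ is $\tau$-periodic (respectively, the stated type) in distribution, while uniqueness is already furnished by Theorem \ref{thS2}. The argument is essentially bookkeeping, so I do not anticipate a genuine obstacle; the only points needing care are the identification $\mathfrak M_{p}^{d}=\mathfrak M_{\phi}$ and the check that every recurrence class listed in the corollary indeed occurs in the transfer list of Remark \ref{r10G2}, which it does (in particular the quasi-periodic and the ``strongly Poisson stable with minimal hull'' cases are explicitly covered there).
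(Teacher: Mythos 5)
Your proof is correct and follows essentially the same route as the paper: the paper's one-line proof invokes Theorem \ref{thS2} together with Corollary \ref{cor3.2**} (itself proved via Theorem \ref{t5.2} and Remark \ref{r10G2}, item 2), which is exactly the machinery you unpack --- strong compatibility in distribution, the identification $\mathfrak M_{p}^{d}=\mathfrak M_{\phi}$ for $\phi(t):=\mathcal L(p(t))$, conversion to strong comparability via Theorem \ref{thC2}, and the recurrence transfer of Remark \ref{r10G2}. Nothing to fix.
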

\begin{proof} This statement follows from Theorem \ref{thS2} and
Corollary \ref{cor3.2**} (see also Corollary \ref{cor10B}).
\end{proof}

\begin{theorem}\label{thS2_0} Under the condition of Theorem
\ref{thS2} if we replace condition (S) (see item (i).) by
asymptotic stability of equation of every equation
(\ref{eqSDE_B2}), then equation (\ref{eqSDE}) has a unique
solution $p$ defined on $\mathbb R$ which is strongly compatible
in distribution and $\lim\limits_{t\to +\infty}\beta(\mathcal
L(p(t)),\mathcal L(\varphi(t,x,(A,f,g)))=0$.
\end{theorem}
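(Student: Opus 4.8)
The plan is to reproduce the architecture of the proof of Theorem \ref{thS2}, but to exploit the full force of asymptotic stability in order to obtain the extra convergence. First I would note that asymptotic stability of every limiting homogeneous equation (\ref{eqSDE_B2}) implies condition (S) for each such equation: if $\gamma$ is an entire, relatively compact solution of $x'=\tilde{A}(t)x$ with $\tilde{A}\in H(A)$, then $\gamma(t)=U(t,\tilde{A})\gamma(0)$, whence $|\gamma(t)|=|U(t,\tilde{A})\gamma(0)|\to 0$ as $t\to+\infty$ and (\ref{eqG1}) holds. Hence all the hypotheses of Theorem \ref{thS2} are in force, and that theorem already furnishes a unique solution $p$ on $\mathbb R$ with precompact range which is strongly compatible in distribution; it remains only to prove the limit $\lim\limits_{t\to+\infty}\beta(\mathcal L(p(t)),\mathcal L(\varphi(t,x,(A,f,g))))=0$.

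As in Theorem \ref{thS2}, equation (\ref{eqSDE}) generates the continuous cocycle $\langle \mathcal P(H),\Phi,(H(A,f,g),\mathbb R,\sigma)\rangle$ (Theorem \ref{cocycle}); set $y_0:=(A,f,g)$, $\mu_0:=\mathcal L(\varphi(0))$ and $x_0:=(\mu_0,y_0)$, so that $H^{+}(x_0)$ is conditionally compact. The decisive new step is the fibre-wise asymptotic stability of $\Phi$:
\begin{equation}\label{eqFibre}
\lim\limits_{t\to+\infty}\beta\big(\Phi(t,\mu_1,y),\Phi(t,\mu_2,y)\big)=0
\end{equation}
for every $y=(\tilde{A},\tilde{f},\tilde{g})\in H(A,f,g)$ and all $\mu_1,\mu_2\in\mathcal P(H)$. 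To prove it I would realize random variables $x_1,x_2$ with $\mathcal L(x_i)=\mu_i$ on a common probability space and drive the two copies of (\ref{tilfg}) by the same Brownian motion $W$; since the mild solutions of the affine equation differ only through their homogeneous part, $\varphi(t,x_1,y)-\varphi(t,x_2,y)=U(t,\tilde{A})(x_1-x_2)$. For any bounded Lipschitz $f$ with $||f||_{BL}\le 1$ the elementary inequality $|f(a)-f(b)|\le \min(|a-b|,2)$ gives
\begin{equation}
\big|\mathbb E[f(\varphi(t,x_1,y))]-\mathbb E[f(\varphi(t,x_2,y))]\big|\le \mathbb E\big[\min(|U(t,\tilde{A})(x_1-x_2)|,2)\big], \nonumber
\end{equation}
and taking the supremum over such $f$ bounds $\beta(\Phi(t,\mu_1,y),\Phi(t,\mu_2,y))$ by the same expectation. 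Asymptotic stability yields $U(t,\tilde{A})z\to 0$ for each fixed $z\in H$, so the integrand tends to $0$ almost surely and is dominated by the constant $2$; dominated convergence then gives (\ref{eqFibre}).

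With the fibre-wise stability established, the skew-product system $\pi=(\Phi,\sigma)$ satisfies $\lim\limits_{t\to+\infty}\rho(\pi(t,p_1),\pi(t,p_2))=0$ for all $p_1,p_2\in H^{+}(x_0)$ with $h(p_1)=h(p_2)$, where $\rho$ is the product metric on $\mathcal P(H)\times H(A,f,g)$. Together with the minimality of $H(A,f,g)$ and the strong Poisson stability of $y_0$, this places us exactly under the hypotheses of Corollary \ref{cor3.2***} (whose proof rests on Theorem \ref{t5.2} and Corollary \ref{cor3.2**}). Applying it produces a unique full trajectory $\gamma\in\Phi_{y_0}$, strongly comparable with $y_0$ by the character of recurrence, such that $\lim\limits_{t\to+\infty}\rho(\pi(t,x),\gamma(t))=0$ for all $x\in X_{y_0}$; the solution $p$ of (\ref{eqSDE}) corresponding to $\gamma$ is the one asserted, and reading the last limit on the $\mathcal P(H)$-fibre gives precisely $\lim\limits_{t\to+\infty}\beta(\mathcal L(p(t)),\mathcal L(\varphi(t,x,(A,f,g))))=0$. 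The main obstacle is the fibre-wise stability (\ref{eqFibre}): one must upgrade the pointwise-in-$x$ decay of the homogeneous cocycle to decay of the laws in the $\beta$-metric, uniformly over the test functions, and this is exactly where the common coupling of the two equations and the bounded-Lipschitz estimate are indispensable.
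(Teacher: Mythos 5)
Your proposal is correct, and its skeleton coincides with the route the paper intends: the paper's own proof of this theorem is a single sentence deferring to ``the same arguments as in the proof of Theorem \ref{thS2}'' with the stronger abstract statements (Theorem \ref{t5.2} together with Corollaries \ref{cor5.2**} and \ref{cor3.2***}) substituted so as to obtain the attraction property. The genuine difference lies in how the fibre-wise hypothesis is verified, and here you supply something the paper's citation does not literally contain. The proof of Theorem \ref{thS2} obtains $\beta$-convergence only for pairs of solutions with \emph{precompact range}, by contradiction: extract convergent subsequences $\gamma_i(t_n)\to\bar x_i$, observe $\bar x_1\neq\bar x_2$, and contradict condition (S). That argument cannot yield the hypothesis of Corollary \ref{cor3.2***} (or of Theorem \ref{t5.1*}), namely $\rho(\pi(t,x_1),\pi(t,x_2))\to 0$ for \emph{arbitrary} points of a fibre of $\mathcal P(H)\times H(A,f,g)$, since an arbitrary initial law carries no precompactness. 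Your coupling argument --- realize $\mu_1,\mu_2$ on a common probability space, drive both copies of (\ref{tilfg}) by the same Brownian motion so that the affine parts cancel and the difference equals $U(t,\tilde A)(x_1-x_2)$, bound the pairing with any test function $f$, $||f||_{BL}\le 1$, by $\mathbb E\big[\min(|U(t,\tilde A)(x_1-x_2)|,2)\big]$, and finish with dominated convergence --- upgrades pointwise asymptotic stability of the deterministic cocycle to fibre-wise stability of the measure-valued cocycle on all of $\mathcal P(H)$, with no integrability or compactness assumptions on the initial laws. This is exactly the ingredient needed to legitimately invoke Corollary \ref{cor3.2***}, and the paper leaves it implicit; your write-up makes the theorem provable as stated. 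Two small points: your opening reduction (asymptotic stability of each equation (\ref{eqSDE_B2}) implies condition (S) for it, because an entire solution restricted to $\mathbb R_{+}$ equals $U(t,\tilde A)\gamma(0)$) is correct and lets you quote Theorem \ref{thS2} verbatim for existence, uniqueness and strong compatibility; and at the final step it would be cleaner to identify $\gamma$ with the law-trajectory $t\mapsto\mathcal L(p(t))$ of the solution $p$ already produced by Theorem \ref{thS2}, via the uniqueness clause of Theorem \ref{t5.2} (both are entire, strongly comparable trajectories over $y_0$), rather than to speak of ``the solution of (\ref{eqSDE}) corresponding to $\gamma$'', since the abstract machinery by itself does not manufacture a solution process from a measure-valued trajectory.
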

\begin{proof} This statement can be proved using the same
arguments as in the proof of Theorem \ref{thS2} but instead of
Theorem \ref{t5.1} and Corollary \ref{cor3.2**} it is necessary to
apply Theorem \ref{t5.2} and Corollary \ref{cor3.2**}.
\end{proof}

\begin{coro}\label{cor_S2} Under the conditions of Theorem
\ref{thS2_0} if $(A,f,g)$ is $\tau$-periodic (respectively, quasi
periodic, Bohr almost periodic, almost automorphic, recurrent,
strongly Poisson stable and $H(A,f,g)$ is a minimal set), then
every solution of equation (\ref{eqSDE}) is asymptotically
$\tau$-periodic (respectively, quasi periodic, Bohr almost
periodic, almost automorphic, recurrent, strongly Poisson stable
and $H(A,f,g)$ is a minimal set) in distribution.
\end{coro}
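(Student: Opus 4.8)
The plan is to read the statement off from Theorem \ref{thS2_0} together with the recurrence-transfer machinery of Sections 4 and 7, exactly as Corollary \ref{cor_1I} was deduced from Theorem \ref{thS1_0}. First I would invoke Theorem \ref{thS2_0}: under the present hypotheses (asymptotic stability of every homogeneous limiting equation (\ref{eqSDE_B2}), minimality of $H(A,f,g)$, strong Poisson stability of $(A,f,g)$, and the existence of a solution on $\mathbb R_{+}$ with precompact range) it furnishes a unique solution $p$ of (\ref{eqSDE}), defined on all of $\mathbb R$, which is strongly compatible in distribution with $(A,f,g)$ and which satisfies $\lim_{t\to+\infty}\beta(\mathcal L(p(t)),\mathcal L(\varphi(t,x,(A,f,g))))=0$ for every initial value $x$. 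Writing $\phi(t):=\mathcal L(p(t))$, strong compatibility in distribution is by definition the inclusion $\mathfrak M_{(A,f,g)}\subseteq\mathfrak M_{\phi}$, which by Theorem \ref{thC2} means that $\phi$ is strongly comparable by the character of recurrence with the point $(A,f,g)$ in the shift system on $C(\mathbb R,\mathcal P(H))$.

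The second step is the transfer of the recurrence type. By hypothesis the point $(A,f,g)$ is $\tau$-periodic (respectively quasi periodic, Bohr almost periodic, almost automorphic, recurrent, or strongly Poisson stable with minimal hull); since $\phi$ is strongly comparable with it, Remark \ref{r10G2} (item 2) shows that $\phi$ inherits exactly the same property, i.e. by Definition \ref{def6} the solution $p$ is $\tau$-periodic (respectively \dots) in distribution. This is precisely the content of Corollary \ref{cor3.2**} applied to the cocycle $\Phi$ of Theorem \ref{cocycle}, and combined with the attraction property of Theorem \ref{thS2_0} it is a transcription of Corollary \ref{cor3.2***}.

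For the asymptotic assertion, let $\psi$ be an arbitrary mild solution of (\ref{eqSDE}); then $\psi=\varphi(\cdot,\psi(0),(A,f,g))$, so the convergence supplied by Theorem \ref{thS2_0} gives directly $\beta(\mathcal L(\psi(t)),\phi(t))\to 0$ as $t\to+\infty$, and together with the second step this is the definition of $\psi$ being asymptotically $\tau$-periodic (respectively \dots) in distribution. I would also record the self-contained reason for that convergence, since this is where the only genuine work lies: setting $\gamma:=\psi-p$, linearity of (\ref{eqSDE}) cancels the stochastic terms, so $\gamma$ solves the deterministic homogeneous equation (\ref{eqSDE_A}) and $\gamma(t)=U(t,A)\gamma(0)$; asymptotic stability (\ref{eqG2}) gives $|U(t,A)z|\to 0$ for each $z$, Banach--Steinhaus yields $M:=\sup_{t\ge 0}\|U(t,A)\|<\infty$, and dominated convergence then forces $\|\gamma(t)\|_2^2=\mathbb E|U(t,A)\gamma(0)|^2\to 0$. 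Finally, for any $f$ with $\|f\|_{BL}\le 1$ one has $|\int f\,d\mathcal L(\psi(t))-\int f\,d\phi(t)|\le\mathbb E|\gamma(t)|\le\|\gamma(t)\|_2$, whence $\beta(\mathcal L(\psi(t)),\phi(t))\to 0$. The main obstacle is therefore not conceptual but this measure-theoretic passage from operator-level asymptotic stability to $L^2$-decay of $\gamma$ and hence to weak convergence of the laws; once the uniform bound $M<\infty$ is secured, the argument is routine and the corollary follows by assembling Theorem \ref{thS2_0} with Corollaries \ref{cor3.2**} and \ref{cor3.2***}.
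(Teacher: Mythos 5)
Your proposal is correct and follows essentially the same route as the paper, whose entire proof is the citation of Theorem \ref{thS2_0} together with Corollary \ref{cor3.2**}: the theorem supplies the unique strongly compatible solution $p$ and the convergence $\beta(\mathcal L(\psi(t)),\mathcal L(p(t)))\to 0$ for every solution $\psi$, while Corollary \ref{cor3.2**} (via Remark \ref{r10G2}, item 2) transfers the recurrence type of $(A,f,g)$ to $p$ in distribution. Your added self-contained derivation of that convergence (difference $\gamma=\psi-p$ solving the deterministic homogeneous equation, Banach--Steinhaus, dominated convergence, and the $\|\cdot\|_{BL}$ estimate) is sound but redundant, since the convergence is already part of the conclusion of Theorem \ref{thS2_0}.
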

\begin{proof} This statement follows from Theorem \ref{thS2_0} and
Corollary \ref{cor3.2**}.
\end{proof}

\end{document}